\def\bs{\mathbb{s}}
\def\bA{\mathbb{A}}
\def\bC{\mathbb{C}}
\def\bF{\mathbb{F}}
\def\bG{\mathbb{G}}
\def\bI{\mathbb{I}}
\def\bN{\mathbb{N}}
\def\bQ{\mathbb{Q}}
\def\bR{\mathbb{R}}
\def\bW{\mathbb{W}}
\def\bZ{\mathbb{Z}}
\def\cA{\mathcal{A}}
\def\cB{\mathcal{B}}
\def\cE{\mathcal{E}}
\def\cG{\mathcal{G}}
\def\cH{\mathcal{H}}
\def\cL{\mathcal{L}}
\def\cM{\mathcal{M}}
\def\cO{\mathcal{O}}
\def\cR{\mathcal{R}}
\def\cS{\mathcal{S}}
\def\cT{\mathcal{T}}
\def\cU{\mathcal{U}}
\def\fc{\mathfrak{c}}
\def\fh{\mathfrak{h}}
\def\fL{\mathfrak{L}}
\def\fP{\mathfrak{P}}
\def\End{\operatorname{End}}
\def\GL{\operatorname{GL}}
\def\Gal{\operatorname{Gal}}
\def\Hom{\operatorname{Hom}}
\def\Ind{\operatorname{Ind}}
\def\Isom{\operatorname{Isom}}
\def\Lie{\operatorname{Lie}}
\def\Mat{\operatorname{Mat}}
\def\Spec{\operatorname{Spec}}
\def\Sym{\operatorname{Sym}}
\def\det{\operatorname{det}}
\def\diag{\operatorname{diag}}
\def\dim{\operatorname{dim}}
\def\ker{\operatorname{ker}}
\def\mod{\operatorname{mod}}
\def\univ{\operatorname{univ}}
\def\bs{\backslash}
\def\eps{\varepsilon}
\newcommand{\beas}{\begin{eqnarray*}}
\newcommand{\eeas}{\end{eqnarray*}}
\newcommand{\wt}[1]{\widetilde{#1}}
\def\Levi{\mathrm{Levi}}
\def\Levin{H}
\newcommand{\Isoms}{\underline{\Isom}}
\newcommand{\fpb}{\overline{{\mathbb F}}_p}
\newcommand{\ellcan}{\ell_{\mathrm{can}}}
\newcommand{\uo}{\underline{\Omega}}
\newcommand{\et}{\mbox{\it{\'et}}}
\newcommand{\isomto}{\overset{\sim}{\rightarrow}}
\newcommand{\dual}{^\vee}
\newcommand{\IC}{\bC}
\newcommand{\IQ}{\bQ}
\newcommand{\IR}{\bR}
\newcommand{\ZZ}{\bZ}
\newcommand{\cmfield}{K}
\newcommand{\OK}{\mathcal{O}_\cmfield}
\newcommand{\similitude}{\nu}
\newcommand{\cpct}{\mathcal{U}}
\newcommand{\centralizer}{C}
\newcommand{\imquad}{F}
\newcommand{\Ekap}{\cE_{\kappa}}
\newcommand{\EAR}{\cE_{\uA/S}}
\newcommand{\EARp}{\cE_{\uA/R'}}
\newcommand{\lambdaiso}{\ell}
\newcommand{\modulispace}{M}
\newcommand{\uA}{\underline{A}}
\newcommand{\polarization}{\lambda}
\newcommand{\ENDO}{i}
\newcommand{\level}{\alpha}
\newcommand{\PELtuple}{\left(A, \ENDO, \polarization, \level\right)}
\newcommand{\lattice}{\cL}
\newcommand{\reflex}{E}
\newcommand{\Oe}{\cO_\reflex}
\newcommand{\Oep}{\cO_{\reflex,(p)}}
\newcommand{\Auniv}{\cA}
\newcommand{\siga}{{a_+}}
\newcommand{\sigb}{{a_-}}
\newcommand{\sigp}{\siga}
\newcommand{\sigm}{\sigb}
\newcommand{\sigpm}{{a_{\pm}}}
\newcommand{\Sord}{\cS^{\rm ord} }
\newcommand{\Sordprime}{\cS^{'\rm ord} }
\newcommand{\Ring}{\cR}
\newcommand{\Igusa}{\Ig}
\newcommand{\Witt}{\mathbb{W}}
\newcommand{\loc}{{\rm loc}}
\newcommand{\Sh}{{\rm Sh}}
\newcommand{\ST}{Serre-Tate }
\newcommand{\zz}{{\mathbb Z}}
\newcommand{\arch}{\Sigma} 
\newcommand{\archK}{\Sigma_K} 
\newcommand{\HH}{\cH} 
\newcommand{\transp}[1]{{^t#1}} 
\newcommand{\EA}{\cE_{\uA}} 
\newcommand{\EAo}{\cE_{\uA, \rho}} 
\newcommand{\EAoz}[1]{\cE_{\uA_{#1}, \rho}}
\newcommand{\EAz}[1]{\cE_{\uA_{#1}}}
\newcommand{\rep}{\rho} 
\newcommand{\taut}{{\tilde \tau}}
\newcommand{\sigmat}{{\tilde \sigma}}
\newcommand{\ov}{\overline}
\newcommand{\ul}{\underline}
\newcommand{\ra}{\rightarrow}
\newcommand{\<}{\left\langle}
\renewcommand{\>}{\right\rangle}
\newcommand{\toisom}{\buildrel\sim\over\to}
\newcommand{\Mord}{\cM^{\mathrm{ord}}}
\newcommand{\Ig}{\mathfrak{Ig}^\mathrm{ord}}
\newcommand{\Ign}{\mathrm{Ig}^\mathrm{ord}}
\newcommand{\Mbarord}{\overline{\cM}^{\mathrm{ord}}}
\newcommand{\surjects}{\relbar\joinrel\twoheadrightarrow}
\theoremstyle{plain}
\newtheorem{theorem}{Theorem}[section]
\newtheorem{lemma}[theorem]{Lemma}
\newtheorem{corollary}[theorem]{Corollary}
\newtheorem{proposition}[theorem]{Proposition}
\newtheorem{defn}[theorem]{Definition}
\newtheorem{defi}[theorem]{Definition}
\newtheorem{cor}[theorem]{Corollary}
\newtheorem{prop}[theorem]{Proposition}
\newtheorem{definition}[theorem]{Definition}
\theoremstyle{definition}
\newtheorem{remark}[theorem]{Remark}
\title[$p$-adic $q$-expansion principles on unitary Shimura varieties]{$p$-adic $q$-expansion principles on unitary Shimura varieties}
\date{\today}
\author[A. Caraiani]{Ana Caraiani}
\thanks{Ana Caraiani's research is partially supported by NSF Postdoctoral Fellowship DMS-1204465 and NSF Grant DMS-1501064.}
\author[E. Eischen]{Ellen Eischen}
\thanks{Ellen Eischen's research is partially supported by NSF Grant DMS-1249384.}
\author[J. Fintzen]{Jessica Fintzen}
\thanks{Jessica Fintzen's research is partially supported by the Studienstiftung des deutschen Volkes.}
\author[E. Mantovan]{Elena Mantovan}
\thanks{Elena Mantovan's research is partially supported by NSF Grant DMS-1001077.}
\author[I. Varma]{Ila Varma}
\thanks{Ila Varma's research is partially supported by a National Defense Science and Engineering Fellowship.}
\address{Ana Caraiani\\
 Department of Mathematics\\
 Princeton University\\
 Fine Hall, Washington Road\\
 Princeton, NJ 08544-1000\\
 USA}
 \email{caraiani@princeton.edu}
\address{Ellen Eischen\\
Department of Mathematics\\
University of  Oregon\\
Fenton Hall\\
Eugene, OR 97403\\
USA}
\email{eeischen@uoregon.edu}
\address{Jessica Fintzen\\
 Department of Mathematics\\
 Harvard University\\
 One Oxford Street\\
Cambridge, MA 02138\\
 USA}
 \email{fintzen@math.harvard.edu}
\address{Elena Mantovan\\
 Department of Mathematics\\
CalTech\\
Pasadena, CA 91125\\
 USA}
 \email{mantovanelena@gmail.com}
\address{Ila Varma\\
 Department of Mathematics\\
 Princeton University\\
 Fine Hall, Washington Road\\
 Princeton, NJ 08544-1000\\
 USA}
 \email{ivarma@math.princeton.edu}
\begin{document}
\bibliographystyle{amsalpha}

\newpage
\setcounter{page}{1}
\maketitle
\begin{abstract}
We formulate and prove certain vanishing theorems for $p$-adic automorphic forms on unitary groups of arbitrary signature.  The $p$-adic $q$-expansion principle for $p$-adic modular forms on the Igusa tower says that if the coefficients of (sufficiently many of) the $q$-expansions of a $p$-adic modular form $f$ are zero, then $f$ vanishes everywhere on the Igusa tower.  There is no $p$-adic $q$-expansion principle for unitary groups of arbitrary signature in the literature.  By replacing $q$-expansions with Serre-Tate expansions (expansions in terms of Serre-Tate deformation coordinates) and replacing modular forms with automorphic forms on unitary groups of arbitrary signature, we prove an analogue of the $p$-adic $q$-expansion principle.  More precisely, we show that if the coefficients of (sufficiently many of) the Serre-Tate expansions of a $p$-adic automorphic form $f$ on the Igusa tower (over a unitary Shimura variety) are zero, then $f$ vanishes identically on the Igusa tower.

This paper also contains a substantial expository component.  In particular, the expository component serves as a complement to Hida's extensive work on $p$-adic automorphic forms.

\end{abstract}

\tableofcontents

\section{Introduction}
The purpose of this paper is twofold: to provide an expository guide to the theory of $p$-adic automorphic forms on unitary groups and to formulate and prove certain vanishing theorems for these $p$-adic automorphic forms, which are analogous to the $p$-adic $q$-expansion principle for modular forms. 

In the case of modular forms, which are automorphic forms for $GL_2/\mathbb{Q}$, the $q$-expansion principle is important for constructing families of $p$-adic modular forms and for explicitly computing the Hecke operators acting on ($p$-adic) modular forms. In turn, the algebraic $q$-expansion principle relies on the geometric interpretation of ($p$-adic) modular forms and on the underlying geometry of the moduli spaces they live on. 

Automorphic forms for $GL_n$, when $n>2$, do not have a natural interpretation in terms of algebraic geometry, because the locally symmetric spaces of $GL_n$ do not have the structure of algebraic varieties. The locally symmetric spaces for unitary groups, however, do have the structure of Shimura varieties, and their cohomology realizes systems of Hecke eigenvalues coming from $GL_n$ (either directly since unitary groups are outer forms of $GL_n$, or through congruences - via $p$-adic interpolation). This is why unitary groups have been key in trying to extend results in the Langlands program from $GL_2$ to $GL_n$ in recent years~\cite{shin, HLTT, scholze}. This is also why unitary groups provide a natural context in which to define and study $p$-adic automorphic forms geometrically.

The first part of our paper discusses unitary Shimura varieties, their moduli interpretation and the geometry of their integral models. This leads to the geometric definition of $p$-adic automorphic forms on unitary groups. This is a vast area of research and many different aspects could be highlighted, but we focus on providing an expository account of H. Hida's extensive work in this area, including \cite{hida, hidairreducibility}. In the second part of our paper, we formulate and prove certain analogues of the $q$-expansion principle in this context. We expect that these vanishing theorems will play a key role in constructing families of $p$-adic automorphic forms on unitary groups of arbitrary signature. We discuss these types of theorems in more depth below.

\subsection{Vanishing theorems}

\subsubsection{$q$-expansion principles for modular (and Hilbert modular) forms}

We start with some overview and motivation. We then review the different incarnations of the $q$-expansion principle for modular forms.  (Note that $q$-expansion principles - and the Serre-Tate expansion principle discussed later in this paper - are instances of the principle of analytic continuation, which says an analytic function on a connected domain is completely determined by its restriction to any non-empty open subset and, in particular, is determined by its Taylor expansion around any point.)

Let $\mathcal{H}=\{z\in \IC|\mathrm{Im}\ z>0\}$ be the complex upper half plane. The upper half plane can be identified with the \emph{symmetric space} for the group $SL_2/\mathbb{Q}$: \[\mathcal{H}\simeq SL_2(\mathbb{R})/SO_2(\mathbb{R})\] and it has a natural action of $SL_2(\mathbb{Z})$ by M\"obius transformations, which is equivariant for this identification. Given a congruence subgroup of $SL_2(\mathbb{Z})$, such as \[\Gamma_1(N):=\left\{g\in SL_2(\mathbb{Z})|g\equiv\begin{pmatrix}1&*\\0&1\end{pmatrix}\pmod{N}\right\},\] we can form the associated \emph{locally symmetric space} \[Y_1(N):=\mathcal{H}/\Gamma_1(N).\] This construction generalizes from $SL_2$ to any reductive group over $\mathbb{Q}$, such as $GL_n$ or a unitary group. Moreover, the Betti cohomology of the associated locally symmetric spaces (thought of simply as real manifolds) can be related to automorphic representations of the reductive group. 

In the case of $SL_2$, something special happens: $Y_1(N)$ is not merely a real manifold, but it has a natural complex structure, inherited from the complex structure on $\cH$. A modular form of weight $k$ and level $N$ is a holomorphic function on $\cH$ satisfying certain symmetries under the action of $\Gamma_1(N)$ by M\"obius transformations: \[f\left(\frac{az+b}{cz+d}\right)=(cz+d)^kf(z) \] and also satisfying a growth condition. Since $\begin{pmatrix}1&1\\0&1\end{pmatrix}\in \Gamma_1(N)$, we have \[f(z+1)=f(z),\] which means that $f$ has a Fourier expansion, which ends up looking like \[f(z)=\sum_{n=0}^\infty a_nq^n,\ \mathrm{where}\ q=e^{2\pi iz}.\] We can think of the $q$-expansion as the Taylor expansion of $f$ around the missing point $z=i\infty$ of $Y_1(N)$. The \emph{analytic $q$-expansion principle} says that the Fourier expansion uniquely determines the modular form $f$.

There is also an algebraic $q$-expansion principle, which comes from the fact that modular forms have an interpretation in terms of algebraic geometry. Again, this is something special for $SL_2$ (and other groups that admit Shimura varieties): the Riemann surface $Y_1(N)$ has a natural moduli interpretation (parametrizing elliptic curves) and therefore comes from an algebraic curve defined over $\mathbb{Q}$. The symmetries that the holomorphic function $f$ is required to satisfy make $f$ into a section of a line bundle $\omega^k$ on this curve $Y_1(N)$. This curve is not projective: it will miss a finite number of cusps, one of which corresponds to the point $i\infty$ on the compactification of $\mathcal{H}/\Gamma_1(N)$ as a Riemann surface. If we call this cusp $\infty$, then the coordinate $q$ in the Fourier expansion of $f$ can be identified with a canonical coordinate in a formal neighborhood of $\infty$. The \emph{algebraic} $q$-expansion of $f$ can be identified with the localization of $f$ at the cusp $\infty$.  (The line bundle $\omega^k$ is also canonically trivialized.) The \emph{algebraic $q$-expansion principle} says that a modular form of weight $k$ is uniquely determined by its $q$-expansion. This principle follows from the fact that a section of a line bundle which vanishes in a formal neighborhood of a point on an irreducible curve must vanish everywhere on that curve.

The $p$-adic interpolation of modular forms is crucial for understanding their connection with Galois representations: for example, constructing Galois representations in weight $1$ by congruences and proving modularity via the Taylor-Wiles patching method. 
This leads to the natural question of how to formalize the notion of $p$-adic interpolation and how to define a $p$-adic modular form.

One option is geometric: it only works for groups that have Shimura varieties and uses the geometry of their integral models. In the case of modular forms, this approach goes back to N. Katz~\cite{ka2,kaCM}: $p$-adic modular forms can be thought of as sections of the trivial line bundle over the \emph{Igusa tower}, which can be constructed over the \emph{ordinary locus}. The Igusa tower has the property that it simultaneously trivializes all the line bundles $\omega^k$, corresponding to different weight modular forms $k$. In a very rough sense, this construction can be thought of as a $p$-adic analogue of the upper half plane $\mathcal{H}$. The advantage of this approach is that the underlying geometry provides more tools for studying $p$-adic modular forms (for example, the \emph{Hasse invariant} is such a tool) and answering questions such as when a $p$-adic modular form is classical. 

The $p$-adic $q$-expansion principle says that a $p$-adic modular form is uniquely determined by its $q$-expansion.  This principle relies on the irreducibility of the Igusa tower.  This principle has been extremely important for further studying $p$-adic modular forms. Applications to the construction of $p$-adic $L$-functions are mentioned in Section~\ref{antapp-section}. The $p$-adic $q$-expansion principle is also a crucial ingredient in the work of Buzzard and Taylor on the icosahedral Artin conjecture~\cite{buzzardtaylor} and in generalizations of this type of argument to Hilbert modular varieties. A key aspect of this application is the fact that, for $GL_2$, $q$-expansions of Hecke eigenforms are closely related to Hecke eigenvalues and, therefore, to Galois representations. This is a connection that is not yet understood for other groups, such as unitary groups.

\subsubsection{Principles for other groups, including unitary groups}

In the case of unitary groups or symplectic groups, which admit Shimura varieties, the story described above largely generalizes. Their locally symmetric spaces have an algebraic structure, admit a moduli interpretation, and have integral models. We describe these models in Section~\ref{shvars-section}. One can define automorphic forms in a way similar to how one defines modular forms, and they have an algebro-geometric interpretation as sections of certain vector bundles. We give more details on this in Section~\ref{classautforms-section}. It is also possible to talk about $p$-adic automorphic forms by constructing a (higher-dimensional) Igusa tower over the ordinary locus. These notions are made precise in Section~\ref{padic-section}. However, it is not clear what the best analogue of the $p$-adic $q$-expansion principle would be, in this level of generality. 

There are $q$-expansion principles, or partial results in this direction, in a number of cases.  For Siegel modular forms, i.e. automorphic forms on symplectic groups, there is an algebraic $q$-expansion principle in \cite{CF}.  By \cite[Corollary 8.17]{hida}, there is a $p$-adic $q$-expansion principle for Siegel modular forms.  By \cite[Proposition 7.1.2.14]{lan}, there is an algebraic $q$-expansion principle for scalar-valued automorphic forms on unitary groups of signature $(a, a)$ for any positive integer $a$.  As mentioned in the last paragraph of \cite{hida}, there is a $p$-adic $q$-expansion principle for automorphic forms on unitary groups of signature $(n,n)$. 
The proofs of all of these $q$-expansion principles rely on the existence of cusps, whose formal neighborhoods have canonical coordinates and on the irreducibility of the underlying moduli space (i.e. a Shimura variety or Igusa tower).

For automorphic forms on unitary groups of signature $(a, b)$ with $a\neq b$, the underlying geometry of the associated moduli spaces prevents the existence of a $q$-expansion principle, because these spaces have no cusps (whose formal neighborhoods have canonical coordinates).  In the case of automorphic forms on unitary groups of signature $(a, b)$, when the corresponding Shimura varieties are non-compact, the usual $q$-expansion is replaced by a Fourier-Jacobi expansion, a generalization of the Fourier expansion, in which the coefficients are themselves functions formed from theta-functions.  Nevertheless, there is an algebraic {\it Fourier-Jacobi Principle} for unitary groups \cite[Proposition 7.1.2.14]{lan}.  (This Fourier-Jacobi Principle gives the algebraic $q$-expansion principle for unitary groups of signature $(a, a)$.)

While it is natural to ask for a ``$p$-adic Fourier-Jacobi expansion principle'' for unitary groups of arbitrary signature, a slightly different - but analogous - principle, a ``Serre-Tate expansion principle'' follows more naturally from the existing literature.  The main result of this paper is the formulation and proof of the Serre-Tate expansion principle (in Theorem \ref{STexpprinciple-thm}).  Algebraic $q$-expansions and algebraic Fourier-Jacobi expansions are expansions of a modular (or automorphic) form at the boundary of the Shimura variety.  On the other hand, a Serre-Tate expansion is the expansion of a modular form at an ordinary CM point.  There is a canonical choice of coordinates for the local ring at the ordinary point; these are called {\it Serre-Tate deformation coordinates}.  Roughly speaking our main result (stated precisely in Theorem \ref{STexpprinciple-thm}) says that given suitable conditions on the prime $p$ (namely, when $p$ splits completely in the reflex field), if $f$ is an automorphic form on a unitary group and for each irreducible component $C$ of the associated Igusa tower, a Serre-Tate expansion of $f$ at some CM point in $C$ is $0$, then $f$ vanishes identically on the Igusa tower.  The proof relies on Hida's description of the geometry of the Igusa tower. The key point is, again, the irreducibility of the Igusa tower.

\subsection{Anticipated applications}\label{antapp-section}

As noted above, the use of $q$-expansion principles in the construction of $p$-adic families of modular forms is well-established.  In \cite{kaCM}, Katz 
used the $q$-expansion principle for Hilbert modular forms to study congruences between values of different Hilbert modular forms, which led to the construction of certain $p$-adic families of Hilbert modular forms.  Similarly, in \cite{apptoSHL}, the second author used the $q$-expansion principle to construct $p$-adic families of automorphic forms on unitary groups of signature $(a, a)$ for all positive integers $a$.  Katz's $p$-adic families of Hilbert modular forms are the main ingredient in his construction of $p$-adic $L$-functions for CM fields \cite{kaCM}.  Analogously, the second author constructed the $p$-adic families of automorphic forms in \cite{apptoSHL} to complete a step in the construction of $p$-adic $L$-functions (for unitary groups) proposed in \cite{HLS}. 

We plan to use the Serre-Tate expansion principle in Theorem \ref{STexpprinciple-thm} analogously to how the $q$-expansion principle is used in contexts in which $q$-expansions exist. More precisely, in a joint paper in preparation, we are using the Serre-Tate expansion principle introduced in this paper to construct $p$-adic families of automorphic forms on unitary groups of signature $(a, b)$ with $a\neq b$.  As explained in \cite{emeasurenondefinite}, the lack of such a principle in the literature was an obstacle faced by the second author in her effort to extend her results on $p$-adic families of automorphic forms to unitary groups of arbitrary signature.  This paper eliminates that obstacle and fills in a hole in the literature.  We also are using the expository portion of this paper as part of the foundation for our construction of these families.

One advantage of expansions around CM points over $q$-expansions is that they can be used for compact as well as non-compact Shimura varieties. The Serre-Tate expansion has been used before by Hida (for example, to define his idempotent in~\cite{hida}) and also appears in work of Brooks~\cite{brooks} (for Shimura curves) and Burungale and Hida~\cite{BH} (for Hilbert modular varieties) with applications to special values of $p$-adic $L$-functions.

In a more speculative direction, we note the potential for applications to homotopy theory.  Certain $p$-adic families of modular forms, studied in terms of their $q$-expansions, were used to defined an invariant (the {\it Witten genus}) in homotopy theory \cite{hopkins94, hopkinsICM, AHR}.  The Witten genus is a $p$-adic modular form valued invariant that occurs in the theory of {\it topological modular forms}.  Recently, there have been attempts to construct an analogue of the Witten genus in the theory of {\it topological automorphic forms}, where there is conjecturally an invariant taking values in the space of $p$-adic automorphic forms on unitary groups of signature $(1, n)$ \cite{beh}.  Vanishing theorems analogous to the $q$-expansion principle will likely play an analogously important role in this context.

\subsection{Structure of the paper}\label{paperstructure-section}

We now provide a brief overview of the paper.  Section \ref{shvars-section} introduces Shimura varieties for unitary groups and the associated moduli problem.  We work with these Shimura varieties throughout most of the paper.  Section \ref{classautforms-section} reviews the theory of classical automorphic forms on unitary groups, from several perspectives.  Section \ref{padic-section} introduces Hida's geometric theory of $p$-adic automorphic forms 
 (i.e. over the ordinary locus).  This section includes details about the Igusa tower, as well as the space of $p$-adic automorphic forms (defined as global sections of the structure sheaf over the Igusa tower). Section \ref{ST-section} covers the main results of this paper, namely the {\it \ST expansion principle}, an analogue of the $q$-expansion principle, for $p$-adic automorphic forms on unitary groups of arbitrary signature.   We are using this result in a paper in preparation that constructs families of $p$-adic automorphic forms on unitary groups of arbitrary signature. Finally, Section~\ref{section on pullbacks} discusses how Serre-Tate expansions behave with respect to pullbacks, as an example of the kind of application we have in mind to computational aspects of $p$-adic automorphic forms on unitary groups. 

\subsection{Notation and conventions}\label{notation-section}

We now establish some notation and conventions that we will use throughout the paper.

First, we establish some notation for fields.  Fix a totally real number field $\cmfield^+$ and an imaginary quadratic extension $\imquad$ of $\IQ$.  Define $\cmfield$ to be the composition of $\cmfield^+$ and $\imquad$.  Let $c$ denote complex conjugation on $\cmfield$, i.e. the generator of $\Gal(\cmfield/\cmfield^+)$. We denote by $\arch$ the set of complex embeddings of $\cmfield^+$, and we denote by $\archK$ the set of complex embeddings of $\cmfield$.  We typically use $\tau$ to denote an element of $\arch$, and for each $\tau \in \arch$, we fix an extension $\tilde\tau$ of $\tau$ to $K$, i.e. $\tilde \tau$ is an element of $\archK$.
 A reflex field will be denoted by $\reflex$ (with subscripts to denote different reflex fields when there is more than one reflex field appearing in the same context).  
 Given a local or global field $L$, we denote the ring of integers in $L$ by $\mathcal{O}_L$.  We write $\bA$ to denote the adeles over $\IQ$, we write $\bA^\infty$ to denote the adeles away from the archimedean places, and we write $\bA^{\infty,p}$ to denote the adeles away from the archimedean places and $p$.

Fix a rational prime $p$ that splits as $p=w\cdot w^c$ in the imaginary quadratic extension $\imquad/\mathbb{Q}$. We make this assumption in order to ensure that our unitary group at $p$ is a product of (restrictions of scalars of) general linear groups.  Instead, we could assume that every place of $\cmfield^+$ above $p$ splits in the quadratic extension $\cmfield/\cmfield^+$ and choose a CM type for $\cmfield$. In addition, we restrict our attention to the case when the prime $p$ is \emph{unramified} in $K$. This ensures that the Shimura varieties we consider have smooth integral models over $\Oep$ (where $\Oe$ is the ring of integers in the reflex field $\reflex$ of these Shimura varieties) when no level structure at $p$ is imposed.

To help the reader keep track of each setting, we adhere to the following conventions for fonts used to denote schemes, integral models, and formal completions throughout the paper.  Schemes over $\bQ$ are in normal font, their integral models are in mathcal font, and their formal completions are in mathfrak font.

\section{Unitary Shimura varieties}\label{shvars-section}
In this section, we introduce unitary Shimura varieties.  In Section \ref{unitarygroups-section}, we introduce PEL data and conventions for unitary groups, with which we work throughout the paper.  Section \ref{PELmoduli-section} introduces the PEL moduli problem, and Section \ref{abvc-section} specializes to the setting over $\IC$. In our exposition, we follow \cite{kottwitz} and \cite{lan}.

\subsection{PEL data and unitary groups}\label{unitarygroups-section} The following definition of the PEL datum follows \cite[Section 1.2]{lan}, and it is an integral version of the datum in  \cite[Section~4]{kottwitz}.

By a {\it PEL datum}, we mean a tuple $\left(\cmfield, c, L, \langle, \rangle, h\right)$ consisting of
\begin{itemize}
\item the CM field $\cmfield$ equipped with the involution $c$ introduced in Section \ref{notation-section},
\item an $\OK$-lattice $L$, i.e. a finitely generated free $\ZZ$-module with an action of $\OK$,
\item a non-degenerate Hermitian pairing $\langle\cdot,\cdot\rangle:L\times L\to \mathbb{Z}$ satisfying $\langle k\cdot v_1,v_2 \rangle=\langle v_1,k^c\cdot v_2 \rangle$ for all $v_1, v_2\in L$ and $k\in \OK$, 
\item an $\mathbb{R}$-algebra endomorphism \[h:\mathbb{C}\to \mathrm{End}_{\OK\otimes_\mathbb{Z}\mathbb{R}}(L\otimes_{\mathbb{Z}}\mathbb{R})\] such that $(v_1,v_2)\mapsto \langle v_1,h(i)\cdot v_2\rangle$ is symmetric and positive definite and such that $\langle h(z) v_1, v_2\rangle = \langle v_1, h(\overline{z})v_2 \rangle$.
\end{itemize}
Furthermore, for considering the moduli problem over a $p$-adic ring and for defining $p$-adic automorphic forms, we require
\begin{itemize}
\item{$L_p:=L\otimes_{\mathbb{Z}}\mathbb{Z}_p$ is self-dual under the alternating Hermitian pairing $\langle \cdot,\cdot\rangle_p$ on $L\otimes_{\mathbb{Z}}\mathbb{Q}_p$.} 
\end{itemize}

To the PEL datum $\left(\cmfield, c, L, \langle, \rangle, h\right)$, we associate algebraic groups $GU= GU\left(L, \langle, \rangle\right)$,
$U=U\left(L, \langle, \rangle\right)$, and $SU=SU\left(L, \langle, \rangle\right)$ defined over $\ZZ$, whose $R$-points (for any $\ZZ$-algebra $R$) are given by
\begin{align*}
	GU(R)&:=\left\{(g,\similitude)\in \mathrm{End}_{\OK\otimes_\mathbb{Z}R}(L\otimes_{\mathbb{Z}}R) \times R^\times\mid\langle g\cdot v_1,g\cdot v_2\rangle =\similitude\langle v_1,v_2\rangle\right\}\\
	U(R)&:=\left\{g\in \mathrm{End}_{\OK\otimes_\mathbb{Z}R}(L\otimes_{\mathbb{Z}}R) \mid\langle g\cdot v_1,g\cdot v_2\rangle =\langle v_1,v_2\rangle\right\}\\
	SU(R)&:=\left\{g\in U(R) \mid \det g=1\right\}.
\end{align*} 
Note that $\similitude$ is called a {\it similitude factor}.   In the following, for $R=\bQ$ or $\IR$, we also write 
\[GU_+(R):=\left\{(g,\similitude)\in GU(R)\mid \similitude>0\right\}.\]

Moreover, given a PEL datum $\left(\cmfield, c, L, \langle, \rangle, h\right)$, we define the $\IR$-vector space with an action of $\cmfield$ 
\begin{align*}
V:=L\otimes_{\ZZ}\IR.
\end{align*}
Then $h_\bC=h\times_{\mathbb{R}}\mathbb{C}$ gives rise to a decomposition $V\otimes_{\IR}\mathbb{C}=V_1\oplus V_2$ (where $h(z)\times 1$ acts by $z$ on $V_1$ and by $\bar{z}$ on $V_2$).  We have decompositions $V_1 = \oplus_{\tau\in\archK} V_{1, \tau}$ and $V_2 = \oplus_{\tau\in\archK}V_{2, \tau}$ {induced from the decomposition of $K\otimes_{\bQ} \bC = \oplus_{\tau \in \Sigma_K} \bC$ where only the $\tau$-th $\bC$ acts nontrivially on $V_{1,\tau} \oplus V_{2,\tau}$, acting via the standard action on $V_{1,\tau}$.} As defined in \cite[Definition 1.2.5.2]{lan}, the {\it signature} of $\left(V, \langle, \rangle, h\right)$ is the tuple of pairs $\left(\siga_\tau, \sigb_\tau\right)_{\tau\in\archK}$ such that $\siga_\tau = \dim_\IC V_{1, \tau}$ and $\sigb_\tau = \dim_\IC V_{2, \tau}$ for all $\tau\in\archK$.  Let
\begin{align*}
n=\siga_\tau+\sigb_\tau.
\end{align*}
Note that $n$ is independent of $\tau$ and furthermore $a_{\pm\tau} = a_{\mp\tau^c}$.

In order to define automorphic forms of non scalar weight in Section \ref{classautforms-section} and \ref{padic-section}, we define the algebraic group over $\bZ$ 
$$\Levin :=\prod\limits_{\tau \in \arch} \GL_{\siga_{\tilde\tau}} \times \GL_{\sigb_{\tilde\tau}},$$  where $\wt \tau \in \archK$ is a previously fixed lift of $\tau \in \arch$. Note that $\Levin_\bC$ can be identified with the Levi subgroup of $U({\bC})$ that preserves the decomposition $V_\bC=V_1\oplus V_2$. This identification also works over $\bZ_p$, as we will see in Section {\ref{Igusalevel}}.

Moreover, using the decomposition $\bC \otimes \bC = \bC \oplus \bC$ of $\bR$-modules (where $z \in \bC$ acts on the first summand by $z$ and on the second summand by $\ov z$) we define $ \mu: \bC \ra V_\bC \text{ by } z \mapsto h_\bC(z,1)$.  (Compare \cite[Section~12]{Milne}.)
Then the {\it reflex field} is defined to be the field of definition of the $GU(\bC)$-conjugacy class of $\mu$ (or equivalently as the conjucagy class of $V_1$). Henceforth, we denote the reflex field by $E$  (note that $E\subset \IC$).

\subsection{PEL moduli problem}\label{PELmoduli-section}
The goal of this section is to introduce PEL-type unitary Shimura varieties from a moduli-theoretic perspective. We will restrict our attention to cases where these Shimura varieties have no level structure and good reduction at $p$.  For more details, see~\cite[Section 5]{kottwitz} or \cite[Section 1.4]{lan}.

We now define a moduli problem for abelian varieties equipped with extra structures (more precisely, polarizations, endomorphisms and level structure) and which will be representable by unitary Shimura varieties that have integral models.  For each open compact subgroup $\cpct\subset GU(\mathbb{A}^\infty)$, consider the moduli problem \[(S,s)\mapsto \{\PELtuple\}\] which assigns to every connected, locally noetherian scheme $S$ over $E$ together with a geometric point $s$ of $S$ the set of tuples $\PELtuple$, where
\begin{itemize}
\item $A$ is an abelian variety over $S$ of dimension $g:=[K^+:\mathbb{Q}]\cdot n$,
\item $\ENDO:K\hookrightarrow \mathrm{End}^0(A):=(\mathrm{End}(A))\otimes_{\mathbb{Z}}\mathbb{Q}$ is an embedding of $\mathbb{Q}$-algebras,
\item $\polarization: A\to A^\vee$ (where $A^\vee$ denotes the dual abelian variety) is a polarization satisfying $\lambda \circ i(k^c)=i(k)^\vee \circ \lambda$ for all $k\in K$,
\item ${\level}$ is a $\pi_1(S,s)$-invariant $\cpct$-orbit of $K\otimes_{\mathbb{Q}}\mathbb{A}^\infty$-equivariant isomorphisms \[L\otimes_{\mathbb{Z}}\mathbb{A}^\infty \toisom V_fA_s,\] which takes the Hermitian pairing $\langle \cdot,\cdot \rangle$ on $L$ to an $(\mathbb{A}^\infty)^\times$-multiple of the $\lambda$-Weil pairing on the rational (adelic) Tate module $V_fA_s$.
\end{itemize}
Note that $\mathrm{Lie}\ A$ is a locally free $\cO_S$-module of rank $g$ and has an induced action of $K$ via $i$. The tuple $\PELtuple$ must satisfy Kottwitz's \emph{determinant condition}:  \[\mathrm{det}(K|V_1)=\mathrm{det}_{\cO_S}(K|\mathrm{Lie}\ A).\] Here, by $\mathrm{det}(K|V_1)$ we denote the element in $E[K^\vee]=\Sym(K^\vee)\otimes_\bQ E$, for $K^\vee$ the $\bQ$-vector space dual to $K$, defined by $k\mapsto \mathrm{det}_{\IC}(k|V_1)$, for all $k\in K$. By definition of the reflex field, $\mathrm{det}(K|V_1)\in E[K^\vee]\hookrightarrow \IC[K^\vee]$. Similarly, $\mathrm{det}_{\cO_S}(K|\mathrm{Lie}\ A)$ denotes the element in $\cO_S[K^\vee]=\Sym(K^\vee)\otimes_\bQ \cO_S(S)$
defined by $k\mapsto \mathrm{det}_{\cO_S}(k|\mathrm{Lie}\ A)$, for all $k\in K$. The determinant condition is an equality of elements in $\cO_S[K^\vee]$, after taking the image of $\mathrm{det}(K|V_1)$  under the structure homomorphism of $E$ to $\cO_S(S)$.

Two tuples $\PELtuple$ and $\left(A',\ENDO',\polarization',\level'\right)$ are equivalent if there exists an isogeny $A\to A'$ taking $\ENDO$ to $\ENDO'$, $\polarization$ to a rational multiple of $\polarization'$, and $\level$ to $\level'$. We note that the definition is independent of the choice of geometric point $s$ of $S$. We can extend the definition to non-connected schemes by choosing a geometric point for each connected component.

If the compact open subgroup $\cpct$ is neat (in particular, if it is sufficiently small) as defined in \cite[Definition 1.4.1.8]{lan}, then this moduli problem is representable by a smooth, quasi-projective scheme $\modulispace_\cpct/E$.

From now on, assume that $\cpct=\cpct^p\cpct_p$ is neat and that $\cpct_p\subset GU\left(\mathbb{Q}_p\right)$ is hyperspecial. 
We can construct an integral model of $M_\cpct$ by considering an integral version of the above moduli problem. To a pair $(S,s)$, where $S$ is now a scheme over $\Oep$, we assign the set of tuples $\left(A,\ENDO,\polarization, \level^p\right)$, where
\begin{itemize}
\item $A$ is an abelian variety over $S$ of dimension $g$,
\item $i:{\cO_{K,(p)}} \hookrightarrow (\mathrm{End}(A))\otimes_{\mathbb{Z}}\mathbb{Z}_{(p)}$ is an embedding of $\mathbb{Z}_{(p)}$-algebras
\item $\lambda: A\to A^\vee$ is a prime-to-$p$ polarization satisfying $\lambda \circ i(k^c)=i(k)^\vee \circ \lambda$ for all $k\in \cO_K$,
\item $\alpha^p$ is a $\pi_1(S,s)$-invariant $\cpct^p$-orbit of $K\otimes_{\mathbb{Q}}\mathbb{A}^{\infty,p}$-equivariant isomorphisms \[L\otimes_{\mathbb{Z}}\mathbb{A}^{\infty,p} \toisom V^p_fA_s,\] which takes the Hermitian pairing $\langle \cdot,\cdot \rangle$ on $L$ to an $(\mathbb{A}^{\infty,p})^\times$-multiple of the $\lambda$-Weil pairing on $V^p_fA_s$ (the Tate module away from $p$).
\end{itemize}
In addition, the tuple $\PELtuple$ must satisfy Kottwitz's \emph{determinant condition}: \[\mathrm{det}(\OK|V_1)=\mathrm{det}_{\cO_S}(\OK|\mathrm{Lie}A).\]
Here, the determinant condition is an equality of elements in $\cO_S[\OK^\vee]$, after taking the image of $\mathrm{det}(\OK|V_1)\in (\Oep)[\OK^\vee]$ 
under the structure homomorphism of  $\Oep$ to $\cO_S$, for $\OK^\vee$ the dual $\ZZ$-module of $\OK$. 

Two tuples $\PELtuple$ and $\left(A',\ENDO',\polarization',\level'\right)$ are equivalent if there exists a prime-to-$p$ isogeny $A\to A'$ taking $\ENDO$ to $\ENDO'$, $\polarization$ to a prime-to-$p$ rational multiple of $\polarization'$ and $\level$ to $\level'$.

This moduli problem is representable by a smooth, quasi-projective scheme $\cM_\cpct$ over $\Oep$. (See, for example, page 391 of~\cite{kottwitz} for a discussion of representability and smoothness. The representability is reduced to the Siegel case, proved in~\cite{mumford}, while the smoothness follows from Grothendieck-Messing deformation theory.)  We have a canonical identification \[M_\cpct=\cM_\cpct\times_{\mathrm{Spec}\ (\Oep)}\mathrm{Spec}\ E,\] which can be checked directly on the level of moduli problems.  As the level $\cpct^p$ varies, the inverse system of Shimura varieties $\cM_\cpct$ has a natural action of $GU(\mathbb{A}^{\infty,p})$. (More precisely, $g\in GU(\mathbb{A}^{\infty, p})$ acts by precomposing the level structure $\alpha$ with it.) Since our interest is in the $p$-adic theory, we will fix and suppress the level $\cpct$ starting from Section \ref{padic-section}.

\subsection{Abelian varieties and Shimura varieties over $\IC$}\label{abvc-section}
In this section, we specialize to working over $\IC$.  Our goal is to sketch how the set $M_\cpct (\mathbb{C})$ of complex points of $M_\cpct$ is naturally identified with the set of points of a finite union of locally symmetric complex varieties corresponding to $(GU,h)$.  (For more details, see~\cite[Section 8]{kottwitz}.)

We remark that what we show is merely a bijection of sets. Proving that the Shimura varieties corresponding to $(GU,h)$ are moduli spaces of abelian varieties over $\IC$ would also require matching the complex structures on the two sides.

\subsubsection{Abelian varieties over $\IC$}
Recall that the $\bC$-points of an abelian variety $A/\IC$ are of the form $V(A)/\Lambda$, where $\Lambda$ is a $\bZ$-lattice in a complex vector space $V(A)$. Any abelian variety over $\bC$ admits a polarization; since $V(A)/\Lambda$ comes from a complex abelian variety $A$, it is also polarizable, i.e. there exists a nondegenerate, positive definite Hermitian form
 	$$\lambda_{\bC}: V(A)\times V(A) \rightarrow \bC \quad \mbox{s.t. } \lambda_{\bC}(\Lambda,\Lambda) \subset \bZ.$$
 	We call each such Hermitian form $\lambda_{\bC}$ a {\it polarization} of $V(A)/\Lambda$. It may be better to think of a polarization as an alternating form $\lambda_{\bR}: V(A) \times V(A) \rightarrow \bR$ satisfying $\lambda_\bR(iu,iv) = \lambda_\bR(u,v)$ for all $u,v \in V(A)$ and
	$$\lambda_{\bC}(u,v) = \lambda_{\bR}(u,iv) + i\lambda_{\bR}(u,v).$$
It is enough to characterize a pair $(A,\lambda_{\bC})$, where $A$ is an abelian variety of dimension $g$, by considering the following triple:
\begin{enumerate}
\item the free $\bZ$-module $\Lambda = H_1(A,\bZ)$ of rank $2g$
\item the $\bR$-algebra homomorphism $\bC \rightarrow \End_{\bR}(\Lambda \otimes \bR) = \End_{\bR}(H_1(A,\bR)) = \End_{\bR}(\Lie A)$ describing the complex structure on $\Lie A$ (so $V(A):=\Lambda\otimes \bR$, endowed with this complex structure)
\item the alternating form on $\Lambda=H_1(A,\bZ)$ induced by $\lambda_{\bC}$ denoted by $\langle \cdot,\cdot\rangle$ after identifying $A(\bC) \cong \Lie A(\bC)/H_1(A,\bZ)$
\end{enumerate} 

\subsubsection{Shimura varieties over $\IC$}\label{moduli-section}
Recall that, associated to the PEL datum, we have the  $\IR$-vector space $V=L\otimes_\bZ\IR$, which is endowed with an action of $K$  and the complex structure defined by $h$. (Note that the complex structure depends uniquely on $h(i)\in \End_{K\otimes_\bQ\IR}(V)$.)

Let $\fh$  denote the set of elements $I\in \End_{K\otimes_\bQ\IR}(V)$ which satisfy
\begin{enumerate}
\item $I^2 = -1$  
\item $I^{c} = -I$
\item $(w,v) \mapsto \<w,Iv\>$ is a positive  or negative definite form on $V$
\item the $K\otimes_\bQ\IC$-structures on $V$ defined by $I$ and $h(i)$ are isomorphic.
\end{enumerate}
In \cite[Lemmas 4.1 and 4.2]{kottwitz}, Kottwitz shows that the set $\fh$ is equal to $ GU(\bR)/\centralizer_{h}$, for $\centralizer_h$ the stabilizer of $h(i)$ in $GU(\bR)$, and that it can be identified with a finite union of copies of the symmetric domain for the identity component of $GU(\IR)$. 

For $\cpct \subset GU(\bA^\infty)$  a neat open compact subgroup, we define the quotient
	\begin{equation}X_{\cpct} = GU(\bQ)\bs (GU(\bA^\infty)/\cpct \times \fh).\end{equation}
We sketch how the set of complex points of $M_\cpct$ corresponds to a disjoint union of finitely many copies of $X_{\cpct}$. 

By definition, the set  $M_{\cpct}(\bC) $  parametrizes equivalence classes of tuples $\PELtuple$ where
\begin{enumerate}
\item $A$ is an abelian variety over $\bC$,
\item $\ENDO: K \hookrightarrow \End^0(A)$ is an embedding of $\bQ$-algebras,
\item $\polarization: A \rightarrow A^{\vee}$ is a polarization satisfying $\polarization \circ i(b^c) = i(b)^{\vee} \circ\polarization$ for all $b \in K$,
\item $\level$ 
is a $\cpct$-orbit of isomorphisms of skew-Hermitian $K$-vector spaces 
(in the sense of Kottwitz, i.e. preserving the pairing only up to scalar) $L \otimes_{\bZ} \bA^\infty \cong H_1(A,\bA^\infty)$.
\end{enumerate}
In addition,  the tuple $\PELtuple$ must satisfy the determinant condition.

Every equivalence class of tuples $\PELtuple$ satisfying the above restrictions gives rise to an element $GU(\bA^\infty)/\cpct \times \fh$ as follows. 
The existence of the equivalence class of isomorphisms $\level$ implies that the skew-Hermitian $K$-vector spaces  $L_\bQ:=L\otimes_\ZZ\bQ$ and $H = H_1(A,\bQ)$ are isomorphic over any finite place of $\bQ$. We conclude in particular that $H_1(A,\bQ)$ and $L_\bQ$ have the same dimension over $K$. 
By \cite[Lemma 4.2]{kottwitz}, $H_\IR$ and $V=L_\bQ \otimes_\bQ \bR$ are isomorphic as skew-Hermitian $K$-vector spaces  if and only if they are isomorphic as $K\otimes_\bQ\IC$-modules.   The  natural complex structure on $H_\IR\cong {\rm Lie}(A)$ gives a decomposition of $H_\IC$ as $H_1\oplus H_2$, and the determinant condition implies that $H_1$ is isomorphic to $V_1$ as $K\otimes_\bQ\IC$-modules. Thus, in order to deduce that $H_\IR$ and $V$ are isomorphic as $K\otimes_\bQ\IC$-modules it suffices to prove that $V_2$ and $H_2$ are isomorphic as $K\otimes_\bQ\IC$-modules. Note that if we denote by $W_\tau$   the $\IC$-subspace where $K$ acts via $\tau$, for $\tau:K\hookrightarrow \IC$ a complex embedding ($\tau\in\Sigma_K$) and  $W$ any $K\otimes_\bQ\IC$-module, then two $K\otimes_\bQ\IC$-modules $W,W'$ are isomorphic if and only if $\dim_\IC W_\tau=\dim_\IC W'_\tau$, for all $ \tau\in\Sigma_K$.
Let $\tau\in\Sigma_K$. The determinant condition implies that $\dim_\IC V_{1,\tau}=\dim_\IC H_{1,\tau}$, the decompositions $V_\IC=V_1\oplus V_2$ and $H_\IC= H_1\oplus H_2$ imply
$\dim_\IC V_{\tau}+\dim_\IC V_{\tau^c}= \frac{1}{2}(\dim_\IC V_{1,\tau}+\dim_\IC V_{2,\tau^c}+\dim_\IC V_{1,\tau^c}+\dim_\IC V_{2,\tau})=\dim_\IC V_{1,\tau}+\dim_\IC V_{2,\tau},$ and  $\dim_\IC H_{\tau}+\dim_\IC H_{\tau^c}= \dim_\IC H_{1,\tau}+\dim_\IC H_{2,\tau},$ and the equality  $\dim_K L_\bQ=\dim_K H$ implies $\dim_\IC V_{\tau}+\dim_\IC V_{\tau^c}=\dim_\IC H_{\tau}+\dim_\IC H_{\tau^c}$. We deduce that $\dim_\IC V_{2,\tau}=\dim_\IC H_{2,\tau}$ for all $\tau\in\Sigma_K$, i.e. that $H_2$ and $V_2$ are also isomorphic as $K\otimes_\bQ\IC$-modules.   We conclude that the skew-Hermitian $K$-vector spaces $H$ and $L_\bQ$ are isomorphic over any place $v$ of $\bQ$.

When the Hasse principle holds, this implies the existence of  an isomorphism of skew-Hermitian $K$-vector spaces between $H$ and $L_\bQ$.   In general, there are $$\left|\ker^1(\bQ,GU):=\ker\left(H^1(\bQ,GU)\ra \prod\limits_{v \text{ place of } \bQ} H^1(\bQ_v,GU) \right)\right|$$ isomorphism classes $L^{(i)}$ of skew-Hermitian $K$-vector spaces isomorphic to $L_\bQ$ at every place of $\bQ$ (and let $L^{(1)}=L_\bQ$). 

Let $1\leq i \leq |\ker^1(\bQ,GU)|$. We define $GU^{(i)}$ to be the unitary similitude group over $\bQ$ defined by $L^{(i)}$ (so in particular, $GU^{(1)} = GU$). Choose local isomorphisms $L_{\bQ_v}\cong L^{(i)}_{\bQ_v}$ for all places $v$ of $\bQ$, and let  $GU^{(i)}(\bQ)$ act on $GU(\bA^\infty)/\cpct\times\fh$ via
the induced isomorphisms $GU^{(i)}_{\bQ_v}\cong GU_{\bQ_v}$.  We define $X^{(i)}_{\cpct} = GU^{(i)}(\bQ)\bs (GU(\bA^\infty)/\cpct \times \fh)$, and we define $M^{(i)}_\cpct(\IC)$ to be the subset of $M_\cpct(\IC)$ parameterizing tuples such that $H$ is isomorphic to $L^{(i)}$. Thus,   $M_\cpct(\IC)=\coprod_{i} M^{(i)}_\cpct(\IC)$. We show that $M^{(i)}_\cpct(\IC)$ naturally identifies with $X^{(i)}_{\cpct} $.

Let $i$, where $1\leq i \leq |\ker^1(\bQ,GU)|$, be such that $H$ and $L^{(i)}$ are isomorphic skew-Hermitian $K$-vector spaces, and  choose an automorphism $\level_{\bQ}: H\isomto L^{(i)}$. Then, the automorphism $(\level_{\bQ} \otimes \bI_{\bA^\infty}) \circ \level$ of $L^{(i)} \otimes_\bQ \bA^\infty$ defines an element of $GU(\bA^\infty)$, but since $\level$ is only well-defined up to its orbit in $\cpct$, such an isomorphism determines an element of $GU(\bA^\infty)/\cpct$.  Under $\level_\bQ$, the complex structure on $H_1(A,\bR)$ defines a complex structure on $V$, which is conjugate to $h$ by an element in $GU(\IR)$, i.e., an element in $\fh$.

Therefore, each class of tuples $\PELtuple$ along with a choice of isomorphism $\level_{\bQ}$ determines an element of $GU(\bA^\infty)/\cpct \times \fh$. Forgetting the isomorphism $\level_{\bQ}$ is equivalent to taking the quotient by the left action of $GU^{(i)}(\bQ)$. Thus, to each point of $M^{(i)}_\cpct(\IC)$ we associated a point on $X^{(i)}_\cpct$, and this map is in fact a bijection. 

Note that in \cite[Sections 7 and 8]{kottwitz} Kottwitz shows that under our assumptions (case A in loc. cit.) if $n$ is even the Hasse principle holds, and if $n$ is odd the natural map $\ker^1(\bQ,Z)\to \ker^1(\bQ,GU)$, for $Z$ the center of $GU$, is a bijection, and furthermore that the subvarieties $M^{(i)}_\cpct(\IC)$  are all isomorphic to $M^{(1)}_\cpct(\IC) = X_{\cpct}$. 

For the later sections, we will denote a connected component of $M_{\cpct}(\bC)$ (or equivalently, of $X_{\cpct}(\bC)$) as $S_{\cpct}(\bC)$. Note that any two connected components are isomorphic as complex manifolds.

\section{Classical automorphic forms}\label{classautforms-section}

In this section we will first recall the classical definition of automorphic forms on unitary groups over $\bC$ following \cite{Shimura}, and then describe equivalent viewpoints that let us generalize to work over base rings other than $\bC$.

\subsection{Classical definition of complex automorphic forms on unitary groups.}\label{autformsC-section}
For the moment, suppose $a_{+\tilde\tau}a_{-\tilde\tau}\neq 0$ for all $\tau \in \arch$.  Consider the domain $\HH$ for $GU_+(\bR):$ 
\begin{equation*}
	\HH= \prod_{\tau \in \arch} \HH_{\siga_\taut \times \sigb_\taut} \mbox{ with }  \HH_{\siga_\taut \times \sigb_\taut} = \{ z \in \Mat_{\sigb_\taut \times \siga_\taut}(\bC) \, | \, 1-{}^tz^cz \mbox{ is positive definite} \}.
\end{equation*}
Note that $\Isom_{\cO_K\otimes_\bZ \bR}(L \otimes_\bZ \bR) \simeq \GL_n(\cO_K \otimes \bR) \simeq \GL_n(\prod_{\tau \in \Sigma}\bC) \simeq \prod_{\tau \in \Sigma} \GL_n(\bC)$. We use this identification to write $g \in GU_+(\bR)$ as
$\left(\begin{smallmatrix} a_{g,\tau} & b_{g,\tau} \\ c_{g,\tau} & d_{g,\tau} \end{smallmatrix}\right)_{\tau \in \Sigma} \in GU_{+}(\bR)$, where  $a_{g,\tau} \in \GL_{\siga_\taut}(\bC)$ and $d_{g,\tau} \in \GL_{\sigb_\taut}(\bC)$.  Then the action of $g$ on $\HH$ is given by
\begin{equation*}
	gz = ((a_{g,\tau}z_{\tau}+b_{g,\tau})(c_{g,\tau}z_{\tau}+d_{g,\tau})^{-1})_{\tau \in \Sigma} \, \mbox{ for } \, z=(z_{\tau})_{\tau \in \Sigma} \in  \prod_{\tau \in \arch} \HH_{\siga_\taut \times \sigb_\taut}.
\end{equation*}
By \cite[12.1]{Shimura}, $\cH$ is the irreducible (Hermitian) symmetric domain for $SU(\bR)$. By the classification of Hermitian symmetric domains and \cite[Corollary~5.8]{Milne2}, $\HH$ is uniquely determined by the adjoint group of a connected component of $GU(\bR)$.  Recall from Section \ref{moduli-section} that we can identify $\fh$ with a finite union of copies of the symmetric domains for the identity component of $GU(\IR)$. Hence $\fh$ can be identified with a finite (disjoint) union of copies of $\cH$.

In order to define the desired transformation properties that automorphic forms should satisfy, we need to introduce a few more definitions. 
Using the above notation, for $g \in GU_+(\bR)$ and $z=(z_\tau)_{\tau \in\arch} \in \HH$ the \textit{factors of automorphy} 
for each $\{\tilde{\tau},\tilde{\tau}^c\} \subset \Sigma_K$ above $\tau \in \Sigma$ are defined by
\begin{equation*}
 \mu_{\tilde{\tau}}(g,z):=c_{g,\tau}z_\tau + d_{g,\tau} \mbox{ and } \mu_{\tilde{\tau}^c}(g,z) := \ov b_{g,\tau} {^t z_\tau} + \ov a_{g,\tau},
\end{equation*}
and the \textit{scalar factors of automorphy} are
\begin{equation*}
 j_\tau(g,z):=\det(\mu_\tau(g,z)) \mbox{ for } \tau \in \archK .
\end{equation*}

So far, we have considered the case in which $a_{+\tilde\tau}a_{-\tilde\tau}\neq 0$.  Now, suppose $a_{+\tilde\tau}a_{-\tilde\tau}=0$.  In this case, 
$\HH_{\siga_\taut \times \sigb_\taut}$ is defined to be the element $0$, with the group acting trivially on it.  Following \cite[Section 3.3]{Shimura}, if $a_{-\tilde\tau}=0$, we define 
\begin{align*}
\mu_{\tilde\tau^c}(g, z)&=\overline{g}\\
\mu_{\tilde\tau}(g, z)&=1\\
j_\tau(g, z) &=1,
\end{align*}
and if $a_{+\tilde\tau}=0$, we define
\begin{align*}
\mu_{\tilde\tau}(g, z) &= g\\
\mu_{\tilde\tau^c}(g, z) & = 1\\
j_\tau(g, z) &= \det(g).
\end{align*}

\begin{remark}
By \cite[Equation (1.19)]{shunitary}, for all $\tau \in \Sigma$ and $g \in GU_+(\bR)$,
\begin{align*}
\det(\mu_{\tilde{\tau}^c}(g,z)) = \det\left(g \right)^{-1}\nu\left(g\right)^{a_{+,\tilde{\tau}}}\det(\mu_{\tilde{\tau}}(g,z)).
\end{align*} 
\end{remark}
Define 
\begin{equation*}
 M_g(z):=(\mu_{\tilde{\tau}^c}(g,z),\mu_{\tilde{\tau}}(g,z))_{\tau \in \arch} \quad  \in  \prod_{\tau \in \arch} \Mat_{\siga_{\tilde{\tau}}\times\siga_{\tilde{\tau}}} \times \Mat_{\sigb_{\tilde{\tau}}\times\sigb_{\tilde{\tau}}} 
\end{equation*}
If $\rep: \Levin(\bC) = \prod\limits_{\tau \in \arch} \GL_{\siga_{\tilde\tau}}(\bC) \times \GL_{\sigb_{\tilde\tau}}(\bC) \ra \GL(X)$ is a rational representation into a finite-dimensional complex vector space $X$, $f: \HH \ra X$ a map and $g \in GU_+(\bR)$, then denote by $f||_\rep g: \HH \ra X$ and $f|_\rep g: \HH \ra X$ the maps given by
\begin{align*}
 (f||_\rep g)(z)&:=\rep(M_g(z))^{-1}f(gz)\\
f|_\rep g&:= f||_\rep \left(\nu(g)^{-1/2}g\right)
\end{align*}
for all $z\in \HH$.  Note that for all $g\in U(\bR)$,
\begin{align*}
f|_\rep g &= f||_\rep g.
\end{align*}

Associated to an $\cO_F$-lattice $L_F$ in  $L\otimes_\bZ \bQ$ 
and an integral $\cO_F$-ideal $\fc$, we define the subgroup
\begin{equation*}
\Gamma(L_F,\fc):=\{ g \in GU_+(\bQ) \, | \, {^tL_F}g={^tL_F} \mbox{ and } {^tL_F}(1-g) \subset \fc {^tL_F} \} .
\end{equation*}
Then a \textit{congruence subgroup} $\Gamma$ of $GU_+(\bQ)$ is a subgroup of $GU_+(\bQ)$ that contains $\Gamma(L_F, \fc)$ as subgroup of finite index for some choice of $(L_F, \fc)$ as above.

\begin{definition} \label{def-first-autom-forms}
 Let $\Gamma$ be a congruence subgroup of $GU_+(\bQ)$, $X$ a finite-dimensional complex vector space and $\rep: \Levin(\bC) \ra \GL(X)$ a rational representation. A function $f: \HH \ra X$ is called a \textit{(holomorphic) automorphic form }of weight $\rep$ with respect to $\Gamma$ if it satisfies the following properties
 \begin{enumerate}
  \item $f$ is holomorphic,
 \item $f||_\rep \gamma = f$ for every $\gamma$ in $\Gamma$,
 \item if $\Sigma$ consists of only one place $\tau$ and $(a_{+\tilde\tau},a_{-\tilde\tau})=(1,1)$, then $f$ is holomorphic at every cusp.
 \end{enumerate}
We will call a function $f:\HH \ra X$ that satisfies property (2), but not necessarily (1) and (3) an \textit{automorphic function}. \end{definition}

\begin{remark} \label{remark-Koecher} Note that if we are not in the case in which both $\Sigma$ consists of only one place $\tau$ and $(a_{+\tilde\tau},a_{-\tilde\tau})=(1,1)$, then Koecher's principle implies that an automorphic form is automatically holomorphic at the boundary.  (See \cite[Thm.~2.5]{lankoecher} for a very general version.)
\end{remark}

\begin{remark}
Sometimes, in the definition of an automorphic form, the second condition of Definition \ref{def-first-autom-forms} is replaced by $f|_\rep \gamma = f$ for every $\gamma$ in $\Gamma$.  The condition that arises from geometry, though, is $f||_\rep \gamma = f$.  Since our main results and proofs are geometric (and since we want this definition of automorphic forms to agree with the geometric definitions we give later), we require $f||_\rep \gamma = f$ instead of $f|_\rep \gamma = f$ in this paper.
\end{remark}

\subsubsection{Weights of an automorphic form} \label{Section-weights}

The irreducible algebraic representations of $\Levin =\prod_{\tau \in \arch} \GL_{\siga_{\tilde\tau}} \times \GL_{\sigb_{\tilde\tau}}$ over $\bC$ are in one to one correspondence with dominant weights of a maximal torus $T$ (over $\bC$). 
More precisely, let $T$ be the product of the diagonal tori $T_{\siga_\taut} \times T_{\sigb_{\taut}}$ for $\tau \in \arch$. For $1 \leq i \leq \siga_\taut+\sigb_{\taut}$, let $\eps^{\tau}_i$ in $X(T):=\Hom_\bC(T,\bG_m)$ be the character defined by
	$$T(\bC) = \prod_{\sigma \in \arch} T_{\siga_\sigmat}(\bC) \times T_{\sigb_{\sigmat}}(\bC) \ni \diag(x^{\sigma}_1, \hdots, x^{\sigma}_{\siga_\sigmat+\sigb_\sigmat})_{\sigma \in \arch} \mapsto x^\tau_i \in \bG_m(\bC).$$ 
These characters form a basis of the free $\bZ$-module $X(T)$, and we choose $\Delta=\{ \alpha^{\tau}_i := \eps^{\tau}_i - \eps^{\tau}_{i+1} \}_{\tau \in \arch, 1 \leq i < \siga_\taut+\sigb_\taut, i \neq \siga_\taut}$ as a basis for the root system of $\Levin$. 
Then the {\em dominant weights} of $T$ with respect to $\Delta$ are $X(T)_+=\{ \kappa \in X(T) \, | \, \< \kappa, \check{\alpha} \> \geq 0 \, \forall \alpha \in \Delta  \}$, and using the above basis of $X(T)$ they can be identified as follows:
	$$X(T)_+ \cong \{(n^{\tau}_1,\hdots, n^{\tau}_{\siga_{\taut}+\sigb_{\taut}})_{\tau \in \arch} \in  \prod\limits_{\tau \in \arch} \bZ^{\siga_{\taut} + \sigb_\taut} : n^{\tau}_i \geq n^{\tau}_{i+1} \forall i \neq \siga_\taut\}.$$
For such a dominant weight $\kappa$, let $\rho_{\kappa}: \Levin_{\bC} \rightarrow M_{\rho}$ denote the irreducible algebraic representation of highest weight $\kappa$.  (See, for example, \cite[Part~II.  Chapter~2]{Jantzen}.) 
We call $f$ an automorphic form of weight $\kappa$, where $\kappa \in X(T)_+$, if $f$ is an automorphic form of weight $\rho_{\kappa}$.

\begin{remark}
	In view of later generalizations, note that $\rho_\kappa$ can be defined as an algebraic (i.e. schematic) representation of the algebraic group $\Levin$ over the integers $\bZ$. More precisely, let $T$ be the maximal split torus of $\Levin$ extending the diagonal torus over $\bC$ constructed above and note that $\Hom_\bZ(T,\bG_m)=\Hom_\bC(T,\bG_m)$, i.e. we can view $\kappa \in X(T)_+$ as a character of $T$ defined over $\bZ$. Let $B$ be a Borel subgroup containing $T$ (corresponding to upper triangular matrices in $\Levin$) and $B^-$ the opposite Borel, and denote by $N$ and $N^-$ the unipotent radicals of $B$ and $B^-$, respectively.  Then $\kappa$ can be viewed as a character of $B^-$ acting trivially on $N^-$ via the quotient $B^- \twoheadrightarrow T$, and the universal representation $\rho_\kappa$ of highest weight $\kappa$ is given by 
	$$\Ind_{B^-}^\Levin (-\kappa) = \{ f: \Levin/N^- \rightarrow \bA^1 \, | \, f(ht) = \kappa(t)^{-1}f(h), t \in T\},$$
	 where $\bA^1$ is the affine line and on which $\Levin$ acts via $$\Levin \ni h: f(x) \mapsto \rho_{\kappa}(h)f(x) = f(h^{-1}x),$$ see \cite{Jantzen}. This representation is irreducible of highest weight $\kappa$ over any field of characteristic zero.
	\end{remark}

\subsubsection{Unitary domains and moduli problems}

We will now reformulate Definition \ref{def-first-autom-forms} in order to generalize it in Section \ref{classical-aut-forms}. We quickly recall the correspondence between quadruples described in Section \ref{moduli-section} and points of $\cH$.

Fix a PEL datum $(K,c,L,\langle,\rangle,h)$ and a neat open compact subgroup $\cpct \subset GU(\bA^\infty)$; let $\modulispace_{\cpct}(\bC)$ denote the complex Shimura variety of level $\cpct$ associated to the PEL datum discussed in Section \ref{moduli-section}, and let $S_{\cpct}(\bC)$ be a connected component in $M^{(1)}_{\cpct}(\bC)$. Recall that $V = L \otimes_{\bZ} \bR$.
We now describe the identification between elements $z \in \Gamma\backslash \cH$ and abelian varieties $\uA \in S_{\cU}^{}(\bC)$ given by \cite[Chapter I]{Shimura}. Shimura defines for $z \in \cH$ an $\bR$-linear isomorphism $p_z: V \rightarrow \bC^g$ which induces a Riemann form on $\bC^g$:
	$$E_z(p_z(x),p_z(y)) = \langle x,y\rangle \quad x,y \in V.$$
This implies that $A_z = \bC^g/p_z(L)$ is an abelian variety together with a polarization $\polarization_z$ corresponding to $E_z$. For $k \in K$, define $\ENDO_z(k)$ to be the element of $\End_{\bQ}(A_z)$ induced by the action of $h(k_{{\tau}})$ {acting on $V_{\tau}$} and let $\level_z$ denote the $\cU$-orbit of isomorphisms $V \otimes \bA^\infty \cong H_1(A_z,\bA^\infty)$ induced by $p_z$. Altogether, $p_z$ gives rise to the Riemann form $E_z$, the following commutative diagram:
\begin{equation}  \label{structure-diagram}
 \xymatrix{ 0 \ar[r] & L \ar[r] \ar[d] & V \ar[r]\ar[d] & V/L \ar[r]\ar[d] & 0 \\
             0 \ar[r] & \Lambda \ar[r] &  \bC^g \ar[r] & A  \ar[r] & 0,
  }
\end{equation}	
and $\uA_z := (A_z,\ENDO_z,\polarization_z,\level_z)$. Shimura \cite[Theorem 4.8]{Shimura} (together with \cite[Proposition 4.1]{Milne}) further proves that 
\begin{proposition} \label{thm-shimura} 
	For each $z \in \cH$, $\ul A_z \in S_{\cpct}^{}(\bC)$ for some neat open $\cpct$. Conversely, if $\uA \in S_{\cpct}(\bC)$ for some $\cpct$, then there is a $z \in \cH$ such that $\uA$ is equivalent to $\ul A_z$. Furthermore $\ul A_z$ and $\ul A_w$ for $z,w \in \HH$ are equivalent if and only if $w=\gamma z$ for some $\gamma \in \Gamma_\cpct := \cpct\cap GU_+(\bQ).$
\end{proposition}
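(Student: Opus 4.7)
My plan is to construct the map $z \mapsto \uA_z$ of the first claim explicitly, produce an inverse assignment $\uA \mapsto z_{\uA}$ using the level structure to transport the Hodge decomposition back to $V$, and then match the two equivalence relations to obtain the quotient description. The whole argument is really a dressed-up rephrasing of the analysis already carried out in Section \ref{moduli-section} --- the quadruple $\uA_z$ is built so that the target of its level structure is literally the source of $\level_{\bQ}$ there.

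First I would verify that $z \mapsto \uA_z$ produces a valid PEL tuple. Positivity of the pairing $(v_1, v_2) \mapsto \<v_1, h(i) v_2\>$, combined with condition (3) in the definition of $\cH$, ensures that the Riemann form $E_z$ makes $A_z = \bC^g / p_z(L)$ into a polarizable complex abelian variety with polarization $\polarization_z$. The compatibility $\polarization_z \circ \ENDO_z(k^c) = \ENDO_z(k)^\vee \circ \polarization_z$ is inherited from the PEL axiom $\<k \cdot v_1, v_2\> = \<v_1, k^c \cdot v_2\>$, and Kottwitz's determinant condition on $\Lie A_z$ is exactly condition (4) in the definition of $\cH$, which identifies the $K \otimes_\bQ \bC$-module structure induced on $V$ by $z$ with the one induced by $h$. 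The level structure $\level_z$ arises from the composite $L \otimes \bA^\infty \hookrightarrow V \otimes \bA^\infty \cong V_f A_z$ coming from diagram \eqref{structure-diagram}, and for neat $\cpct$ its $\cpct$-orbit is automatically $\pi_1$-invariant.

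For the converse, given $\uA = (A, \ENDO, \polarization, \level) \in S_{\cpct}(\bC)$, I would follow the discussion in Section \ref{moduli-section}: the $\ker^1(\bQ, GU)$ argument there yields an isomorphism $\level_{\bQ} : H_1(A, \bQ) \toisom L \otimes \bQ$ of skew-Hermitian $K$-vector spaces, well-defined up to the left action of $GU(\bQ)$. Transporting the natural complex structure on $H_1(A, \bR) \cong \Lie A$ through $\level_{\bQ}$ gives an $\bR$-linear endomorphism $I_{\uA} \in \End_{K \otimes_\bQ \bR}(V)$, and I would verify directly the four axioms of $\cH$: $I_{\uA}^2 = -1$ (complex structure), $I_{\uA}^c = -I_{\uA}$ (from Hermitian compatibility of $\polarization$), sign-definiteness of $(w, v) \mapsto \<w, I_{\uA} v\>$ (from the Riemann form), and isomorphism of $K \otimes_\bQ \bC$-module structures with $h$ (again from the determinant condition). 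Quotienting by $GU(\bQ)$ on the $\level_{\bQ}$ side and by $\cpct$ on the level-structure side produces a well-defined class in $\Gamma_{\cpct} \backslash \cH$ inside the chosen connected component $S_{\cpct}(\bC)$, and the two assignments are mutually inverse by construction.

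Finally, $\uA_z \sim \uA_w$ means there is a $K$-linear quasi-isogeny $A_z \to A_w$ matching the polarizations up to a positive rational scalar and carrying $\level_z$ to $\level_w$ modulo $\cpct$; pulling back to $V$ via $p_z$ and $p_w$, this is precisely an element of $GU_+(\bQ) \cap \cpct = \Gamma_{\cpct}$ sending $z$ to $w$, and conversely any such $\gamma$ manifestly induces an equivalence of tuples. The main obstacle will be the converse direction: producing $I_{\uA} \in \cH$ from $\uA$ cleanly requires choosing compatible representatives of $\level$ and of the $\bQ$-rational extension $\level_{\bQ}$, and then verifying the signature condition (4) of $\cH$, which is the place where Kottwitz's determinant condition is used essentially to match the $K \otimes_\bQ \bC$-module decomposition $H_1(A, \bC) = H_1 \oplus H_2$ with $V_{\bC} = V_1 \oplus V_2$.
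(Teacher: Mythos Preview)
The paper does not actually prove this proposition: it is stated as a citation of Shimura \cite[Theorem 4.8]{Shimura} together with \cite[Proposition 4.1]{Milne}, with no argument given beyond the preceding expository discussion in Section~\ref{moduli-section}. So there is no ``paper's own proof'' to compare against in the strict sense. Your plan is a reasonable reconstruction of the standard argument and is broadly consistent with the outline the paper sketches in Section~\ref{moduli-section} (transport the complex structure on $H_1(A,\bR)$ back to $V$ via a rational isomorphism $\level_\bQ$, then quotient by the ambiguity in choosing $\level_\bQ$ and $\level$).

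One genuine slip: you repeatedly refer to ``condition (3)'' and ``condition (4) in the definition of $\cH$,'' but those four conditions define $\fh$, not $\cH$. In the paper $\cH = \prod_\tau \cH_{a_{+\tilde\tau}\times a_{-\tilde\tau}}$ is the bounded matrix domain, and $\fh$ is the set of complex structures $I$ on $V$ satisfying (1)--(4); the two are related by the identification of $\fh$ with a finite union of copies of $\cH$. Your argument really produces an $I_{\uA} \in \fh$, and you then need the (implicit) uniformization $\fh \supset GU_+(\bR)\cdot h(i) \cong \cH$ to extract a point $z$. This is harmless once stated, but as written the references to numbered conditions of $\cH$ do not parse. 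A second minor point: for the converse direction you invoke the $\ker^1(\bQ,GU)$ discussion, but since $S_\cpct(\bC)$ is by definition a connected component of $M^{(1)}_\cpct(\bC)$, the existence of $\level_\bQ: H_1(A,\bQ)\isomto L_\bQ$ is already built into the hypothesis $\uA \in S_\cpct(\bC)$ and no Hasse-principle argument is needed there.
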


\subsubsection{Second definition of classical automorphic functions}
For $\uA \in S_{\cU}^{}(\bC)(\bC)$, let $\uo = H^1(A,\bZ) \otimes \bC$ which has a decomposition into $\uo^{\pm}$ depending on the (induced) action of $h(i)$ on $\uo$. Set
\begin{equation*}
	{\EA^\pm}_\tau=\Isom_\bC(\bC^{\sigpm_\taut},\uo_\tau^\pm) \quad \mbox{ and } \quad \EA=\bigoplus_{\tau \in \arch}({\EA^+}_\tau \oplus {\EA^-}_\tau).
\end{equation*}
The group $\GL_{\sigpm_\taut}(\bC)$ acts on ${\EA^\pm}_\tau$ by 
\begin{equation*}
	(g.f)(x)=f(\transp{g}x) \, \, \mbox{ for } g \in \GL_{\sigpm_\taut}(\bC), f \in {\EA^\pm}_\tau, x \in \bC^{\sigpm_\taut},
\end{equation*}
which induces a (diagonal) action of $\Levin(\bC)$ on $\EA$.

For $z \in \cH$, the map $p_z$ induces a choice of basis on $H_1(A_z,\bZ)$ via the identification with $p_z(L)$, which by duality induces a choice of basis $\cB_z$ of $\uo$. This is equivalent to giving an element $\l_z$ of $\EAz{z}$.

\begin{lemma} \label{lemma-second-def-easy} Let $\rep: \Levin(\bC) \ra \GL(X)$ be a rational representation. Then there exists a one-to-one correspondence between automorphic functions of weight $\rep$ with respect to $\Gamma_\cpct$ and the set of functions $F$ from pairs $(\ul A, l)$, where $\ul A \in S_{\cU}^{}(\bC)$ and $l \in \EA$, to $X$ satisfying
	\begin{equation} \label{eqn-F-transformation}
		F(\ul A, gl)=\rep(\transp{g})^{-1}F(\ul A, l) \mbox{ for all } g \in \Levin(\bC).
	\end{equation}
	The bijection is given by sending a function $F$ to the automorphic function $f_F: z \mapsto F(\ul A_z, l_z)$.
\end{lemma}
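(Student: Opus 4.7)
The plan is to build the correspondence in both directions and then verify well-definedness, with the entire content reducing to a single compatibility between the factor of automorphy $M_\gamma(z)$ and the change-of-basis on $\uo$ induced by a congruence $\gamma \in \Gamma_\cpct$. For the forward direction (from $F$ to $f_F$), define $f_F(z) := F(\ul A_z, l_z)$ using the recipe right before the statement, and check that $f_F$ is an automorphic function of weight $\rep$. By Proposition \ref{thm-shimura}, any $\gamma \in \Gamma_\cpct$ induces an equivalence $\ul A_{\gamma z} \sim \ul A_z$ in $S_\cpct(\bC)$, so $F(\ul A_{\gamma z}, l_{\gamma z}) = F(\ul A_z, \tilde l)$ where $\tilde l \in \EAz{z}$ is the image of $l_{\gamma z}$ under the induced identification of $\uo_{\gamma z}$ with $\uo_z$. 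The transformation rule \eqref{eqn-F-transformation} then turns the relation $\tilde l = g \cdot l_z$ (for some $g \in \Levin(\bC)$) into $f_F(\gamma z) = \rep(\transp g)^{-1} f_F(z)$, which is exactly the condition $(f_F \|_\rep \gamma)(z) = f_F(z)$ provided $g = M_\gamma(z)$.

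The main step, and the one where I expect the real work to sit, is this identification $g = M_\gamma(z)$. One has to unwind Shimura's construction of $p_z$ from diagram \eqref{structure-diagram}: the basis $\cB_z$ of $\uo$ is the dual of the basis of $H_1(A_z,\bZ)$ provided by the marking $p_z$, and a direct comparison of $p_{\gamma z}$ with $p_z \circ \gamma^{-1}$ expresses this as multiplication by the factors of automorphy $\mu_{\tilde \tau}(\gamma,z)$ and $\mu_{\tilde\tau^c}(\gamma,z)$ on the $\uo^+_\tau$ and $\uo^-_\tau$ components respectively. Taking transposes (because of the duality and the convention $(g.f)(x)= f(\transp g x)$ defining the $\Levin(\bC)$-action on $\EA$) yields exactly $l_{\gamma z} = M_\gamma(z) \cdot l_z$ under the identification of $\EAz{\gamma z}$ with $\EAz{z}$. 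Applying \eqref{eqn-F-transformation} with $g = M_\gamma(z)$ gives the desired $\rep$-equivariance, i.e.\ property (2) of Definition \ref{def-first-autom-forms}. (Holomorphy and the cusp condition play no role here; holomorphy will be imposed separately in Section \ref{classical-aut-forms} when we upgrade to automorphic \emph{forms}.)

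For the inverse direction, given an automorphic function $f$ of weight $\rep$, I will define $F(\ul A, l)$ as follows. By Proposition \ref{thm-shimura} there exists $z \in \cH$ and an equivalence $\ul A \sim \ul A_z$; this equivalence transports $l \in \EA$ to some $l' \in \EAz{z}$, and since $\Levin(\bC)$ acts simply transitively on $\EAz{z}$, we can write $l' = g \cdot l_z$ for a unique $g \in \Levin(\bC)$. Set $F(\ul A, l) := \rep(\transp g)^{-1} f(z)$. Well-definedness comes from the following: a different choice of $z$ differs by an element $\gamma \in \Gamma_\cpct$, and the computation of the first paragraph replaces $g$ by $g \cdot M_\gamma(z)^{-1}$ (as a direct diagram chase shows), so the formula for $F$ is unchanged precisely because $f \|_\rep \gamma = f$. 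Finally, checking that the two assignments $F \mapsto f_F$ and $f \mapsto F$ are mutually inverse is immediate once the identification $l_{\gamma z} = M_\gamma(z) \cdot l_z$ has been established, so the entire lemma rests on that one calculation.
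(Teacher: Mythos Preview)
Your approach is essentially the same as the paper's: define $f_F(z)=F(\ul A_z,l_z)$, reduce the automorphy of $f_F$ to comparing $l_{\gamma z}$ with $l_z$ under the identification $\ul A_{\gamma z}\simeq\ul A_z$ coming from Proposition~\ref{thm-shimura}, and build the inverse by choosing $z$ with $\ul A\simeq\ul A_z$ and the unique $g$ with $l=g\cdot l_z$. The paper proceeds in exactly this order, citing \cite[p.~27]{Shimura} for the comparison.

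There is, however, a transpose--inverse slip in your key identification. Shimura's isomorphism $\ul A_{\gamma z}\isomto\ul A_z$ is given by $\transp{M_\gamma(z)}$ on $\bC^g$ (it carries $p_{\gamma z}(L)$ to $p_z(L)$); passing to $\uo$ by duality and using the convention $(g.f)(x)=f(\transp g x)$ on $\EA$, one finds that $l_{\gamma z}$ is sent to $\transp{M_\gamma(z)}^{-1}\cdot l_z$, not to $M_\gamma(z)\cdot l_z$. With your value $g=M_\gamma(z)$ the transformation rule \eqref{eqn-F-transformation} yields $f_F(\gamma z)=\rep(\transp{M_\gamma(z)})^{-1}f_F(z)$, which is \emph{not} the condition $f_F\|_\rep\gamma=f_F$ (that would require $\transp{M_\gamma(z)}^{-1}=M_\gamma(z)$). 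With the correct value $g=\transp{M_\gamma(z)}^{-1}$ one gets
\[
F(\ul A_{\gamma z},l_{\gamma z})=F(\ul A_z,\transp{M_\gamma(z)}^{-1} l_z)=\rep(M_\gamma(z))\,F(\ul A_z,l_z),
\]
which is exactly what is needed. The same correction feeds through your inverse construction and your claim that the two assignments are mutually inverse; once fixed, your proof and the paper's coincide.
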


\begin{proof} (sketch) 
	We will first show that the map $F \mapsto (f_F: z \mapsto F(\ul A_z, l_z))$ is well defined and afterwards construct an inverse for it.
	To do the former, let $z \in \HH$, $\gamma \in \Gamma_{\cpct}$. We need to show that $F(\ul A_z, l_z)=\rep(M_\gamma(z))^{-1}F(\ul A_{\gamma z}, l_{\gamma z})$. It follows from \cite[page 27]{Shimura} that $\transp{M_\gamma(z)}$ maps $p_{\gamma z}(L)$ to $p_{z}(L)$ and defines an isomorphism between $\ul A_{\gamma z}$ and $\ul A_{z}$. Note that under this isomorphism, $l_{\gamma z}$ of $\uo$ gets mapped to $\transp{M_\gamma(z)}^{-1} l_z$. 
	Hence using property (\ref{eqn-F-transformation}), we obtain
	\begin{equation*}
		F(\ul A_{\gamma z},l_{\gamma z})=F(\ul A_z, \transp{M_\gamma(z)}^{-1}l_z)=\rep(M_\gamma(z))F(\ul A_z, l_z)
	\end{equation*}
	as desired.
	
	We define the inverse of $F \mapsto f_F$ as follows: Let $f$ be an automorphic function of weight $\rep$ with respect to $\Gamma_{\cpct}$, and $(\ul A, l)$ where $\uA \in S_{\cU}^{}(\bC)$ and $l \in \EA$. Then, by Proposition \ref{thm-shimura}, there exists $z \in \HH$ such that $\ul A_z$ is isomorphic to $\ul A$, and there exists a unique $g \in \Levin(\bC)$ such that $l=gl_z$. We define $F_f(\ul A, l)=\rep(\transp{g})^{-1}f(z)$. By the transformation property of automorphic function 
	we obtain analogously to above that $f \mapsto F_f$ is well-defined and $F_f$ satisfies (\ref{eqn-F-transformation}). Moreover, $f \mapsto F_f$ is obviously an inverse of $F \mapsto f_F$.
\end{proof}

Automorphic functions of weight $\rep$ with respect to congruence subgroups $\Gamma$ that strictly contains $\Gamma_{\cpct}$ can be characterized in the same style as follows:

\begin{lemma} \label{lemma-second-def} Let $\rep: \Levin(\bC) \ra \GL(X)$ be a rational representation, 
  and $\Gamma$ a congruence subgroup of $GU_+(\bQ)$ containing $\Gamma_{\cpct} = \cpct \cap GU_+(\bQ)$.
	Then there exists a one to one correspondence between automorphic functions of weight $\rep$ with respect to $\Gamma$ and the set of functions $F$ from pairs $(\ul A, l)$, where $\ul A = \PELtuple \in S_{\cpct}^{}(\bC)$  and $l \in \EA$, to $X$ satisfying
	\begin{equation*}
		F(\ul A, gl)=\rep(\transp{g})^{-1}F(\ul A, l) \mbox{ for all } g \in \Levin(\bC),
	\end{equation*}
	and such that for all $\gamma \in \Gamma$ and $z \in \HH$, we have
	\begin{equation} \label{gamma-invariance}
		F(\ul A_z, l_z)=\rep(M_\gamma(z))^{-1}F(\ul A_{\gamma z}, l_{\gamma z}). 
	\end{equation}
	It suffices to check condition (\ref{gamma-invariance}) for a set of representatives of $\Gamma/\Gamma_\cpct$.
\end{lemma}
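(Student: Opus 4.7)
The plan is to bootstrap from Lemma \ref{lemma-second-def-easy}, using the fact that $\Gamma_{\cpct}\subseteq \Gamma$ makes every $\Gamma$-invariant automorphic function automatically $\Gamma_\cpct$-invariant. Concretely, given an automorphic function $f$ of weight $\rho$ with respect to $\Gamma$, I would first apply Lemma \ref{lemma-second-def-easy} to $f$ viewed as a $\Gamma_\cpct$-automorphic function to produce the unique associated function $F=F_f$ on pairs $(\uA,l)$ satisfying the transformation rule \eqref{eqn-F-transformation}. Conversely, given any $F$ satisfying \eqref{eqn-F-transformation}, Lemma \ref{lemma-second-def-easy} produces an automorphic function $f_F:z\mapsto F(\uA_z,l_z)$ of weight $\rho$ for $\Gamma_\cpct$.

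The key step is then to translate the extra invariance condition $f\|_\rho\gamma=f$ for $\gamma\in\Gamma\setminus\Gamma_\cpct$ into a condition on $F$. Unwinding the definition, $f\|_\rho\gamma=f$ is exactly $f(\gamma z)=\rho(M_\gamma(z))f(z)$ for all $z\in\HH$, which under the correspondence $f_F(z)=F(\uA_z,l_z)$ reads
\begin{equation*}
F(\uA_{\gamma z},l_{\gamma z})=\rho(M_\gamma(z))\,F(\uA_z,l_z),
\end{equation*}
i.e.\ exactly condition \eqref{gamma-invariance}. Thus under the bijection of Lemma \ref{lemma-second-def-easy}, automorphic functions for $\Gamma$ correspond to those $F$ in the image which additionally satisfy \eqref{gamma-invariance} for every $\gamma\in\Gamma$.

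It remains to justify the final sentence, that it is enough to check \eqref{gamma-invariance} on a set of representatives of $\Gamma/\Gamma_\cpct$. For this I would use the cocycle identity $M_{\gamma\delta}(z)=M_\gamma(\delta z)M_\delta(z)$ for the factor of automorphy (a standard property, and the reason $\|_\rho$ defines a group action). Given $\gamma\in\Gamma$ satisfying \eqref{gamma-invariance} and $\delta\in\Gamma_\cpct$, the chain
\begin{equation*}
F(\uA_{\gamma\delta z},l_{\gamma\delta z})=\rho(M_\gamma(\delta z))\,F(\uA_{\delta z},l_{\delta z})=\rho(M_\gamma(\delta z))\rho(M_\delta(z))\,F(\uA_z,l_z)=\rho(M_{\gamma\delta}(z))\,F(\uA_z,l_z)
\end{equation*}
shows that \eqref{gamma-invariance} for $\gamma$ together with the already-established $\Gamma_\cpct$-invariance built into $F$ via Lemma \ref{lemma-second-def-easy} yields \eqref{gamma-invariance} for every element of $\gamma\Gamma_\cpct$. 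I do not expect any real obstacle; the only point requiring care is making sure that the passage between the ``$\HH$-side'' condition on $f$ and the ``moduli-side'' condition on $F$ respects the twist by $M_\gamma(z)$, which is exactly what was verified in the proof of Lemma \ref{lemma-second-def-easy} and which I would simply invoke.
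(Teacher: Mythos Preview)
Your proposal is correct and follows exactly the approach the paper intends: the paper's proof is a single sentence stating that the lemma ``follows easily from Lemma \ref{lemma-second-def-easy} and Definition \ref{def-first-autom-forms},'' and your argument simply unwinds this, using the bijection of Lemma \ref{lemma-second-def-easy} at level $\Gamma_\cpct$ and then matching the extra $\Gamma$-invariance $f\|_\rho\gamma=f$ with condition \eqref{gamma-invariance}. Your verification of the last sentence via the cocycle identity for $M_\gamma$ is the natural way to spell out why coset representatives suffice.
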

This lemma follows easily from Lemma \ref{lemma-second-def-easy} and Definition \ref{def-first-autom-forms}.

\subsubsection{Algebraic definition of classical automorphic functions} 
Finally, we would like to view automorphic forms as functions on abelian varieties on $S_{\cpct}(\bC)$. For this, we define the \textit{contracted product} $\EAo =\EA\times^\rep X$
of $\EA$ and $X$ to be the product $\EA \times X$ modulo the equivalence relation given by $\left(l, v \right)\sim\left(gl,\rep(\transp{g})^{-1}v \right)$ for $g \in \Levin(\bC)$. Note that for $g \in GU_+(\bQ)$, the identification $H_1(A_z,\bZ)\otimes \bC =\bC^g = H_1(A_{g z},\bZ)\otimes \bC$
 induces an identification $\iota_g: \EAz{z} \ra \EAz{g z}$, and we can define the isomorphism $i_{g}:\EAoz{z} \ra \EAoz{g z}$ by $(l, v) \ra (\iota_g(l),\rep(M_g(z))v)$. Note that $i_g$ is the identity for $g \in \Gamma_{\cU}$. 
It is an easy exercise to see that Lemma \ref{lemma-second-def} can be reformulated as follows. 
\begin{lemma} \label{lemma-equivalent-def3}
	Let $\rep: \Levin(\bC) \ra \GL(X)$ be a rational representation, 
	 and $\Gamma$ a congruence subgroup of $GU_+(\bQ)$ containing $\Gamma_{\cpct}$.
	Then there exists a one-to-one correspondence between automorphic functions of weight $\rep$ and level $\Gamma$ and the set of functions $\wt F$ from $\ul A \in S_{\cpct}(\bC)$ to $\EAo$
	satisfying
	\begin{equation} \label{gamma-invariance-three}
		i_\gamma \left( \wt F(\ul A_z) \right)=\wt F(\ul A_{\gamma z}) \mbox{ for all } z \in \HH, \gamma \in \Gamma.
	\end{equation}
\end{lemma}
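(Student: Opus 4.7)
The plan is to reduce the claim to Lemma \ref{lemma-second-def} by unpacking the contracted product $\EAo = \EA \times^\rep X$, which is by construction the quotient of $\EA \times X$ by the relation $(l, v) \sim (gl, \rep(\transp g)^{-1} v)$ for $g \in \Levin(\bC)$. Because $\EA$ is a principal $\Levin(\bC)$-torsor, every class in $\EAo$ has a unique representative of the form $(l, v)$ once $l \in \EA$ is fixed. Thus specifying a map $\tilde F$ from $\ul A \in S_\cpct(\bC)$ to $\EAo$ is tautologically the same as specifying, for each $\ul A$, an $\Levin(\bC)$-equivariant map $l \mapsto F(\ul A, l)$.

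Concretely, first I would show the bijection at the level of functions: given $F$ as in Lemma \ref{lemma-second-def}, I would define $\tilde F(\ul A) := [l, F(\ul A, l)]$, with the equivariance property \eqref{eqn-F-transformation} ensuring that $\tilde F(\ul A)$ is independent of the choice of $l \in \EA$. Conversely, given $\tilde F$, I would define $F(\ul A, l)$ to be the unique $v \in X$ with $[l, v] = \tilde F(\ul A)$; then \eqref{eqn-F-transformation} is automatic from the definition of the $\sim$ relation. These two constructions are mutually inverse.

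The only substantive step is matching condition \eqref{gamma-invariance} with \eqref{gamma-invariance-three}. Here I would use the fact that $\iota_\gamma \colon \EAz{z} \to \EAz{\gamma z}$ is induced by the common identification of $H_1(A_z, \bZ) \otimes \bC$ and $H_1(A_{\gamma z}, \bZ) \otimes \bC$ with $\bC^g$ via $p_z$ and $p_{\gamma z}$ respectively; since $l_z$ and $l_{\gamma z}$ are by construction the dual bases induced by these very identifications, one gets $\iota_\gamma(l_z) = l_{\gamma z}$. Applying the definition of $i_\gamma$ to the representative $(l_z, F(\ul A_z, l_z))$ of $\tilde F(\ul A_z)$, one computes
\begin{equation*}
  i_\gamma\bigl(\tilde F(\ul A_z)\bigr) = \bigl[\iota_\gamma(l_z),\, \rep(M_\gamma(z)) F(\ul A_z, l_z)\bigr] = \bigl[l_{\gamma z},\, \rep(M_\gamma(z)) F(\ul A_z, l_z)\bigr],
\end{equation*}
which equals $\tilde F(\ul A_{\gamma z}) = [l_{\gamma z}, F(\ul A_{\gamma z}, l_{\gamma z})]$ if and only if $F(\ul A_z, l_z) = \rep(M_\gamma(z))^{-1} F(\ul A_{\gamma z}, l_{\gamma z})$, which is precisely \eqref{gamma-invariance}. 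Since the whole argument is a formal unwinding of the associated-bundle construction together with the torsor property of $\EA$, I do not foresee any real obstacle; the only point requiring a moment of care is verifying $\iota_\gamma(l_z) = l_{\gamma z}$ from the definitions in the proof of Lemma \ref{lemma-second-def-easy}.
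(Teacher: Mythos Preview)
Your proposal is correct and follows exactly the route the paper intends: the paper itself gives no proof of this lemma beyond the remark that it is ``an easy exercise'' to reformulate Lemma~\ref{lemma-second-def} in these terms, and what you have written is precisely that exercise carried out in detail. Your verification that $\iota_\gamma(l_z)=l_{\gamma z}$ and the resulting matching of \eqref{gamma-invariance} with \eqref{gamma-invariance-three} are the only points with any content, and you have handled them correctly.
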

\begin{remark} \label{rmk-equivalent-def3}
	Note that by Proposition \ref{thm-shimura} giving a function $\wt F$ from $S_{\cpct}^{}(\bC)$ to $\EAo$ satisfying (\ref{gamma-invariance-three}) is the same as giving a global (point-theoretic) section of the vector bundle $\EAoz{z}$ over $\Gamma\backslash \HH$.
\end{remark}

\subsection{(Classical) algebraic automorphic forms}\label{classical-aut-forms}

In Section \ref{autformsC-section}, we considered automorphic forms over $\bC$.  Building on the discussion from Section \ref{autformsC-section}, we now consider automorphic forms over other base rings.  The approach in this section is similar to the approach in \cite[Section 2.5]{EDiffOps} and \cite[Section 1.2]{kaCM}.  Note that \cite{EDiffOps} only considers the case in which the signature  is $(\siga_{\tau},\sigb_{\tau})_{\tau \in \archK}$ with $\siga_{\tau}=\sigb_{\tau}$ for every $\tau \in \archK$, but the definitions from \cite[Section 2.5]{EDiffOps} carry over to the general case with only trivial modifications.
In order not to worry about the holomorphy conditions at cusps, we exclude the case of $\Sigma =\{ \tau \}$ with $(a_{+\taut},a_{-\taut})=(1,1)$ from the discussion in this section, compare Remark \ref{remark-Koecher}. 

For any neat open compact subgroup $\cpct$, consider the integral model $\cM_{\cpct}/\Oep$ introduced in Section \ref{PELmoduli-section}. For any scheme $S$ over $\Spec(\Oep)$, we put
\begin{align*}
	\cM_{\cpct,S}:=\cM_{\cpct}\times_{\Oep}S.
\end{align*} 
When $S = \Spec(R)$ for a ring $R$, we will often write $\cM_{\cpct,R}$ instead of $\cM_{\cpct,\Spec(R)}$. {If $\bW$ denotes the ring of Witt vectors associated to $\overline{\bF}_p$, then consider $\cM_{\cU,\bW}$ (note that since $p$ splits completely, we can base change to $\bW$). In the sequel, we consider (locally noetherian) schemes $S$ over $\bW$. {Note that instead of working over $\Witt$, we could also work over $\cO_{E',(p)}$ in this section, where $E'$ is a finite extension of $E$ that contains $\tau(K)$ for all $\tau \in \Sigma_K$.}}

For any $S$-point $\uA = (A,\ENDO,\polarization,\level^{p})$ of $\cM_{\cpct,\bW}$, let $\uo_{\uA/S}$ denote the locally free ${\bW \otimes \cO_K}$-module defined as the pullback via the identity section of the relative differentials. We have a {natural} decomposition $\uo_{\uA/S} = \bigoplus_{\tau \in \arch} (\uo_{\uA/S,\taut}^+ \bigoplus \uo_{\uA/S,\taut}^-)$ where $\uo_{\uA/S,\taut}^{\pm}$ is rank $\siga_\taut$ and $\sigb_\taut$, respectively. {Note that the element $x \in \cO_K$ acts on $\uo_{\uA/S,\taut}^+$ (resp. $\uo_{\uA/S,\taut}^-$) via {$\taut(x)$} (resp. {$\taut^c(x)$}). {(Here, we view {$\taut$}  as an embedding of $K$ which factors through ${\rm Frac}(\bW)$)}.}
 Define 
$$\EAR^\pm :=\bigoplus_{\tau\in \arch}\Isom_{\mathcal{O}_S}\left(\mathcal{O}_S^{\sigpm_\taut}, \uo_{\uA/S, \tau}^\pm\right) \qquad 
\mbox{and} \qquad
\EAR :=\EAR^+\oplus\EAR^-,
$$
respectively.
Let $R$ be {a $\Witt$}-algebra, and consider an algebraic representation $\rho$ of $\Levin_R$ into a finite free $R$-module $M_{\rho}$.

\begin{definition}[First Equivalent Definition of Algebraic Automorphic Forms]\label{algauto-defi1}
	An automorphic form of weight $\rho$ and level $\cpct$ defined over $R$ is a function $f$
	\begin{align*}
		\left(\uA, \lambdaiso\right)\mapsto f\left(\uA, \lambdaiso\right)\in (M_\rho)_{R'}
	\end{align*}
	defined for all $R$-algebras $R'$, $\uA$ $\in \cM_{\cpct}(R')$, and  $\lambdaiso\in\EARp$, such that all of the following hold:
	\begin{enumerate}
\item{$f\left(\uA, \alpha\lambdaiso\right) = \rho\left(\left({ }^t\alpha\right)^{-1}\right)f\left(\uA, \lambdaiso\right)$ for all $\alpha\in\Levin\left(R'\right)$ and all $\lambdaiso\in\EARp$}
		\item{The formation of $f\left(\uA, \lambdaiso\right)$ commutes with extension of scalars $R_2\rightarrow R_1$ for any $R$-algebras $R_1$ and $R_2$.  More precisely, if $R_2\rightarrow R_1$ is a ring homomorphism of $R$-algebras, then
			\begin{align*}
				f\left(\uA\times_{R_1}{R_2}, \lambdaiso\otimes_{R_1} 1\right) = f\left(\uA, \lambdaiso\right)\otimes_{R_1} 1_{R_2}\in (M_\rho)_{R_2}
			\end{align*}
		}
	\end{enumerate}
\end{definition}
In order to give a different equivalent definition (Definition \ref{algauto-defi2} below), we define for any algebraic representation $\rho$ of $\Levin_R$ into a finite free $R$-module $M_{\rho}$ and $R$-algebra $R'$
\begin{align*}
	\cE_{(\uA/R',\rho)}=\cE_{\uA/R'}\times^{\rho}(M_\rho)_{R'}:=\left(\cE_{\uA/R'}\times(M_\rho)_{R'}\right)/\left(\lambdaiso, m\right)\sim\left(g\lambdaiso, \rho({ }^tg^{-1})m\right),\end{align*}
where $g \in \Levin(R')$ acts on $\cE_{\uA/R'}$ by precomposing with ${ }^tg$.

\begin{definition}[Second Equivalent Definition of Algebraic Automorphic Forms]\label{algauto-defi2}
	An automorphic form of weight $\rho$ and level $\cpct$ defined over a {$\Witt$}-algebra $R$ is a function $\tilde{f}$ 
	\begin{align*}
		\uA\mapsto \tilde{f}(\uA)\in\cE_{(\uA/R',\rho)}
	\end{align*}
	defined for all $R$-algebras $R'$ and $\uA\in\cM_{\cpct}(R')$ such that the formation of $\tilde{f}(\uA)$ commutes with extension of scalars $R_2\rightarrow R_1$ for any $R$-algebras $R_1$ and $R_2$.  More precisely, if $R_2\rightarrow R_1$ is a ring homomorphism of $R$-algebras, then
			\begin{align*}
				\tilde{f}\left(\uA\times_{R_1} R_2\right) = \tilde{f}\left(\uA\right)\otimes_{R_1}1_{R_2}.
			\end{align*}
\end{definition}

\begin{remark}
	The equivalence between Definition \ref{algauto-defi1} and Definition \ref{algauto-defi2} 
	is given by
	\begin{align*}
		\tilde{f}(\uA) = \left(\lambdaiso, f\left(\uA, \lambdaiso\right)\right)
	\end{align*}
	for all abelian varieties $\uA/R$ (corresponding to $\cM(R)$) and $\ell\in\cE_{\uA/R}$.
\end{remark}

Finally, we want to view automorphic forms as global sections of a certain sheaf. In order to do so, let $\Auniv =  \left(A,\ENDO,\polarization, \level^{p}\right)^{\univ}$ denote the universal abelian variety over $\cM_{\cpct,\bW}$, 
 and define the sheaf
  $$\cE=\cE_\cpct:= \bigoplus_{\tau\in \arch}\Isoms_{\mathcal{O}_{\cM_{\cpct,\bW}}}\left(\mathcal{O}_{\cM_{\cpct,\bW}}^{\sigp_\taut}, \uo_{\Auniv/{\cM_{\cpct}}, \tau}^+\right) 
  \oplus \bigoplus_{\tau\in \arch}\Isoms_{\mathcal{O}_{\cM_{\cpct,\bW}}}\left(\mathcal{O}_{\cM_{\cpct,\bW}}^{\sigm_\taut}, \uo_{\Auniv/{\cM_{\cpct,\bW}}, \tau}^-\right),$$
   i.e. for every open immersion $S \hookrightarrow \cM_{\cpct,\bW}$, we set $\cE_\cpct(S)=\cE_{\Auniv_S/S}$.
 Moreover, for any algebraic representation $\rho$ of $\Levin_R$ over a free finite $R$-module $M_\rho$, we define the sheaf $\cE_{\rho}=\cE_{\cpct,\rho} := \cE \times^{\rho} M_{\rho}$, i.e. for each open immersion $\Spec R' \hookrightarrow \cM_{U,\bW}$, set $\cE_{\cpct,\rho}(R')=\cE_{(\Auniv_{R'}/R',\rho)}$.

\begin{definition}[Third Equivalent Definition of Algebraic Automorphic Forms]\label{algauto-defi3}
	An automorphic form of weight $\rho$ and level $\cpct$ defined over $R$ is a global section of the sheaf $\cE_{\cpct,\rho}$ on $\cM_{\cpct,R}$.
\end{definition}

\begin{remark}When we are working with a representation $\rho$ which are uniquely determined by its highest weight $\kappa$, we shall sometimes write $\cE_{\cpct,\kappa}$ or $\cE_\kappa$, in place of $\cE_{\cpct,\rho}$. 
\end{remark}
\begin{remark}
		Usually automorphic forms are defined over a compactification of $\cM_{\cpct,R}$, but in our case, i.e. excluding the case of $\Sigma$ consisting only of one place $\tau$ and $(a_{+\tau},a_{-\tau})=(1,1)$, both definitions are equivalent by Koecher's principle.
	\end{remark}

\subsubsection{Comparison with classical definition of complex automorphic forms}
Having defined algebraic automorphic forms over general base rings, we will show that in the special case of the base ring being $\bC$ the definition coincides with the classical definition of complex automorphic forms given in Section \ref{autformsC-section}. 

For an integer $N$, we define $\cpct_N$ to be a compact open subgroup of $GU(\bA^\infty)$ such that 
	$$GU_+(\bQ) \cap \cpct_N = \Gamma(N) := \{(g,\nu) \in GU_+(\bQ) : g \equiv 1 \mod N\}.$$
	
\begin{proposition} \label{prop-comparison} Let $N$ be a large enough integer so that $\cpct_N$ is neat, and let $\rho$ be an algebraic representation of $\Levin$ over $\bC$. Then there is a bijection between the (algebraic) automorphic forms of weight $\rho$ defined in Definition \ref{algauto-defi3} as global sections of $\cE_\rho$ on ${{\modulispace}_{\cpct_N}}(\bC)$ and a finite set of holomorphic automorphic forms of weight $\rho$ with respect to $\Gamma(N)$ as defined in Definition \ref{def-first-autom-forms} in Section \ref{autformsC-section}.
\end{proposition}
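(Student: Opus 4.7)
The plan is to reduce to Lemma \ref{lemma-equivalent-def3} component-by-component: first decompose $\modulispace_{\cpct_N}(\bC)$ into its finitely many connected components, next identify the algebraic sheaf $\cE_\rho$ restricted to each component with the analytic vector bundle $\EAo$ appearing in that lemma, and finally use Koecher's principle to reconcile the notion of ``algebraic global section'' with that of ``holomorphic automorphic form satisfying the growth condition.''

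More concretely, Section \ref{abvc-section} shows $\modulispace_{\cpct_N}(\bC) = \coprod_i M^{(i)}_{\cpct_N}(\bC)$, with each $M^{(i)}_{\cpct_N}(\bC)$ bijective to $X^{(i)}_{\cpct_N}=GU^{(i)}(\bQ)\backslash(GU(\bA^\infty)/\cpct_N\times \fh)$. Since $\fh$ is a finite disjoint union of copies of $\cH$ and $GU^{(i)}(\bQ)\backslash GU(\bA^\infty)/\cpct_N$ is finite (because $\cpct_N$ is neat), the whole $\bC$-scheme breaks into finitely many connected components $S_j$, each of the analytic form $\Gamma_j\backslash \cH$ for some congruence subgroup $\Gamma_j \supseteq \Gamma(N)$ (up to conjugation inside $GU_+(\bQ)$). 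Over each $S_j$, the diagram \eqref{structure-diagram} and the construction of $p_z$ canonically identify $\uo_{\Auniv/S_j}^{\pm}$ with the analytic $\uo^{\pm}$ from Section \ref{autformsC-section}, and this identification is $\Levin(\bC)$-equivariant. Hence the analytification of $\cE_\rho|_{S_j}$ is precisely the vector bundle $\EAo$ on $\Gamma_j\backslash\cH$, and a global analytic section of the latter is, by Lemma \ref{lemma-equivalent-def3} together with Remark \ref{rmk-equivalent-def3}, the same as a holomorphic automorphic function of weight $\rho$ with respect to $\Gamma_j$.

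It remains to match algebraic sections with holomorphic automorphic \emph{forms} in the sense of Definition \ref{def-first-autom-forms}, i.e.\ to take care of analyticity plus the growth condition. Given an algebraic global section of $\cE_\rho$ on $\modulispace_{\cpct_N,\bC}$, its analytification is automatically holomorphic, and Koecher's principle (Remark \ref{remark-Koecher}) guarantees holomorphy at the cusps since we have excluded the excluded case $\Sigma=\{\tau\}$ with signature $(1,1)$. Conversely, a compatible collection of classical holomorphic automorphic forms of weight $\rho$ with respect to the various $\Gamma_j$ gives an analytic section of $\cE_\rho$ that extends across the boundary of a toroidal compactification (again by Koecher), and by GAGA applied to this proper compactification the section is algebraic, hence restricts to an algebraic global section of $\cE_\rho$ on $\modulispace_{\cpct_N,\bC}$. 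Counting one form per connected component gives the ``finite set'' of the statement.

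The main obstacle will be the bookkeeping in the last paragraph: verifying that the transformation law $F(\uA,g\ell)=\rho({}^t g)^{-1}F(\uA,\ell)$ from Definition \ref{algauto-defi1} translates, via the identification $\ell_z\leftrightarrow p_z$, into the factor-of-automorphy rule $f\|_\rho\gamma = f$ used in Definition \ref{def-first-autom-forms} (and similarly for each $\Gamma_j$). This is an unwinding of how $\transp{M_\gamma(z)}$ intervenes in the proof of Lemma \ref{lemma-second-def-easy}, but one must be careful with the transposes and with the comparison between the ``double bar'' and ``single bar'' conventions. Beyond that, invoking GAGA over a toroidal compactification rather than over $\modulispace_{\cpct_N,\bC}$ itself (which need not be proper) is the one non-formal analytic input, and can be cited from \cite{lan}.
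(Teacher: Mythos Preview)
Your proposal is correct and follows essentially the same route as the paper's proof sketch: decompose $M_{\cpct_N}(\bC)$ into connected components, each isomorphic to a quotient $\Gamma\backslash\cH$, then invoke Lemma~\ref{lemma-equivalent-def3} with Remark~\ref{rmk-equivalent-def3} together with GAGA. Your write-up is in fact more careful than the paper's terse sketch—you correctly flag that GAGA must be applied over a proper (toroidal) compactification rather than over the open $M_{\cpct_N,\bC}$ itself, and you make explicit that Koecher's principle is what lets you pass back and forth across the boundary; the paper leaves both of these points implicit.
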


\begin{proof} (sketch)
From the the classification of Hermitian symmetric spaces mentioned in Section \ref{autformsC-section}, one can deduce that $\modulispace^{(i)}_{\cpct_N}(\bC)$ (as defined in \S\ref{moduli-section}) is isomorphic to a finite union of copies of $\Gamma \backslash \HH$.  By GAGA
  and Lemma \ref{lemma-equivalent-def3} together with Remark \ref{rmk-equivalent-def3}, we conclude that the global sections of $\cE_\rho=\cE_{\cpct, \rho}$ on ${\modulispace_{\cpct_N}}({\bC})$ are in one-to-one correspondence with a finite set of holomorphic automorphic forms of weight $\rho$ with respect to $\Gamma(N)$, one for each connected component of $M_{\cpct_N}^{(i)}(\bC)$ for all $i$.
\end{proof}

\section{$p$-adic theory}\label{padic-section} 
Section \ref{Igusalevel} introduces the Igusa tower, a tower of finite \'etale Galois coverings of the ordinary locus of $\cM_\cpct$, which we denote by $\cM$ from now on, since we fix the neat level $\cpct$ throughout the rest of the paper.  Section \ref{padicaut-section} introduces $p$-adic automorphic forms, which arise as global sections of the structure sheaf of the Igusa tower.

We are mainly following \cite[Section~8]{hida} and \cite{hidairreducibility}.
\subsection{The Igusa tower over the ordinary locus}\label{Igusalevel} 
Recall that our Shimura varieties with hyperspecial level at $p$ admit integral models $\cM$ over $\cO_{E,(p)}$. These Shimura varieties have a neat level away from $p$, but we suppress it from the notation since the tame level won't affect the geometry of our integral models. As we will see below, the geometry of the integral models $\cM$ is governed by the $p$-divisible group of the universal abelian variety over $\cM$.

 In order to guarantee that the ordinary locus (defined below) over the special fiber of $\cM$ is nonempty, we make the following assumption: the prime $p$ splits completely in the reflex field $E$ (in ~\cite{wedhorn} Wedhorn proves that such an assumption is both necessary and sufficient). In this case, the ordinary locus is open and dense in the special fiber of $\cM$. 
Choose a place $P$ of $E$ above $p$ and let $E_P$ be the corresponding completion of $E$, with ring of integers $\cO_{E_P}$ and residue field $k$.
By abuse of notation, we will still denote the base change of $\cM$ to $\cO_{E_P}$ by $\cM$.  Let $S$ be a scheme of characteristic $p$.

\begin{defn}An abelian variety $A/S$ of dimension $g$ is ordinary if for all geometric points $s$ of $S$, the set $A[p](s)$ has $p^g$ elements.
\end{defn}

For every abelian variety $A/S$, the Hasse invariant $\mathrm{Ha}_{p-1}(A/S)$ is a global section of $\omega_{A/S}^{\otimes (p-1)}$, where $\omega_{A/S}$ is the top exterior power of the pushforward to $S$ of the sheaf of invariant differentials on $A$. It is easy to show that an abelian variety $A$ is ordinary if and only if $\mathrm{Ha}(A/S)$ is invertible. (We now sketch an argument for this, as in \cite[Lemma III.2.5]{scholze}: the Hasse invariant, which corresponds to pullback along the Verschiebung isogeny, is invertible if and only if Verschiebung is an 
isomorphism on tangent spaces, which happens if and only if Verschiebung is finite etale.  A degree computation shows that this is equivalent to the condition that $A $ be ordinary.) We define the ordinary locus \[\Mbarord\subset \overline{\cM}:= \cM\times_{\cO_{E_P}} k\] to be the complement of the zero set of the Hasse invariant. 
Since $p$ splits completely in $E$, the nonemptiness of $\Mbarord$ follows from~\cite{wedhorn}. In fact, Wedhorn proves something stronger, namely that $\Mbarord$ is dense in the special fiber $\overline{\cM}$. We also define the ordinary locus $\Mord$ over $\cO_{E_P}$ to be the complement of the zero set of a lift of some power of the Hasse invariant.

In addition, we define $\Sord$ over $\Witt$ to be a connected component of $\Mord_\Witt := \Mord\times_{\cO_{E_P}}\Witt$. (Recall that $p$ splits completely in $E$, so the base change to $\Witt$ makes sense.) In other words, $\Sord$ is the ordinary locus of one connected component $\mathcal{S}$ of $\mathcal{M}_\Witt$.

\begin{remark} We note that we could define the ordinary locus in an alternate way, using the stratification of $\overline{\cM}$ in terms of the isogeny class of the $p$-divisible group $\cG:=\cA_{\overline{\cM}}[p^\infty]$, where $\cA_{\overline{\cM}}$ denotes the universal abelian variety over $\overline{\cM}$. The isogeny class of the $p$-divisible group $\cG$ (equipped with all its extra structures) defines a stratification of $\overline{\cM}$ with locally closed strata, which is called the \emph{Newton stratification}. The ordinary locus, corresponding to the constant 
 isogeny class $(\mu_{p^\infty})^g\times (\mathbb{Q}_p/\mathbb{Z}_p)^g$, is the unique open stratum.
\end{remark}

Let $\cA:=\cA_{\Mord}$ be the universal ordinary abelian variety over $\Mord$. Pick a $\Witt$-point $x$ of $\Sord$, with an $\overline{\mathbb{F}}_p$-point $\bar x \in \Sord$ below it.
We can identify $L_p$ (defined as in Section \ref{unitarygroups-section}) with the $p$-adic Tate module of the $p$-divisible group $\cG_x$, i.e. the $p$-adic Tate module of $\cA_x$. Choose such an identification $L_p\simeq T_p\cA_x[p^\infty]$, compatible with the $\cO_K$-action and with the Hermitian pairings. The kernel of the reduction map \[T_p\cA_x[p^\infty]\to T_p\cA_{\bar{x}}[p^\infty]^{\mathrm{\et}}\] corresponds to an $\cO_K$-direct summand of $\cL \subset L_p$.  Note that the lattice $\cL$ is independent of the choice of $x$ inside the connected component $\Sord$, and hence the different connected components of $\Mord$ can be labeled by lattices $\cL$.
Moreover, using the self-duality of $L_p$ under the Hermitian pairing $\langle\cdot,\cdot\rangle$ and the compatibility with the Weil-pairing, we can identify the dual $\cL^\vee$ of $\cL$ with the orthogonal complement of $\cL$ inside $L_p$.

\begin{remark}\label{explicitdecomposition} By considering the primes in $\cmfield^+$ above $p$ individually, we can write down an explicit formula for $\cL$, 
	using the fact that each such prime splits from $\cmfield^+$ to $K$. The exact formula for $\cL$ as an $\cO_K$-module will depend on the set of signatures of the unitary similitude group $GU(\mathbb{R})$. More precisely, recall that for each embedding $\tau : \cmfield\hookrightarrow \mathbb{C}$, $\left(\siga_\tau,\sigb_\tau\right)$ is the signature of $GU$ at the infinite place $\tau$. Choose an isomorphism $\iota_p:\mathbb{C}\toisom \mathbb{\bar Q}_p$. By composing with $\iota_p$, each $\tau$ determines a place of $\cmfield$ above $p$. Let $p=\prod_{i=1}^r \mathfrak{p}_i$ be the decomposition of $p$ into prime ideals of $\cmfield^+$. Each $\mathfrak{p}_i$ splits in $K$ as $\mathfrak{p}_i=\mathfrak{P}_i\mathfrak{P}^c_i$, where $\mathfrak{P}_i$ lies above the prime $w$ of $F$. The $i$-term of $L_p$ (obtained from the decomposition $\cO_K\otimes_{\mathbb{Z}}\mathbb{Z}_p=\bigoplus_{i=1}^r (\cO_{K_{\mathfrak{P}_i}}\oplus \cO_{K_{\mathfrak{P}^c_i}})$) can be identified with \[\cO_{K_{\mathfrak{P}_i}}^n\oplus \cO_{K_{\mathfrak{P}^c_i}}^n.\] Then the determinant condition implies that \[\cL \simeq \bigoplus_{i=1}^r(\cO_{K_{\mathfrak{P}_i}}^{\siga_{\tau_i}}\oplus \cO_{K_{\mathfrak{P}^c_i}}^{\sigb_{\tau_i}}),\] where $\tau_i$ is a place inducing $\mathfrak{P}_i$.  Note that there is a natural decomposition $\cL=\cL^+\oplus \cL^-$, coming from the splitting $p=w\cdot w^c$.  Also note that $\Levin\left(\mathbb{Z}_p\right)\cong\prod_{i=1}^r \left( GL_{\siga_{\tau_i}}( \cO_{K_{\mathfrak{P}_i}})\times GL_{\sigb_{\tau_i}}( \cO_{K_{\mathfrak{P}^c_i}})\right)$ can be identified with the $\cO_K$-linear automorphism group of $\cL$, 
	which induces a natural action of $\Levin$ on the dual $\cL^\vee$ of $\cL$ (by precomposing with the inverse) and on all other spaces defined in terms of $\cL$. 
\end{remark}

Now, we introduce the {\it Igusa tower} over the component $\Sord$.
For $n \in \bN$, consider the functor 
\begin{align*}
\mathrm{Ig}^{\mathrm{ord}}_{n}: \left\{\mathrm{Schemes}/\Sord\right\}\rightarrow\left\{\mathrm{Sets}\right\}
\end{align*}
that takes an $\Sord$-scheme $S$ to the set of $\OK$-linear closed immersions
\begin{equation} \label{equation-Igusa-structure}
	\cL \otimes_{\mathbb{Z}}\mu_{p^n} \hookrightarrow \cA_S[p^n],
\end{equation}
 where $\cA_S:=\cA_{\Sord}\times_{\Sord}S$. This functor is representable by an $\Sord$-scheme, which we also denote by $\mathrm{Ig}^{\mathrm{ord}}_{n}$. Let $\Witt_m:=\Witt/p^m\Witt$, and  for each $m\in\mathbb{Z}_{\geq 1}$, define $\Sord_m := \Sord\times_{\Witt}\Witt_m$. Then $\Ign_{n,m}:=\Ign_n \times_\Witt \Witt_m$ is a scheme over $\Witt_m$, whose functor 
takes an $\Sord_m$-scheme $S$ to the set of $\OK$-linear closed immersions \[\cL \otimes_{\mathbb{Z}}\mu_{p^n} \hookrightarrow \cA_S[p^n].\] 
For each $n\geq 1$, $\mathrm{Ig}^{\mathrm{ord}}_{n,m}$ is a finite \'etale and Galois covering of $\Sord_m$ whose Galois group is the group of $\cO_K$-linear automorphisms of $\cL^\vee/p^n\cL^\vee$. (See Section 8.1.1 of~\cite{hida} for a discussion of representability and of the fact that these Igusa varieties are finite \'etale covers of $\Sord_m$: the key point is that $\cL^\vee/p^n\cL^\vee$ is an \'etale sheaf.)

We also define the formal scheme $\Ig_n$ to be the formal completion of $\Ign_n$ along the special fiber $\Sord_{\fpb}$, i.e. as a functor from \{$\Sord$-schemes on which $p$ is nilpotent\} to \{Sets\}, we have $\Ig_n=\varinjlim_{m}\Ign_{n,m}$. 

This formal scheme is a finite \'etale and Galois cover of $\mathfrak{S}^{\mathrm{ord}}$, the formal completion of $\Sord$ along its special fiber. As we let $n$ vary, we obtain a tower of finite \'etale coverings of $\mathfrak{S}^{\mathrm{ord}}$, called the \emph{Igusa tower}. The Galois group of the whole Igusa tower over $\mathfrak{S}^\mathrm{ord}$ can be identified with $\Levin(\mathbb{Z}_p)$. 
 
The inverse limit of formal schemes $\mathfrak{Ig}^{\mathrm{ord}}_n$ also exists as a formal scheme, which we denote by $\mathfrak{Ig}^{\mathrm{ord}}$.  The point is that $(\mathfrak{Ig}^{\mathrm{ord}}_n)_{n\in\mathbb{Z}_{\geq 1}}$ is a projective system of formal schemes, with affine transition maps, so the inverse limit exists in the category of formal schemes.  (See, for example, \cite[Proposition D.4.1]{fargues}.)   
This is a pro-finite \'etale cover of $\mathfrak{S}^{\mathrm{ord}}$, with Galois group $\Levin(\mathbb{Z}_p)$.

We now give a different way of thinking about the Igusa tower. Let $\mathfrak{A}_{\mathfrak{S}^{\mathrm{ord}}}$ be the universal abelian variety over $\mathfrak{S}^\mathrm{ord}$. For $p$-divisible groups over $\mathfrak{S}^{\mathrm{ord}}$ there is a connected-\'etale exact sequence, so it makes sense to define the connected part $\mathfrak{A}_{\mathfrak{S}^{\mathrm{ord}}}[p^\infty]^\circ$ of $\mathfrak{A}_{\mathfrak{S}^{\mathrm{ord}}}[p^\infty]$. Then the formal completion $\mathfrak{Ig}^{\mathrm{ord}}_n$ can be identified with the formal scheme $\mathrm{Isom}_{\mathfrak{S}^{\mathrm{ord}}}(\cL \otimes_{\mathbb{Z}}\mu_{p^n}, \mathfrak{A}_{\mathfrak{S}^{\mathrm{ord}}}[p^n]^\circ)$. Using the duality induced by the Hermitian pairing on $L_p$ and by $\lambda$ on $\mathfrak{A}_{\mathfrak{S}^{\mathrm{ord}}}[p^\infty]$ (and noting that duality interchanges the connected and \'etale parts), we can further identify $\mathfrak{Ig}^\mathrm{ord}_n$ with the formal scheme $\mathrm{Isom}_{\mathfrak{S}^{\mathrm{ord}}}\left(\cL^{\vee}/p^n\cL^{\vee}, \mathfrak{A}_{\mathfrak{S}^{\mathrm{ord}}}[p^n]^\mathrm{\et}\right)$. This is finite \'etale over $\mathfrak{S}^{\mathrm{ord}}$.

\subsubsection{Irreducibility}

In this section, we show that the Igusa tower $\{\Ig_n\}_{n \in \bN}$, or equivalently $\{\mathrm{Ig}_n^{\mathrm{ord}}\}_{n \in \bN}$, is not irreducible, but we also sketch how one can pass to a partial $SU$-tower that is irreducible.

As explained above, $\Ig_n$ can be identified with 
\begin{align}\label{Igusastr}
\mathrm{Isom}_{\mathfrak{S}^\mathrm{ord}}\left(\cL^{\vee}/p^n\cL^{\vee}, \mathfrak{A}_{\mathfrak{S}^{\mathrm{ord}}}[p^n]^\mathrm{\et}\right).
\end{align}
Such an isomorphism of sheaves on $\mathfrak{S}^\mathrm{ord}$ induces an isomorphism of the top exterior powers of these sheaves, so there is a morphism \[\det: \mathrm{Isom}_{\mathfrak{S}^\mathrm{ord}}\left(\cL^{\vee}/p^n\cL^{\vee}, \mathfrak{A}_{\mathfrak{S}^{\mathrm{ord}}}[p^n]^\mathrm{\et}\right)\to \mathrm{Isom}_{\mathfrak{S}^\mathrm{ord}}\left(\wedge^{\mathrm{top}}(\cL^{\vee}/p^n\cL^{\vee}), \wedge^{\mathrm{top}}(\mathfrak{A}_{\mathfrak{S}^{\mathrm{ord}}}[p^n]^\mathrm{\et})\right).\]
 Hida \cite{hidairreducibility} shows that the sheaf $\wedge^{\mathrm{top}}(\mathfrak{A}_{\mathfrak{S}^{\mathrm{ord}}}[p^n]^\mathrm{\et})$ on $\mathfrak{S}^{\mathrm{ord}}$ is isomorphic to the constant sheaf $\cO_K/p^n\cO_K$. This gives an isomorphism \[\mathrm{Isom}_{\mathfrak{S}^\mathrm{ord}}\left(\wedge^{\mathrm{top}}(\cL^{\vee}/p^n\cL^{\vee}), \wedge^{\mathrm{top}}(\mathfrak{A}_{\mathfrak{S}^{\mathrm{ord}}}[p^n]^\mathrm{\et})\right)\toisom (\cO_K/p^n\cO_K)^\times\] and shows that the full Igusa tower $\{\mathfrak{Ig}^{\mathrm{ord}}_n\}_{n\in \mathbb{N}}$ is not irreducible.

As $n$ varies, the determinant morphisms above are compatible. 
Let $\mathfrak{Ig}^{\mathrm{ord},SU}$ be the inverse image of $(1)_{n \in \bN} \in ((\cO_K/p^n\cO_K)^\times)_{n \in \bN}$ under $\det$. 
\begin{theorem}(Hida) $\mathfrak{Ig}^{\mathrm{ord},SU}$ is a geometrically irreducible component of $\Ig$.
\end{theorem}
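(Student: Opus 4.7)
The plan is to derive geometric irreducibility of $\mathfrak{Ig}^{\mathrm{ord},SU}$ from (a) the Galois interpretation of the Igusa tower, (b) the triviality of the determinant sheaf $\wedge^{\mathrm{top}}(\mathfrak{A}_{\mathfrak{S}^{\mathrm{ord}}}[p^n]^{\mathrm{\acute{e}t}}) \cong \mathcal{O}_K/p^n\mathcal{O}_K$ already recalled in the excerpt, and (c) Hida's monodromy theorem identifying the geometric monodromy of the tower with the ``special'' subgroup $S\Levin(\bZ_p) := \ker\bigl(\det : \Levin(\bZ_p) \to (\cO_K\otimes\bZ_p)^\times\bigr)$. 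The main obstacle, which we must import from \cite{hidairreducibility} rather than re-prove, is (c); the rest is formal.

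First I would fix, on the component $\mathfrak{S}^{\mathrm{ord}}$, a geometric point $\bar x$ above our chosen $\Witt$-point $x$, so that the tower $\{\mathfrak{Ig}^{\mathrm{ord}}_n\}_n \to \mathfrak{S}^{\mathrm{ord}}$ corresponds, via the usual equivalence between finite \'etale covers and continuous $\pi_1$-sets, to a continuous surjection
\begin{equation*}
\pi_1(\mathfrak{S}^{\mathrm{ord}},\bar x)\ \twoheadrightarrow\ \Levin(\bZ_p),
\end{equation*}
since the construction $S\mapsto \mathrm{Isom}_{\OK}(\cL^\vee/p^n\cL^\vee,\cA_S[p^n]^{\mathrm{\acute{e}t}})$ already exhibits $\mathfrak{Ig}^{\mathrm{ord}}$ as a pro-finite \'etale Galois cover with group $\Levin(\bZ_p)$. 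The geometrically connected components of $\mathfrak{Ig}^{\mathrm{ord}}$ above $\mathfrak{S}^{\mathrm{ord}}$ (which is itself geometrically connected by choice) are then the orbits of the geometric monodromy image on the fiber $\Levin(\bZ_p)$.

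Second, the sheaf-theoretic identification $\wedge^{\mathrm{top}}(\mathfrak{A}_{\mathfrak{S}^{\mathrm{ord}}}[p^n]^{\mathrm{\acute{e}t}}) \cong \underline{\cO_K/p^n\cO_K}$ quoted from \cite{hidairreducibility} shows that the determinant morphism factors through a \emph{constant} \'etale cover of $\mathfrak{S}^{\mathrm{ord}}$ with fiber $(\cO_K/p^n\cO_K)^\times$. Passing to the inverse limit, $\det$ realizes a morphism $\mathfrak{Ig}^{\mathrm{ord}}\to \underline{(\cO_K\otimes\bZ_p)^\times}\times\mathfrak{S}^{\mathrm{ord}}$, and by construction $\mathfrak{Ig}^{\mathrm{ord},SU}$ is the fiber over the identity section. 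This forces two things simultaneously: that $\mathfrak{Ig}^{\mathrm{ord},SU}$ is a closed formal subscheme, stable under the $S\Levin(\bZ_p)$-action, and that the geometric monodromy representation factors through $S\Levin(\bZ_p)\subset \Levin(\bZ_p)$ (because on each geometric fiber of $\mathfrak{S}^{\mathrm{ord}}$ the determinant is locally constant).

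Finally, to conclude I invoke Hida's monodromy theorem from \cite{hidairreducibility}: the image of the geometric fundamental group of $\mathfrak{S}^{\mathrm{ord}}$ in $\Levin(\bZ_p)$ is \emph{exactly} $S\Levin(\bZ_p)$. Combining this with the previous paragraph, the image of $\pi_1(\mathfrak{S}^{\mathrm{ord}}_{\ov{\bF}_p},\bar x)$ acts transitively on each geometric fiber of $\mathfrak{Ig}^{\mathrm{ord},SU}\to \mathfrak{S}^{\mathrm{ord}}$, so $\mathfrak{Ig}^{\mathrm{ord},SU}$ is geometrically connected. Since it is pro-finite \'etale over the smooth irreducible base $\mathfrak{S}^{\mathrm{ord}}$, geometrically connected implies geometrically irreducible, and the conclusion follows. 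The hard part, as noted, is Hida's monodromy theorem; the argument here merely packages it, together with the triviality of the top exterior power sheaf, into the stated component description.
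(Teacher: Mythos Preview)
Your argument is logically valid, but it differs from the paper's sketch in an important way: what you call ``Hida's monodromy theorem'' (that the geometric monodromy image equals $S\Levin(\bZ_p)$) is not an independent input but is \emph{equivalent} to the irreducibility of $\mathfrak{Ig}^{\mathrm{ord},SU}$ itself.  So your proof is really a reformulation of the statement in Galois-theoretic language followed by a citation, rather than an argument.  (There is also a minor slip: the Galois cover gives a homomorphism $\pi_1\to\Levin(\bZ_p)$, not a priori a surjection; you correct this implicitly in the next sentence when you speak of orbits of the image.)

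By contrast, the paper's sketch, while also ultimately deferring to \cite{hidairreducibility}, outlines the actual mechanism Hida uses to establish the monodromy.  Hida considers an action of a product $\cG_1\times\cG_2$ on the tower, with $\cG_1 = SU(\mathbb{A}^\Sigma)$ (adelic points away from bad places) fixing connected components, and $\cG_2 = \Levin(\bZ_p)\cap SU(\bZ_p)$ acting transitively on the fiber.  The key step is to choose a CM point $x_0$ whose stabilizer, together with $\cG_1$, generates a subgroup $\cT_{x_0}$ dense in $\cG_1\times\cG_2$; this density comes from the wealth of endomorphisms of CM abelian varieties and the density of $\Levin(\bZ_p)\cap SU(\bQ)$ in $\Levin(\bZ_p)\cap SU(\bZ_p)$.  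Since $\cT_{x_0}$ both fixes components and is dense in a group acting transitively on them, there can be only one component.  Your fundamental-group packaging is clean and conceptually useful, but if the goal is to convey \emph{why} the result is true, it would be strengthened by at least gesturing toward this density/CM argument rather than treating the monodromy computation as a black box.
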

\begin{proof} (sketch) One proof of this statement can be found in~{\cite[Sections 3.4-3.5]{hidairreducibility}}; we merely sketch the argument here. 

It is {sufficient to show} that $\mathfrak{Ig}^{\mathrm{ord},SU}_n$ is irreducible {for each $n$}. This is an \'etale cover of $\mathfrak{S}^{\mathrm{ord}}$, the formal completion of the smooth irreducible variety $\Sord$ over $\Witt$ along its special fiber. One of Hida's strategies {(in, for example, \cite[Section 3.4]{hidairreducibility})} for proving the irreducibility of the \'etale cover in this situation is {to consider} a compatible group action of a product $\cG_1\times \cG_2$ on $\mathfrak{Ig}^{\mathrm{ord},SU}_n$ and $\mathfrak{S}^{\mathrm{ord}},$
in such a way that $\cG_1\subset \mathrm{Aut}(\mathfrak{S}^{\mathrm{ord}})$ fixes and $\cG_2\subset \mathrm{Aut}(\mathfrak{Ig}^{\mathrm{ord},SU}_n/\mathfrak{S}^{\mathrm{ord}})$ acts transitively on the connected components of $\mathfrak{Ig}^{\mathrm{ord},SU}_n$. The group $\cG_1$ can be identified with the finite adelic points $SU(\mathbb{A}^\Sigma)$ away from certain bad places $\Sigma$ containing $p$. This group will not have any finite quotient and therefore will preserve the connected components of the Igusa tower. The group $\cG_2$ can be identified with the Levi subgroup $\Levi_1(\mathbb{Z}_p):= \Levin(\mathbb{Z}_p) \cap SU(\mathbb{Z}_p)$, where $\Levin(\mathbb{Z}_p)$ is as in Remark~\ref{explicitdecomposition}. The action of $\Levi_1(\mathbb{Z}_p)$ on the connected components is transitive, since $\mathfrak{Ig}^{\mathrm{ord},SU}_n/\mathfrak{S}^{\mathrm{ord}}$ is {a} $\Levi_1(\mathbb{Z}_p/p^n\mathbb{Z}_p)$-torsor via the action on the Igusa level structure.

Following Hida's argument, we choose a base point $x_0$ on the Igusa tower.
Hida considers the group $\cT_{{x_0}}$ generated by $\cG_1$ and the stabilizer of $x_0$ in $\cG_1\times\cG_2$.  In \cite[Section 3.5]{hidairreducibility}, he shows one can choose the point $x_0$ such that $\cT_{{x_0}}$ is dense in $\cG_1\times \cG_2$, which amounts to choosing a point whose stabilizer has $p$-adically dense image in $\Levi_1(\mathbb{Z}_p)$. This density is obtained as a by-product of the fact that the abelian variety with structures corresponding to the point $x_0$
has many extra endomorphisms over $\mathbb{Q}$ and the fact that $\Levi_1(\mathbb{Z}_p)\cap SU(\mathbb{Q})$ is dense in $\Levi_1(\mathbb{Z}_p)$.  On one hand, $\cT_{{x_0}}$ is dense in a group acting transitively on the connected components of $\mathfrak{Ig}^{\mathrm{ord},SU}_n$; on the other hand, $\cT_{{x_0}}$ fixes the connected components by definition. This shows that $\mathfrak{Ig}^{\mathrm{ord},SU}_n$ has only one connected component to start with and, therefore, that it is irreducible.  {In fact, Hida shows that one can take $x_0$ to be a CM point; this is the entire subject of \cite[Section 3.5]{hidairreducibility}.}

\end{proof}

\subsection{$p$-adic automorphic forms}\label{padicaut-section}

\newcommand{\frakIg}{\mathfrak{Ig}^\mathrm{ord}}
\renewcommand{\Ig}{\mathrm{Ig}^\mathrm{ord}}
In order to define $p$-adic automorphic forms, we define the global sections
	$$V_{n,m} = H^0(\Ig_{n,m},\cO_{\Ig_{n,m}}),$$
and let $V_{\infty,m}:=\varinjlim_n V_{n,m}$ and $V:=V_{\infty,\infty}:=\varprojlim_m V_{\infty,m}$.

The space $V$ is endowed with a left action of $\Levin(\zz_p)$, $f\mapsto g\cdot f$, induced by the natural right action of $g \in \Levin (\zz_p)$ on the Igusa tower by $g\cdot f :=g^*(f)=f\circ g$.   For any point $x=(x_n) \in (\Igusa_n(\Witt))_{n \in \bN}$, where $x_{n+1}$ maps to $x_n$ under the projection, if $\underline{A}=\underline{\cA}_{x}$ denotes the associated abelian scheme over $\Witt$ endowed with additional structures, and $\iota_x:\mu_{p^\infty}\otimes \cL\hookrightarrow A[p^\infty]$ denotes the Igusa structure of infinite level on $A$, then for any $g\in\Levin(\zz_p)$ the image of $x$ under the morphism $g $ is  the point $x^g=(x^g_n) \in (\Igusa_n(\Witt))_{n \in \bN}$ corresponding to the data of the abelian scheme {$\underline{A}$ (with the associated additional structures)}, together with the Igusa structure of infinite level $\iota_{x^g}=\iota_x\circ (1\otimes g)$.

\begin{defi}\label{defi-pforms}
We call 
$V^N :=V_{\infty,\infty}^{N(\bZ_p)}$ the {\it space of $p$-adic automorphic forms}. 
\end{defi}
It is worth noting that taking invariants by $N(\bZ_p)$ commutes with both direct and inverse limits, thus we could define $V^N$ also as $\varprojlim_m \varinjlim_n V_{n,m}^{N}$ with $V_{n,m}^{N}:=V_{n,m}^{N(\bZ_p)}$.

\begin{remark}\label{rmk-k2}
In Definition \ref{defi-pforms}, we have defined $p$-adic automorphic forms over the non-compactified Shimura variety. 
Typically, $p$-adic automorphic forms are defined over the compactified Shimura variety by constructing sheaves on any toroidal compactification of $\cM$ which descend to the minimal compactification of $\cM$ and are canonically identified with $\cE_{\cpct,\rho}$ when restricted to $\cM$.  (See \S8.3.5 of \cite{lan4}.) For the present paper, we are interested in local properties of automorphic forms, namely their local behavior at ordinary CM points.  Thus, compactifications have no bearing on the main results of this paper.  Furthermore, by Koecher's Principle (see Remark \ref{remark-Koecher}), so long as we are not in the case in which both $\Sigma$ consists of only one place and the signature is $(1, 1)$, both definitions agree.
\end{remark}

\subsubsection{Comparison of automorphic forms and $p$-adic automorphic forms.}\label{lcan}
We now construct an embedding of the global sections of automorphic vector bundles on the connected component $\Sord$ of the ordinary locus into $V^N$, our newly constructed space of $p$-adic automorphic forms.

Let $n\geq m$. Recall that each element $f\in V_{n, m}^N$ can be viewed as a function
\begin{align}\label{pformasrule-equ}
\left(\uA, j\right)\mapsto f\left(\uA, j\right)\in \Witt_m,
\end{align}
where $\uA$ consists of an abelian variety $A/\Witt_m$ together with a polarization, an endomorphism, and a level structure, and an $\cO_K$-linear isomorphism $j: A[p^n]^{\et}\isomto (\bZ/p^n\bZ)\otimes\lattice^\vee$.

Recall that each element $g$ in $\Levin(\bZ/p^n\bZ)$ acts on elements $f$ in $V_{n,m}$ via 
\begin{align*}
(g\cdot f)(\uA, j) := f(\uA, g j).
\end{align*}

We may view each element $f\in H^0\left(\Sord_{m}, \Ekap\right)$ as a function
\begin{align*}
(\uA, \lambdaiso)\mapsto f(\uA, \lambdaiso)\in \Ind_{B^-}^\Levin\left(-\kappa\right)_{\Witt_m},
\end{align*}
where $\uA$ is as in Equation \eqref{pformasrule-equ}, $\lambdaiso \in \Isom_{\cO_{\Sord_m}}(\bZ/p^n\bZ \otimes \cL^{\vee},\uo_{\cA/\Sord_m})$ and $\bA^1$ is the affine line, satisfying $f\left(\uA, g\lambdaiso\right) = \rho_\kappa\left(\left({ }^tg\right)^{-1}\right)f\left(\uA, \lambdaiso\right)$ for all $g\in\Levin\left(\Witt_m\right)$.  (Compare with Definition \ref{algauto-defi1}.) 
Equivalently, we may view $f$ as a function
\begin{align*}
	(\uA, \lambdaiso)\mapsto f(\uA, \lambdaiso)\in \Ind_B^\Levin\left(\kappa\right)_{\Witt_m} = \{ f: \Levin_{\Witt_m}/N_{\Witt_m} \rightarrow \bA^1_{\Witt_m} : f(ht) = \kappa(t)f(h), t \in T\}
\end{align*}
that satisfies $f\left(\uA, g\lambdaiso\right) = \rho\left(g\right)f\left(\uA, \lambdaiso\right)$ for all $g\in\Levin\left(\Witt_m\right)$, where
the action of $\Levin$ on $\Ind_B^{\Levin}(\kappa)_{\Witt_m}$ via $\rho$ is given by 
	$$\Levin \ni h: f(x) \mapsto \rho(h)f(x) = f(h^{-1}x).$$

For $n\geq m$ and $A$ ordinary over $\Witt_m$, we have 
\begin{align*}
\Lie A= \Lie A[p^n]^\circ.
\end{align*}
Thus, for the universal abelian variety $\Auniv$, we have (compare \cite[Section~3.3-3.4]{kaST})
\begin{align}\label{LieA-equ}
\uo_{\Auniv/\Sord_m} = (\Lie \Auniv)\dual = (\Lie \Auniv[p^n]^\circ)\dual \cong (T_p(\Auniv[p^n]^{\et})\dual)\dual \cong \Auniv[p^n]^{\et}\otimes\cO_{\Sord_{m}}.
\end{align}
By Equation \eqref{LieA-equ},
there is a canonical isomorphism 
\begin{align*}
\uo_{\cA/\Sord_m}\isomto \Auniv[p^n]^{\et}\otimes\cO_{\Sord_{m}}.
\end{align*}
Thus, we may view $j: A[p^n]^{\et}\isomto (\bZ/p^n\bZ)\otimes\lattice^\vee$ as an element of $\Isom_{\cO_{\Sord_m}}(\bZ/p^n\bZ \otimes \cL^{\vee},\uo_{\cA/\Sord_m})$ and each element $f\in H^0\left({\Sord_{m}}, \Ekap\right)$ yields a function
\begin{align*}
\left(\uA, j \right)\mapsto f\left(\uA, j \right)\in \Ind_B^\Levin\left(\kappa\right)_{\Witt_m},
\end{align*}
where $\uA$ and $j$ are as in Equation \eqref{pformasrule-equ}, satisfying $f\left(\uA, gj\right) = \rho\left(({}^tg)^{-1}\right)f\left(\uA, j\right)$ for $g \in \Levin(\bZ/p^n\bZ)$ (due to the action of $g$ on $\lambdaiso$ by precomposition with ${}^tg$, {as described in} Section \ref{classical-aut-forms}).

As explained in \cite[Section 8.1.2]{hida}, there is a unique (up to $\Witt$-unit multiple) $N$-invariant element $\ellcan\in \left(\Ind_B^\Levin(\kappa)_\Witt\right)\dual = \Hom_{\Witt}(\Ind_B^\Levin(\kappa)_\Witt,\Witt)$.  The element $\ellcan$ generates $\left(\left(\Ind_B^\Levin(\kappa)_\Witt\right)\dual\right)^N$.  We may normalize $\ellcan$ so that it is evaluation at the identity in $\Levin$.

Now, we define a map of functions that turns out to be a map of global sections
\begin{align*}
\Psi_{n,m}: H^0\left({\Sord_{m}}, \Ekap\right)&\quad \rightarrow \quad V_{n,m}^N[\kappa]\\
f&\quad \mapsto \quad \tilde{f}: (\uA, j) \mapsto \ellcan(f(\uA, j)),
\end{align*}
where $V_{n,m}^N[\kappa]$ denotes the $\kappa$-eigenspace of the torus.
Note that for each $b\in B(\bZ/p^n\bZ)$,
\begin{align*}
(b\tilde{f})(\uA, j) &= \tilde{f}(\uA, bj)\\
 &= \ellcan(f(\uA, bj))\\
& = f(\uA, bj)(1)\\
&=\rho\left(({}^tg)^{-1}\right) f(\uA, j)(1)\\
& = f(\uA, j)({}^tb)\\
&= \kappa(b)f(\uA, j)(1)=\kappa(b)\ellcan(f(\uA, j)),
\end{align*}
for all $\uA$ and $j$ as in Equation \eqref{pformasrule-equ}.
So $\tilde{f}$ is indeed in $V_{n,m}^N[\kappa] \subset V_{n,m}$.

We therefore have a map
\[\Psi_\kappa:H^0\left({\Sord}, \Ekap\right)\rightarrow V^N[\kappa],\]
where $V^N[\kappa]$ denotes the $\kappa$-eigenspace of the torus,
{and we define 
	$$\Psi=\oplus_{\kappa}\Psi_\kappa: \oplus_\kappa H^0(\Sord,\Ekap) \ra V^N.$$ }

This map yields an embedding.
\begin{proposition}{\cite[Section 8.1.3, p. 335]{hida}}\label{ten} 
	The map $\Psi$ is injective.  
\end{proposition}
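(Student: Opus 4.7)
The plan is to reduce to showing each $\Psi_\kappa$ is injective separately, using the $T(\bZ_p)$-weight decomposition: since $\Psi_\kappa(f)$ lies in the $\kappa$-eigenspace $V^N[\kappa]$ and distinct weight spaces intersect trivially, $\Psi = \bigoplus_\kappa \Psi_\kappa$ is injective as soon as each summand is. Because $\Sord \to \Spec \Witt$ is flat, $H^0(\Sord, \Ekap)$ is $p$-torsion free, so it suffices to prove each $\Psi_\kappa$ injective after inverting $p$.

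Fix $\kappa$. The Igusa tower $\frakIg \to \mathfrak{S}^{\mathrm{ord}}$ is a pro-$\Levin(\bZ_p)$-torsor, and the canonical identification $\uo_{\cA/\mathfrak{S}^{\mathrm{ord}}_m} \cong \cA[p^n]^{\mathrm{et}} \otimes \cO$ for $n \geq m$ (see Equation \eqref{LieA-equ}) shows that an Igusa structure $j$ induces a canonical trivialization $\lambdaiso_j$ of the frame bundle $\cE$. Consequently the pullback of $\Ekap$ to $\frakIg$ is canonically $\cO_\frakIg \otimes V_\kappa$ with $V_\kappa := \Ind_B^\Levin(\kappa)$, and (formal) descent identifies sections of $\Ekap$ with $\Levin(\bZ_p)$-equivariant maps $\phi : \frakIg \to V_\kappa$. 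Under this translation, $\Psi_\kappa(f)$ is simply $\ellcan \circ \phi_f$.

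Suppose now that $\Psi_\kappa(f) = 0$, i.e.\ $\ellcan(\phi_f(x)) = 0$ for every $x \in \frakIg$. Evaluating at the translates $g \cdot x$ for $g \in \Levin(\bZ_p)$ and using $\Levin(\bZ_p)$-equivariance of $\phi_f$ yields $\ellcan(\rho(g)\phi_f(x)) = 0$ for every $g \in \Levin(\bZ_p)$. The crux is the algebraic claim that the $\Levin(\bZ_p)$-translates of $\ellcan$ span $V_\kappa\dual$ over $\bQ_p$: since $V_\kappa$ is an irreducible algebraic representation of $\Levin$ in characteristic zero, its dual is also irreducible, and hence the nonzero functional $\ellcan$ generates $V_\kappa\dual$ under the $\Levin(\bQ_p)$-action; moreover $\Levin(\bZ_p)$ is Zariski dense in $\Levin_{\bQ_p}$, so the $\Levin(\bZ_p)$-span already equals the $\Levin(\bQ_p)$-span. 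Therefore $\phi_f(x) = 0$ for every $x \in \frakIg$, and faithful flatness of $\frakIg \to \mathfrak{S}^{\mathrm{ord}}$ then forces $f = 0$.

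The step I expect to require the most care is verifying the compatibility between the right $\Levin(\bZ_p)$-action on $\frakIg$ (acting on Igusa structures by precomposition) and the action of $\Levin(\bZ_p)$ on $\lambdaiso_j$ appearing in the transformation rule of Definition \ref{algauto-defi1}, so that the $p$-adic form $\Psi_\kappa(f)$ really does transform through $\rho$ as claimed. Once this bookkeeping is settled, the combination of irreducibility of $V_\kappa$ and Zariski density of $\Levin(\bZ_p) \subset \Levin_{\bQ_p}$ provides the only nontrivial algebraic content.
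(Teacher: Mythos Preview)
The paper offers no proof of its own here; it simply cites \cite[Section 8.1.3]{hida}. Your argument is the standard one and is correct in outline: reduce to a single $\kappa$ via the torus eigenspace decomposition, pull $f$ back to the Igusa tower where $\Ekap$ trivializes, and use that the $\Levin(\bZ_p)$-translates of $\ellcan$ detect the full section.

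One small caution about the phrasing. You invert $p$ and then argue pointwise at $x\in\frakIg$, but points of the formal scheme $\frakIg$ live over rings in which $p$ is nilpotent, so ``$\phi_f(x)\otimes\bQ_p$'' is literally zero. The clean way to run your argument is globally: view $\phi_f\in V\otimes_{\bZ_p}V_\kappa$, use $\Levin(\bZ_p)$-equivariance to get $(\mathrm{id}\otimes\rho(g)^\vee\ellcan)(\phi_f)=0$ for all $g\in\Levin(\bZ_p)$, and then note that since the $\bZ_p$-span of these functionals has finite index in $V_\kappa^\vee$ (by your irreducibility and Zariski-density argument over $\bQ_p$), one obtains $p^k\phi_f=0$ for some fixed $k$; finally $V$ is $p$-torsion free. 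Alternatively, and closer to what Hida actually does, you can bypass inverting $p$ entirely by exploiting the explicit model $V_\kappa=\Ind_B^\Levin(\kappa)$: then $\ellcan$ is evaluation at $1\in\Levin$, so its $\Levin(\bZ_p)$-translates are evaluations at all $\bZ_p$-points, and a regular function on $\Levin_{\bZ_p}$ vanishing on $\Levin(\bZ_p)$ is already zero integrally (since $\cO(\Levin_{\bZ_p})$ is $p$-torsion free and $\Levin(\bZ_p)$ is Zariski dense in $\Levin_{\bQ_p}$). Either route closes the argument; the bookkeeping you flag about matching the two $\Levin(\bZ_p)$-actions is indeed the only place requiring care.
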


\begin{remark}
While it is not the subject of this paper, it is natural to ask about results on the density of classical automorphic forms within the space of $p$-adic automorphic forms. Because our main theorems do not use such density results in our arguments, we refer the reader to \cite[Chapter 8]{hida} and \cite[Lemma 6.1]{HLTT}. Additionally, it is also true by \cite[Theorem IV.3.1]{scholze} that classes in the completed cohomology of Shimura varieties are also $p$-adically interpolated from classical automorphic forms. This statement is stronger than all previous results, since it also applies to torsion classes which contribute to completed cohomology. 
\end{remark}

\section{Serre-Tate expansions}\label{ST-section} 
The goal of this section is to establish a $p$-adic analogue of the $q$-expansion principle for automorphic forms as a consequence of Hida's irreducibility result for the Igusa tower.

Classically, $q$-expansions arise by localization at a cusp, i.e., the $q$-expansion of a scalar-valued form $f$ is the image of $f$ in the complete local ring at the cusp, regarded as a power series in $q$, for $q$ a canonical choice of the local parameter at the cusp.  In these terms, the $q$-expansion principle states that localization is injective, and it is an immediate consequence of the fact that the space is connected. Alternatively, when working over the whole Shimura variety, it becomes necessary to choose a cusp on each connected component, and consider all localizations at once.

In this paper, we work over a connected component of $\cM$, and we replace cusps with integral ordinary CM points (i.e. points of $\Sord$ defined over $\Witt$ corresponding to abelian varieties with complex multiplication).
The crucial observation is that given an integral ordinary CM point $x_0$, the choice of a lift $x$ of $x_0$ to the Igusa tower uniquely determines a choice of \ST local parameters at $x_0$, i.e. $x$ defines an isomorphism of the $p$-adic completion of the complete local ring at $x_0$ with a power series ring over $\Witt$.  We call the power series corresponding to the localization at $x$ of an automorphic form its $t$-expansion, for $t$ denoting the \ST local parameters.

{By} abuse of notation, we denote by $g:\Igusa\rightarrow \Igusa$ the action of $g\in\Levin(\zz_p)$ on the Igusa tower described in Section \ref{padicaut-section}, and we write $\otimes$ in place of $\otimes_{\zz_p}$ for the tensor product over $\zz_p$.
 
\subsection{Localization}\label{local-section}
Let ${\bar x}_0\in\Sord(\fpb)$ be a geometric point, and let $x_0\in \Sord(\Witt)$ be any integral lift of ${\bar{x}}_0$. (Without loss of generality, we may chose $x_0$ to be a CM point, or even the canonical CM lift of ${\bar{x}}_0$; see Remark \ref{CMlift}.)  We write ${\Sord}^\wedge_{x_0}$  for the formal completion of $\Sord$ at $x_0$. Then \[{\Sord}^\wedge_{x_0}={\rm Spf}( \Ring_{\Sord, x_0})\] where $\Ring_{\Sord, x_0}$ is a $p$-adically complete local ring.

More explicitly, $\Ring_{\Sord, x_0}$ can be constructed as follows. For each $m\geq 1$, we write $x_{0,m}$ for the reduction of $x_0$ modulo $p^m$, regarded as a point of $\cS_m^{\rm ord}:=\Sord\times_\Witt \Witt/p^m\Witt$ (in particular, ${\bar{x}}_0=x_{0,1}$). Let $\cO^{\wedge}_{\cS^{\rm ord}_m,x_0}$ denote the completed local ring of $\cS^{\rm ord}_m$ at $x_{0,m}$.
Then, the  local ring $\Ring_{\Sord,x_0}$ can be identified with $\varprojlim_m \cO^\wedge_{\cS^{\rm ord}_m,x_0}$. Alternatively,  $\Ring_{\Sord,x_0}$ can also be identified with $\cO^{\wedge}_{\Sord,{\bar{x}}_0}$, the completed local ring of $\Sord$ at ${\bar x}_0$.

Let $x\in \Igusa(\Witt)$ denote a compatible system of integral points $x=\left(x_n\right)_{n\geq 0}$ on the Igusa tower $\{\Ig_n\}_{n\geq 0}$ above $x_0$; i.e. for each $n\geq 0$, $x_n$ is an integral point in $\Igusa_n$, $\Igusa_0=\Sord$, with $x_n$ mapping to $x_{n-1}$ under the natural projections.  Given the point $x_0$ of $\Sord$, the choice of a point $x$ of $\Igusa$ lying above $x_0$ is equivalent to the choice of an Igusa structure of infinite level (i.e.  of compatible closed immersions as in Equation \eqref{equation-Igusa-structure} 
 on the corresponding ordinary abelian variety.

For all $m$, as $n$ varies, the natural finite \'etale projections $j=j_{n,m}:\Igusa_{n, m}\rightarrow \cS^{\rm ord}_m$ induce  a compatible system of isomorphisms
$$j^*_{x}: \cO^{\wedge}_{\cS^{\rm ord}_m,x_0}\isomto\cO^{\wedge}_{\Igusa_{n ,m},x_n}$$
 which allow us to canonically identify $\cO^\wedge_{\Ig,x}:=\varprojlim_m \varinjlim_n \cO^{\wedge}_{\Igusa_{n,m},x_{n}}$ with $\Ring_{\Sord,x_0}$.

\begin{remark}
Given $x_0\in \Sord (\Witt)$, let $\bar{x}_0\in\Sord(\fpb)$ denote its reduction modulo $p$. We observe that, as a consequence of the fact that the morphisms in the Igusa tower are \'etale, the reduction modulo $p$ gives a canonical bijection between the points on the Igusa tower above $x_0$ and those above ${\bar{x}}_0$.   Moreover,  if  we denote by $\bar{x}\in \Ig(\fpb)$, $\bar{x}=(\bar{x}_n)_{n\geq 0}$, the reduction of $x$ modulo $p$, then the previous isomorphisms agree with the compatible system of isomorphisms
 $ j_{n,{\bar x}_n}^*: \cO^{\wedge}_{\Sord,{\bar{x}}_0}\rightarrow \cO^{\wedge}_{\Igusa_{n},\bar{x}_{n}} $.
\end{remark}

\begin{defn}
Let $x\in\Igusa(\Witt)$, lying above $x_0\in\Sord(\Witt)$. We define \[\loc_x:V\to \Ring_{\Sord,x_0}\] as the localization at $x$ composed with ${j^*_x}^{-1}$. 

\end{defn}

By abuse of language,  we will still refer to $\loc_x(f)\in \Ring_{\Sord,x_0}$ as the localization of $f$ at $x$, for all $f\in V$.  Furthermore, with abuse of notation,  we will still denote  by $\loc_x$ the restriction of $\loc_x$ to $V^N$ (resp. $V^N[\kappa]$, for any weight $\kappa$).

We compare localizations at different points of the Igusa tower. Recall that given a point $x \in \Igusa(\Witt)$, the set of all points of the Igusa tower above $x_0=j(x)\in\Sord(\Witt)$ is a principle homogeneous space for the action of $\Levin(\zz_p)$.

\begin{lemma} \label{Leviaction}  
Let $x\in\Igusa(\Witt)$.
For any $g\in\Levin(\zz_p)$ and $f\in V$, we have \[\loc_{x^g}(f)=\loc_{x}\left(g\cdot f \right).\]

In particular, if $g\in T(\zz_p)$ and $f\in V^N[\kappa]$, for a weight $\kappa$, we have \[\loc_{x^g}(f)=\kappa(g)\loc_{x}\left(f\right).\] 
\end{lemma}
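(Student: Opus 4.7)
The plan is to reduce the statement to the fact that each $g \in \Levin(\zz_p)$ acts on $\Ig$ as a deck transformation, i.e., as an automorphism over $\Sord$. This is immediate from the moduli-theoretic description recalled in Section~\ref{padicaut-section}: the points $x$ and $x^g$ both lie above the common point $x_0 = j(x) = j(x^g) \in \Sord(\Witt)$, since they correspond to the same ordinary abelian variety $\uA$ equipped with the Igusa structures $\iota_x$ and $\iota_{x^g} = \iota_x \circ (1 \otimes g)$, respectively. In particular, $j \circ g = j$ as morphisms $\Ig \to \Sord$.

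Passing to formal completions at $x^g$ and $x$, the equality $j \circ g = j$ induces the commutative triangle
\[
j_x^* \;=\; g^* \circ j_{x^g}^* \;:\; \cR_{\Sord, x_0} \isomto \cO^{\wedge}_{\Ig, x},
\]
where $g^* : \cO^{\wedge}_{\Ig, x^g} \isomto \cO^{\wedge}_{\Ig, x}$ is the pullback along $g$ on completed local rings. Writing $f|_{x^g} \in \cO^{\wedge}_{\Ig, x^g}$ for the image of $f \in V$ under localization at $x^g$, and observing that $(g \cdot f)|_x = g^{*}(f|_{x^g})$ by the very definition $g \cdot f = f \circ g$ together with the compatibility of localization with pullback, we obtain
\[
\loc_x(g \cdot f) \;=\; (j_x^*)^{-1}\!\bigl(g^{*}(f|_{x^g})\bigr) \;=\; (j_{x^g}^*)^{-1}(f|_{x^g}) \;=\; \loc_{x^g}(f),
\]
where the middle equality uses the preceding display. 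This proves the first assertion.

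The second assertion is then a direct consequence. Recall from Section~\ref{lcan} that the $\kappa$-eigenspace condition $f \in V^N[\kappa]$ amounts to $b \cdot f = \kappa(b)\, f$ for every $b \in T(\zz_p)$. Combining this with the $\Witt$-linearity of $\loc_x$ and the first assertion gives
\[
\loc_{x^g}(f) \;=\; \loc_x(g \cdot f) \;=\; \loc_x\bigl(\kappa(g)\, f\bigr) \;=\; \kappa(g)\, \loc_x(f),
\]
as claimed. I do not anticipate any real obstacle: the entire content of the lemma is packaged in the single identity $j \circ g = j$, i.e.\ in the fact that $\Levin(\zz_p)$ is the Galois group of the pro-\'etale cover $\Ig / \mathfrak{S}^{\mathrm{ord}}$, together with the compatibility of $\loc_x$ with pullback and scalar multiplication.
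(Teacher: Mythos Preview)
Your proof is correct and follows essentially the same approach as the paper: both arguments rest on the identity $j\circ g = j$ (equivalently ${j^*_{x^g}}^{-1}={j^*_{x}}^{-1}\circ g^*$), together with the definition $g\cdot f = g^*f$ and the eigenspace condition $g\cdot f = \kappa(g)f$ for the second part. You have simply spelled out in more detail why the identity holds from the moduli description, which the paper leaves implicit.
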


\begin{proof}
For all $g\in\Levin(\zz_p)$ and $x\in \Igusa(\Witt)$, the morphism $g:\Igusa \rightarrow \Igusa$ induces an isomorphism of complete local rings $g^*:\cO^\wedge_{\Igusa, x^g}\rightarrow \cO^\wedge_{\Igusa, x}$, $\phi\mapsto \phi\circ g$. Then, the first statement follows from the definition given the equality ${j^*_{x^g}}^{-1}={j^*_{x}}^{-1}\circ g^*$. The second statement is an immediate consequence of the first one, given that for all $f\in V^N[\kappa]$ and $g\in T(\zz_p)$ we have $g\cdot f=\kappa(g)f$.
\end{proof}

\begin{prop}\label{propq}
Let $x\in\Igusa (\Witt)$. For any weight $\kappa$, the map ${\loc}_x:V^N[\kappa]\rightarrow \Ring_{\Sord,x_0}$ is injective.
\end{prop}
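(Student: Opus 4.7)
The plan is to use Lemma \ref{Leviaction} together with Hida's irreducibility theorem to reduce the injectivity of $\loc_x$ to a statement about vanishing on each irreducible component of the Igusa tower. Suppose $f \in V^N[\kappa]$ satisfies $\loc_x(f) = 0$. I will show that $f_{n,m} = 0$ in $V_{n,m}$ for every $n \geq m \geq 1$, which yields $f = 0$ after passing to the limits defining $V$.

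First, I would spread the vanishing of $f$ at $x$ to a vanishing at many points of the Igusa tower. By Lemma \ref{Leviaction}, for every $g \in T(\bZ_p)$ we have $\loc_{x^g}(f) = \kappa(g)\loc_x(f) = 0$. Hence $f_{n,m}$ vanishes in the formal completion at $x_n^g$ for every $g \in T(\bZ_p)$ and every $n$. Recalling from Remark \ref{explicitdecomposition} that $T(\bZ_p) \subset \Levin(\bZ_p)$ contains the diagonal torus of each factor $\GL_{\siga_{\tau}}(\cO_{K_{\mathfrak{P}_i}}) \times \GL_{\sigb_{\tau}}(\cO_{K_{\mathfrak{P}_i^c}})$, the determinant map $T(\bZ_p) \to (\cO_K \otimes \bZ_p)^{\times}$ is surjective, and hence surjects onto $(\cO_K/p^n\cO_K)^{\times}$.

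Next, I would invoke Hida's description of the connected components. By the discussion preceding Hida's theorem, the connected components of $\Ig_{n,m}$ are indexed by $(\cO_K/p^n\cO_K)^{\times}$ via the determinant of the Igusa structure, and the action of $g \in \Levin(\bZ_p)$ sends the component of label $\delta$ to the component of label $\det(g)\cdot \delta$. Combining this with the previous paragraph, the orbit $\{x_n^g : g \in T(\bZ_p)\}$ meets every connected component of $\Ig_{n,m}$. Hida's irreducibility theorem guarantees that each such component is geometrically irreducible (it is an $SU$-Igusa component up to $T$-translation), and it is smooth since $\Ig_{n,m} \to \Sord_m$ is finite \'etale and $\Sord_m$ is smooth.

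Finally, I would conclude by the standard fact that on a smooth, irreducible (hence integral) scheme, a regular function vanishing in the completed local ring at a single point must vanish globally. Applied componentwise, the vanishing in the formal neighborhood of one point of each component forces $f_{n,m} \equiv 0$ on $\Ig_{n,m}$. Taking the limit over $n,m$ then gives $f = 0$ in $V$. The main obstacle I anticipate is bookkeeping: verifying that the determinant character from $T(\bZ_p)$ really surjects onto the component group as indexed by Hida's description, and checking that the passage from "vanishing on the formal completion" to "vanishing on the whole irreducible component" is legitimate at each finite truncation level $(n,m)$ rather than only after taking limits.
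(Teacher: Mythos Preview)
Your proposal is correct and follows essentially the same approach as the paper's proof: both use Lemma \ref{Leviaction} to propagate the vanishing of $\loc_x(f)$ to $\loc_{x^g}(f)$ for all $g\in T(\bZ_p)$, then invoke the transitivity of the $T(\bZ_p)$-action on the connected components (the paper states this directly, you justify it via surjectivity of the determinant) together with Hida's irreducibility of each component. Your version simply spells out more of the underlying bookkeeping (the surjection $T(\bZ_p)\to(\cO_K/p^n\cO_K)^\times$, the passage from formal vanishing to global vanishing on an integral component, and the limit over $n,m$) that the paper leaves implicit.
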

\begin{proof} 
Were the Igusa covers $\Ig_n$ irreducible over $\Sord$, the statement would immediately follow.  As it happens $\Ig_n$ is not irreducible, thus a priori  the vanishing under the localization map $\loc_x$ only implies the vanishing on the connected component containing $x$. Yet, as the torus $T(\zz_p)\subset \Levin(\zz_p)$ acts transitively on the connected components of $\Ig$ over $\Sord$, it suffices to prove that for all $f\in V^N[\kappa]$, the identity $\loc_x(f)=0$ implies $\loc_{x^g}(f)=0$ for all $g\in T(\zz_p)$. The last statement follows immediately from the second part of Lemma \ref{Leviaction}.
\end{proof}

We note that for a general function $f\in V$ the above statement is false. Yet, by the same argument, Lemma \ref{Leviaction} implies the following weaker statement for all $f\in V$.

\begin{prop}\label{generalq}
Let $x\in \Igusa (\Witt)$, lying above $x_0\in\Sord (\Witt)$. For any $f\in V$,  if $\loc_{x}(g\cdot f)\in \Ring_{\Sord,x_0}$ vanishes for all $g\in T(\zz_p)$, then $f=0$.
\end{prop}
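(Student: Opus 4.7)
The plan is to imitate the argument for Proposition \ref{propq}, this time invoking the \emph{first} identity of Lemma \ref{Leviaction} (which is valid for arbitrary $f\in V$) in place of the second identity (which requires $f$ to be a $T$-eigenvector). In particular, no new geometric input beyond what already drives Proposition \ref{propq} should be needed.

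First I would use Lemma \ref{Leviaction} to rewrite the hypothesis: for every $g\in T(\zz_p)$, the identity $\loc_x(g\cdot f)=\loc_{x^g}(f)$ shows that the assumption on $f$ is equivalent to
\[
\loc_{x^g}(f)=0\qquad\text{for all }g\in T(\zz_p).
\]
Equivalently, the germ of $f$ vanishes at every point of the $T(\zz_p)$-orbit of $x$ on $\Igusa$.

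Next I would feed this into the same geometric mechanism used in the proof of Proposition \ref{propq}. Each connected component of $\Igusa$ over $\Sord$ is geometrically irreducible (by Hida's theorem recalled in Section \ref{Igusalevel}), and $\Igusa$ is pro-\'etale over the formal completion of the smooth $\Witt$-scheme $\Sord$. Consequently, the ring of global functions on any such connected component injects into the complete local ring at any of its integral points, so for each fixed $g\in T(\zz_p)$, the vanishing of $\loc_{x^g}(f)$ forces $f$ to vanish on the connected component of $\Igusa$ containing $x^g$.

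Finally, since $T(\zz_p)$ acts transitively on the set of connected components of $\Igusa$ over $\Sord$ -- the key property already exploited in the proof of Proposition \ref{propq} -- the orbit $\{x^g:g\in T(\zz_p)\}$ meets every connected component, and therefore $f$ vanishes identically on $\Igusa$, i.e.\ $f=0$ in $V$. The only subtle ingredient is the injectivity of localization on a single irreducible component of the Igusa tower; this is precisely the step that is implicit in the proof of Proposition \ref{propq}, so the generalization from $V^N[\kappa]$ to arbitrary $f\in V$ introduces no new technical difficulty, provided one is willing to vary the base point $x$ over the $T(\zz_p)$-orbit.
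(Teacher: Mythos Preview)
Your proof is correct and follows essentially the same approach as the paper. The paper does not write out a separate proof for this proposition; it simply remarks that ``by the same argument'' (i.e.\ the argument of Proposition \ref{propq}), Lemma \ref{Leviaction} implies this weaker statement for arbitrary $f\in V$, which is exactly what you have spelled out: use the first identity of Lemma \ref{Leviaction} to convert the hypothesis into $\loc_{x^g}(f)=0$ for all $g\in T(\zz_p)$, then combine irreducibility of each connected component with transitivity of the $T(\zz_p)$-action on components.
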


\subsection{Serre-Tate coordinates}\label{\ST thy}
It follows from the smoothness of $\Sord$ that for any $\Witt$-point $x_0$ the ring $\Ring_{\Sord,x_0}$ is (non-canonically) isomorphic to a power series ring over $\Witt$.
The goal of this section is to explain how Serre-Tate theory implies that for $x$ a lift of $x_0$ to $\Ig$,
the ring $\cO^\wedge_{\Ig,x}$ is canonically isomorphic to a ring of power series over $\Witt$.

We recall Serre-Tate theory following \cite{kaST}.
The first theorem describes the deformation space of an ordinary abelian variety, and the second explains how to address the lifting of additional structures (such as a polarization and extra endomorphisms). As an application we deduce a description of the ring $\Ring_{\Sord,x_0}$, for any $x_0\in \Sord(\Witt)$.

We introduce some notation.
Let $A$ be an ordinary abelian variety over $\fpb$, of dimension $g$.  The {\em physical Tate module} of $A$, $T_pA(\fpb)$, is the Tate module of the maximal \'etale quotient of $A[p^\infty]$, i.e $$T_pA(\fpb)=\varprojlim_nA[p^n](\fpb)=\varprojlim_nA[p^n]^{\et}(\fpb).$$ As $A$ is ordinary, $T_pA(\fpb)$ is a free $\bZ_p$-module of rank $g$.  Like above, we denote by $A^\vee$ the dual abelian variety, and we denote by $T_pA^\vee(\fpb)$ the physical Tate module of $A^\vee$.

Let $R$ be an Artinian local ring, with residue field $\fpb$. We denote by $\mathfrak{m}_R$ the maximal ideal of $R$. A {\em lifting} (or {\em deformation}) of $A$ over $R$ is a pair $(\cA/R,j)$,
consisting of an abelian scheme $\cA$ over $R$, together with an isomorphism $j:\cA\otimes_R \fpb\rightarrow A$.
By abuse of notation we sometimes simply write $\cA/R$ for the pair $(\cA/R,j)$.  To each lifting $(\cA/R,j),$
as explained in \cite[Section 2.0, p. 148]{kaST}, Serre and Tate associated a $\bZ_p$-bilinear form
$$q_{\cA/R} :T_pA(\fpb)\times T_pA^\vee(\fpb)\rightarrow \hat{\bG}_m(R)=1+\mathfrak{m}_R.$$

\begin{theorem} [\cite{kaST} Theorem 2.1, p. 148]\label{ST-theorem}
Let the notation be as above.
\begin{enumerate}
\item The map $(\cA/R,j)\mapsto q_{\cA/R}$  is a bijection from the set of isomorphism classes of liftings of $A$ over $R$ to the group
${\rm Hom}_{\bZ_p}(T_pA(\fpb)\otimes T_pA^\vee(\fpb),\hat{\bG}_m(R))$.
\item
The above construction defines an isomorphism of functors between the deformation space $\cM_{A/\fpb}$ and
${\rm Hom}_{\bZ_p}(T_pA(\fpb)\otimes T_pA^\vee(\fpb),\hat{\bG}_m)$.
\end{enumerate}
\end{theorem}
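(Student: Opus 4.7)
The plan is to reduce the statement to the deformation theory of the $p$-divisible group $A[p^\infty]$. The first input I would invoke (logically prior to the theorem under consideration, so assumed from the literature) is the Serre--Tate rigidity principle: for an Artin local ring $R$ with residue field $\fpb$, the functor $(\cA/R,j) \mapsto (\cA[p^\infty],\,j|_{p^\infty})$ is an equivalence between lifts of $A$ and lifts of $A[p^\infty]$. This rests on the fact that the kernel of reduction on $\cA$ is captured by the formal group, hence by $\cA[p^\infty]^\circ$, under nilpotent thickenings. Granting this, the problem becomes: classify lifts of the ordinary $p$-divisible group $A[p^\infty]$ to $R$.

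Next I would analyze the connected--\'etale sequence
\begin{equation*}
0 \to A[p^\infty]^\circ \to A[p^\infty] \to A[p^\infty]^{\et} \to 0.
\end{equation*}
Because $A$ is ordinary, both the outer terms have height $g$: the \'etale quotient is the constant group $T_pA(\fpb)\otimes_{\bZ_p}(\bQ_p/\bZ_p)$, and the connected part, being of multiplicative type, is by Cartier duality canonically isomorphic to $T_pA^\vee(\fpb)\otimes_{\bZ_p}\mu_{p^\infty}$. I would then observe that both kinds of $p$-divisible groups admit unique deformations to any Artin local $R$ with residue field $\fpb$ (for \'etale ones by topological invariance of the \'etale site; for multiplicative ones by Cartier dualizing that statement). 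Therefore every lift of $A[p^\infty]$ to $R$ is canonically an extension, in the category of $p$-divisible groups over $R$, of the canonical \'etale lift by the canonical multiplicative lift.

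Consequently the set of lifts of $A[p^\infty]$ to $R$ is identified with
\begin{equation*}
\mathrm{Ext}^1_R\bigl(T_pA(\fpb)\otimes_{\bZ_p}(\bQ_p/\bZ_p),\; T_pA^\vee(\fpb)\otimes_{\bZ_p}\mu_{p^\infty}\bigr).
\end{equation*}
Applying $\Hom_R(-,\,T_pA^\vee(\fpb)\otimes\mu_{p^\infty})$ to $0 \to \bZ_p \to \bQ_p \to \bQ_p/\bZ_p \to 0$ and using that $\Hom_R(\bQ_p,\mu_{p^\infty}(R))=0$ when $\mathfrak{m}_R$ is $p$-adically nilpotent converts this $\mathrm{Ext}^1$ into $\Hom_{\bZ_p}(T_pA(\fpb)\otimes T_pA^\vee(\fpb),\hat{\bG}_m(R))$. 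The final step of (1) is to verify that the bilinear pairing $q_{\cA/R}$ defined in \cite{kaST} is exactly the boundary map of this long exact sequence evaluated on $T_pA(\fpb)\times T_pA^\vee(\fpb)$; this is a matching-of-definitions check involving how the extension class of $\cA[p^\infty]$ is read off by lifting points of $T_pA(\fpb)$ and pairing against $T_pA^\vee(\fpb)$. Part (2) then follows because every step in the construction is functorial in $R$.

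The main obstacle is the very first reduction: the equivalence of deformation functors for $A$ and $A[p^\infty]$, which is the substantive content of Serre--Tate theory and would have to be cited rather than reproved. Once that reduction is in hand, the proof is essentially formal bookkeeping --- unique liftability of \'etale and multiplicative $p$-divisible groups, a one-step $\mathrm{Ext}^1$ computation, and reconciliation of the explicit description of $q_{\cA/R}$ with the connecting homomorphism.
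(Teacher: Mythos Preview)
The paper does not give a proof of this theorem at all: it is simply quoted from Katz \cite{kaST} as background and then used as a black box (the next line after the statement reads ``In the following we refer to the above isomorphism as the {\it \ST isomorphism}''). So there is no in-paper proof to compare your proposal against.

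That said, your outline is essentially the standard argument, and in particular it is the one Katz gives in the cited reference. The three steps you isolate --- the Serre--Tate equivalence between deformations of $A$ and of $A[p^\infty]$, the unique liftability of the \'etale and multiplicative pieces of an ordinary $p$-divisible group, and the identification of the remaining extension data with $\Hom_{\bZ_p}(T_pA(\fpb)\otimes T_pA^\vee(\fpb),\hat{\bG}_m(R))$ --- are exactly the ingredients of \cite[\S2]{kaST}. Your candid acknowledgment that the first reduction is the substantive input and must be cited is appropriate; Katz treats it in \cite[\S1]{kaST} via Drinfeld's rigidity lemma. One minor caution: the computation of the $\mathrm{Ext}^1$ group and its matching with the explicit pairing $q_{\cA/R}$ is a bit more delicate than ``formal bookkeeping'' (Katz devotes several pages to it), but the architecture you describe is correct.
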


In the following we refer to the above isomorphism as the {\it \ST isomorphism}.

Let $A$ and $B$ be ordinary abelian varieties over $\fpb$, and let $f:A\rightarrow B$ be an $\fpb$-isogeny. We write $f^\vee: B^\vee\rightarrow A^\vee$ for the dual isogeny. A theorem of Drinfield (\cite[Lemma 1.1.3, p.141]{kaST})  proves that for any Artinian local ring $R$, and pair of  liftings  $(\cA/R,j_{\cA})$,$(\cB/R,j_{\cB})$ of $A,B$ respectively, if there exists an isogeny $\phi:\cA\rightarrow \cB$ lifting $f$ (i.e. satisfying $f=j_{\cB}\circ (\phi\otimes 1_{\fpb}) \circ j_{\cA}^{-1}$), then $\phi$ is unique. Yet, in general such a lifting of $f$ will not exists.  Theorem \ref{thmcon} gives a necessary and sufficient condition for the existence of $\phi$ in terms of the \ST isomorphism.

\begin{theorem}[\cite{kaST} Theorem 2.1, Part 4,  p.149]\label{thmcon}
Let the notation be as above.  Given $\cA$ and $\cB$ lifting $A$ and $B$, respectively, over an Artinian local ring $R$.  A morphism $f:A\rightarrow B$ lifts to a morphism $\cA\rightarrow \cB$ if and only if $q_{\cA/R}\circ (1\times f^\vee)=q_{\cB/R}\circ (f\times 1)$.
\end{theorem}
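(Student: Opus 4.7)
The plan is to deduce the criterion by passing to $p$-divisible groups, where the Serre--Tate $q$-form acquires a direct interpretation as an extension class, after which the lifting criterion becomes a transparent compatibility of extensions under pullback and pushout.

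First I would reduce the problem from abelian schemes to their $p$-divisible groups. By the main Serre--Tate theorem (essentially Theorem \ref{ST-theorem}, or the equivalence of categories underlying it, see \cite[Section 1]{kaST}), the functor $\cA\mapsto \cA[p^\infty]$ is an equivalence between the category of liftings of an ordinary abelian variety over an Artinian local $R$ with residue field $\fpb$ and the category of liftings of its $p$-divisible group. In particular, the map $\Hom_R(\cA,\cB)\to \Hom_R(\cA[p^\infty],\cB[p^\infty])$ is a bijection on lifts of $f$. Thus, $f:A\ra B$ lifts to $\cA\ra\cB$ if and only if $f[p^\infty]:A[p^\infty]\ra B[p^\infty]$ lifts to a morphism $\cA[p^\infty]\ra\cB[p^\infty]$ of $p$-divisible groups over $R$.

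Next I would recall how the $q$-form encodes the extension class. Because $A$ and $B$ are ordinary, each lifting sits in a canonical connected-\'etale short exact sequence
\begin{equation*}
0\to \cA[p^\infty]^\circ \to \cA[p^\infty] \to \cA[p^\infty]^{\et} \to 0,
\end{equation*}
and similarly for $\cB$. The \'etale quotient is canonically the constant $p$-divisible group $\mathbb{Q}_p/\mathbb{Z}_p\otimes T_pA(\fpb)$, while the connected part is a formal torus whose character group is canonically $T_pA^\vee(\fpb)$ (via Cartier duality, since the Cartier dual of $\cA[p^\infty]^\circ$ is $\cA^\vee[p^\infty]^{\et}$). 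The extension class of this sequence in $\Ext^1_R(\cA[p^\infty]^{\et},\cA[p^\infty]^\circ)$ corresponds, after identifying these modules with their characterizations above, precisely to a $\mathbb{Z}_p$-bilinear pairing $T_pA(\fpb)\times T_pA^\vee(\fpb)\to \hat{\mathbb{G}}_m(R)$; this pairing is exactly $q_{\cA/R}$ as constructed in \cite[Section~2.0]{kaST}.

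Then I would translate the lifting condition into a compatibility of extension classes. Any lift $\phi:\cA[p^\infty]\to\cB[p^\infty]$ of $f[p^\infty]$ must restrict to a morphism $\phi^\circ:\cA[p^\infty]^\circ\to\cB[p^\infty]^\circ$ on connected parts and induce $f$ on \'etale parts. The map on \'etale parts is determined by $f: T_pA(\fpb)\to T_pB(\fpb)$; the map on connected parts is determined, via Cartier duality, by $f^\vee:T_pB^\vee(\fpb)\to T_pA^\vee(\fpb)$. The existence of an extension-preserving $\phi$ is equivalent to the statement that the pullback of the extension class of $\cB[p^\infty]$ along $f$ agrees with the pushout of the extension class of $\cA[p^\infty]$ along the connected map induced by $f^\vee$. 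Unwinding this equality at the level of the associated bilinear pairings yields exactly
\begin{equation*}
q_{\cA/R}\circ(1\times f^\vee) \;=\; q_{\cB/R}\circ(f\times 1).
\end{equation*}
Conversely, when this identity holds, it provides the data needed to glue $\phi^\circ$ and the map $f$ on the \'etale quotient into a morphism of extensions, i.e., a lift $\phi$; Drinfeld's uniqueness (\cite[Lemma 1.1.3]{kaST}) then forces $\phi$ to be a lift of $f[p^\infty]$, and by Step 1 it comes from a unique lift of $f$.

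The main obstacle is Step 2: rigorously identifying the $q$-form with the class of the connected--\'etale extension, and verifying that this identification is natural in morphisms of lifts. Once this naturality is in place, both directions of the biconditional become formal, as the identity in the theorem is nothing more than the functoriality of the Baer-sum description of $\Ext^1$ applied to the morphism $f$ on \'etale parts and the dual morphism on the connected parts.
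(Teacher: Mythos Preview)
The paper does not supply its own proof of this statement: Theorem~\ref{thmcon} is simply quoted from \cite[Theorem 2.1, Part 4]{kaST} and then applied. So there is no proof in the paper to compare against; the relevant benchmark is Katz's original argument.

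Your outline is correct and is in fact the standard route (essentially Katz's). The three steps---reducing to $p$-divisible groups via the Serre--Tate equivalence, identifying $q_{\cA/R}$ with the class of the connected--\'etale extension in $\Ext^1$, and then reading the lifting condition as the equality of the pushout of $[\cA[p^\infty]]$ along the connected map with the pullback of $[\cB[p^\infty]]$ along the \'etale map---are exactly how the criterion is established. One point worth making explicit (you gesture at it but do not state it): the map on connected parts is not an extra choice but is \emph{forced}, because the connected parts are $p$-divisible formal tori over $R$, and morphisms between such tori are determined by their effect on character groups, which are \'etale and hence rigid. This is what justifies writing ``the map on connected parts is determined by $f^\vee$'' and is what reduces the existence of $\phi$ to a single $\Ext^1$ identity rather than a family of conditions. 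With that remark added, and with the naturality of the identification $q\leftrightarrow$ extension class (your ``main obstacle'') checked---which Katz does in \cite[\S2]{kaST}---the argument is complete.
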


We apply the above results in our setting, adapting to our context the arguments in \cite[Sections 8.2.4 and 8.2.5]{hida}. (In loc. cit,  Hida deals respectively with the cases of Siegel varieties and of the unitary Shimura varieties over a quadratic imaginary field in which $p$ splits.)

Let $\cO_{\cmfield, p} := \cO_K\otimes_\ZZ\ZZ_p$.  Recall that under our assumptions $p$ splits completely in $K$. I.e.,  we have $\cO_{K,p}=\prod_{i=1}^r \cO_{K_{\fP_i}}\times \prod_{i=1}^r \cO_{K_{\fP^c_i}}$, where  $\cO_{K_{\fP_i}}\simeq\zz_p$ and $\cO_{K_{\fP^c_i}}\simeq\zz_p$ for all $i$.

Let $\bar{x}_0\in \Sord(\fpb)$ be an ordinary point, and $\underline{A}:=\underline{\cA}_{\bar{x}_0}$ be the associated abelian variety over $\fpb$, together with its additional structures. Then, the physical Tate module $T_pA(\fpb)$ of $A$ is a free $\cO_{K,p}$-module, and the prime-to-p polarization $\lambda $ of $A$ induces a conjugate-linear isomorphism $T_p(\lambda):T_p(A)(\fpb)\isomto T_pA^\vee(\fpb)$.   We deduce that \[T_pA(\fpb)=\left(\oplus_{i=1}^r T_{\fP_i}A(\fpb)\right)\oplus\left(\oplus_{i=1}^r  T_{\fP^c_i}A(\fpb)\right)\] and that $T_p(\lambda)$ induces a $\zz_p$-linear isomorphism between $T_{\fP_i^c}A(\fpb)$ and $T_{\fP_i}A^\vee(\fpb)$.\

Finally, we recall that the formal completion ${\Sord}^\wedge_{\bar{x}_0}$ of $\Sord$ at $\bar{x}_0$ represents the deformation problem naturally associated with the moduli problem. This observation allows us to canonically identify ${\Sord}^\wedge_{\bar{x}_0}$  with the closed subspace of the deformation space $\cM_{A/\fpb}$ consisting of all deformations  of $A$ which are (can be) endowed with additional structures (a polarization and an $\cO_K$-action) lifting those of $A$. We deduce the following description of ${\Sord}^\wedge_{\bar{x}_0}$.

\begin{prop}  \label{STthm} Let the notation be as above.

The map $x\mapsto q_x:=(1\times T_p(\lambda))\circ q_{\cA_x}$, from ${\Sord}^\wedge_{\bar{x}_0}$ to ${\rm Hom}_{\zz_p}(T_pA(\fpb)\otimes T_pA(\fpb),\hat{\bG}_m)$, induces
 an isomorphism between 
${\Sord}^\wedge_{\bar{x}_0}$  and
$\oplus_{i=1}^r{\rm Hom}_{\zz_p}(T_{\fP_i}A(\fpb)\otimes T_{\fP_i^c}A(\fpb),\hat{\bG}_m)$.
\end{prop}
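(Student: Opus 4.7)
The plan is to realize ${\Sord}^\wedge_{\bar x_0}$ as the subspace of the Serre--Tate deformation space $\cM_{A/\fpb}$ cut out by the two extra conditions in the PEL moduli problem: the deformation must admit a lift of the $\cO_K$-action $i$ and of the polarization $\lambda$. By Grothendieck--Messing (or, more directly, by the fact that the moduli problem $\cM$ is formally smooth and so is its ordinary locus), the completed local ring at $\bar x_0$ indeed prorepresents this sub-deformation problem, so we only need to translate the conditions via Theorems \ref{ST-theorem} and \ref{thmcon} and compute.

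The central step is to apply Drinfeld's rigidity (Theorem \ref{thmcon}) twice. First, fixing $\alpha \in \cO_K$, the endomorphism $i(\alpha)$ lifts to a deformation $\cA/R$ if and only if the Serre--Tate pairing $q_{\cA/R}$ satisfies $q_{\cA/R}(i(\alpha)v, w) = q_{\cA/R}(v, i(\alpha)^\vee w)$ for all $v \in T_pA(\fpb)$ and $w \in T_pA^\vee(\fpb)$. Under the Rosati involution induced by $\lambda$ we have $i(\alpha)^\vee = T_p(\lambda)\circ i(\alpha^c) \circ T_p(\lambda)^{-1}$, so, pushing the condition through $T_p(\lambda)$, the substituted pairing $q_x := (1 \times T_p(\lambda))\circ q_{\cA_x}$ becomes $\cO_{K,p}$-\emph{Hermitian}: $q_x(\alpha v, w) = q_x(v, \alpha^c w)$. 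Second, the polarization $\lambda$ itself lifts if and only if $q_{\cA/R}(v, \lambda_* w) = q_{\cA/R}(w, \lambda_* v)$, which after the same substitution is exactly the symmetry condition $q_x(v,w) = q_x(w,v)$.

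Now I would unpack these conditions using the splitting $p = \prod \fP_i\fP_i^c$. Since $\cO_{K,p} = \prod_i (\cO_{K_{\fP_i}} \times \cO_{K_{\fP_i^c}})$ decomposes via a system of orthogonal idempotents $e_{\fP_i}, e_{\fP_i^c}$, the Hermitian condition forces $q_x(v,w) = 0$ unless $v$ lies in the $\fP_i$-component and $w$ in the $\fP_i^c$-component (or vice versa), for a common $i$; the cross terms between distinct $i$'s and between a component and itself all vanish because $\hat{\bG}_m$ is $p$-torsion-free and $e_{\fP_i} - e_{\fP_i^c}^c = 0$ acts invertibly on the offending summands. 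Thus $q_x$ is determined by its restrictions
\[
q_x^{(i)} \colon T_{\fP_i}A(\fpb) \otimes_{\zz_p} T_{\fP_i^c}A(\fpb) \longrightarrow \hat{\bG}_m,
\]
together with their transposes $T_{\fP_i^c}A \otimes T_{\fP_i}A \to \hat{\bG}_m$. The symmetry condition then says precisely that the two are related by swapping arguments, so each $q_x^{(i)}$ may be chosen independently and the map $x \mapsto (q_x^{(i)})_i$ is a bijection onto $\bigoplus_{i=1}^r \Hom_{\zz_p}(T_{\fP_i}A(\fpb)\otimes T_{\fP_i^c}A(\fpb), \hat{\bG}_m)$.

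The main obstacle I expect is the careful bookkeeping in the second paragraph: verifying that the Hermitian and symmetry conditions, jointly, cut out exactly the claimed direct sum (and nothing more), requires handling the Rosati involution's interaction with the decomposition $T_pA = \bigoplus_i(T_{\fP_i}A\oplus T_{\fP_i^c}A)$ correctly, and checking that the isomorphism $T_p(\lambda)$ interchanges the $\fP_i$ and $\fP_i^c$ parts with the correct signs. Once this is done, a dimension count (the right-hand side is a formal torus of dimension $\sum_i \siga_{\tau_i}\sigb_{\tau_i} \cdot [K_{\fP_i}^+\!:\!\zz_p] = \dim \Sord$) provides a sanity check that no conditions have been omitted, and a naturality argument shows the bijection of points upgrades to an isomorphism of formal schemes.
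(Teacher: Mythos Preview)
Your proposal is correct and follows essentially the same route as the paper: identify ${\Sord}^\wedge_{\bar x_0}$ inside the full Serre--Tate deformation space by imposing (via Theorem~\ref{thmcon}) the lifting conditions for $\lambda$ and for the $\cO_K$-action, translate these into the symmetry and $c$-hermitian conditions on $q_x$, and then use the idempotent decomposition of $\cO_{K,p}$ to see that only the blocks $T_{\fP_i}A\otimes T_{\fP_i^c}A$ survive, with the symmetric partner $T_{\fP_i^c}A\otimes T_{\fP_i}A$ determined by the swap. One expository slip: the clause ``$e_{\fP_i}-e_{\fP_i^c}^c=0$ acts invertibly on the offending summands'' is garbled (zero acts invertibly on nothing nonzero); what you actually need, and what the paper does, is simply apply the hermitian relation $q_x(e_{\fP}v,w)=q_x(v,e_{\fP^c}w)$ to the relevant idempotents to kill the unwanted blocks directly---no invertibility or torsion-freeness of $\hat{\bG}_m$ is required.
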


\begin{proof} 
For any  local Artinian ring  $R$ and $x\in\Sord(R)$, lifting $\bar{x}_0$,
let $q_x$ denote the $\zz_p$-bilinear form defined in the statement
\[q_x=(1\times T_p(\lambda))\circ q_{\cA_x}:T_pA(\fpb)\times T_pA(\fpb)\rightarrow \hat{\mathbb G}_m(R).\]

By Theorem \ref{thmcon} 
the additional structures on $A$ (namely, the polarization and the $\cO_K$-action respectively) lift to $\cA_x$ if and only if $q_x$  is symmetric and $c$-hermitian.

Indeed, let  ${\rm sw}:T_pA(\fpb)\times T_pA(\fpb)\rightarrow T_pA(\fpb)\times T_pA(\fpb)$ denote 
the map ${\rm sw}(v,w)=(w,v)$, for all $v,w\in T_pA(\fpb)$.  Then, given any abelian variety $\cA$ lifting $A$, $q_{{\cA}^\vee} =q_{\cA} \circ {\rm sw}$. Also, note that since the polarization $\lambda $ has degree prime-to-$p$ then the induced maps on the physical Tate modules satisfy the condition $T_p(\lambda^\vee)=T_p(\lambda)^{-1}$. 
We deduce that the polarization $\lambda$ of $A$ lifts to $\cA_x$ if and only if $q_{\cA_x}\circ (1\times T_p(\lambda^\vee))=q_{\cA_x^\vee}\circ (T_p(\lambda)\times 1)$, or equivalently  if and only if $q_x=q_x\circ {\rm sw}$.
Similarly, for all $b\in\cO_K$, let $i(b)$ denote the action on $A$. We deduce that
the action of $\cO_K$ on $A$ lifts to $\cA$ if and only if  $q_{\cA_x}\circ (1\times i(b)^\vee)=q_{\cA_x}\circ (i(b)\times 1)$, for all $b\in\cO_{K}$, or equivalently
if and only if  $q_{x}(1\times i(b^c))=q_{x}(i(b) \times 1)$, because $\lambda\circ i(b^c)=i(b)^\vee\circ \lambda$ by definition.  (Recall Section \ref{PELmoduli-section}.)
 
For all $i,j=1,\cdots ,r$, let the forms $q_{x,i,j}:T_{\fP_i}A(\fpb)\times T_{\fP_j^c}A(\fpb)\rightarrow \hat{\bG}_m(R)$ and  $q_{x,i,j,c}:T_{\fP_i^c}A(\fpb)\times T_{\fP_j}A(\fpb)\rightarrow \hat{\bG}_m(R)$ denote the restrictions of $q_x$. Then,  since $q_x$ is symmetric, for all $i,j=1,\cdots ,r$, we have $q_{x,i,j}=q_{x,j,i,c}\circ {\rm sw},$ where with abuse of notation we still write ${\rm sw}$ for its restriction $T_{\fP_i}A(\fpb)\times T_{\fP_j^c}A(\fpb)\rightarrow T_{\fP_j^c}A(\fpb)\times T_{\fP_i}A(\fpb)$.  Furthermore,  since $q_x$ is $c$-hermitian, we deduce that  for all $i\neq j$ the forms $q_{x,i,j}$ and $q_{x,i,j,c}$ necessarily vanish. Thus, \[q_x=(\oplus_{i=1}^rq_{x,i,i})\oplus (\oplus_{i=1}^rq_{x,i,i}\circ {\rm sw}).\]  In particular,  the form $q_x$ is uniquely determined by its restrictions
\[q_{x, i,i} :T_{\fP_i}A(\fpb)\times T_{\fP_i^c}A(\fpb)\rightarrow \hat{\bG}_m(R),\] for $i=1, \dots r$. To conclude we observe that any collection $(q_i)_{i=1,\dots , r}$ of $\zz_p$-bilinear morphisms, $q_i: T_{\fP_i}A(\fpb)\times T_{\fP_i^c}A(\fpb)\rightarrow \hat{\bG}_m(R)$, extends uniquely to a symmetric $c$-hermitian bilinear form $q:T_pA(\fpb)\otimes T_pA(\fpb)\rightarrow \hat{\mathbb G}_m(R)$, namely $q=(\oplus_{i=1}^rq_{i})\oplus(\oplus_{i=1}^r q_{i}\circ {\rm sw})$.
\end{proof}

\begin{remark}\label{CMlift}
As a consequence of the \ST theory, we see that any point ${\bar{x}}_0\in \Sord(\fpb)$ always lifts to a point $x_0\in\Sord(\Witt)$. In fact, it even admits a canonical lift $y_0$, namely the one corresponding to the form $q=0$.  The abelian variety $\cA_{y_0}$ is the unique deformation of $\cA_{\bar{x}_0}$ to which all endomorphisms lift. Thus, in particular, $\cA_{y_0}$ is a CM abelian variety with ordinary reduction. The point $y_0$ is called the canonical CM lift of ${\bar{x}}_0$.
\end{remark}

We finally describe how the choice of an Igusa structure (of infinite level) on $\underline{A}=\underline{\cA}_{x_0}$ determines a choice of local parameters.

We consider the $\cO_{K,p}$-modules introduced in Section \ref{Igusalevel}:  \[\cL=\cL^+\oplus \cL^-\simeq \oplus_{i=1}^r(\cO_{K_{\mathfrak{P}_i}}^{a_{\tau_i+}}\oplus \cO_{K_{\mathfrak{P}^c_i}}^{a_{\tau_i-}}),\] 
where the decomposition comes from
the splitting $p=w\cdot w^c$, and for each $i=1,\dots ,r$, $\tau_i$ is a place inducing $\mathfrak{P}_i$.   For each $i$, we write $\cL^+_{i}\subset \cL^+$ for the submodule corresponding to the place $\tau_i$,  $\cL^+_i\simeq \cO_{K_{\mathfrak{P}_i}}^{a_{\tau_i+}}$. Similarly, we define
$\cL^-_{i}\subset \cL^-$, $\cL_i^-\simeq \cO_{K_{\mathfrak{P}_i}}^{a_{\tau_i-}}$.  Then, $\cL^+=\oplus_{i=1}^r \cL^+_i$ and $\cL^-_i=\oplus_{i=1}^r \cL^-_i$.

We define the $\cO_{K,p}$-module
\[\cL^2\simeq\oplus_{i=1}^r \cL^+_i\otimes \cL^-_i\]
naturally regarded as a submodule of $\cL\otimes\cL$. \label{STthy}

\begin{prop}\label{propbeta} 
Let $x_0\in\Sord(\Witt)$.  Each point $x\in\Igusa(\Witt)$  above $x_0$  determines a unique isomorphism \[\beta_x: {\Sord}^\wedge_{\bar{x}_0}\rightarrow {\hat{\bG}}_m\otimes \cL^2.\]
In particular, for each $x$ we have an isomorphism of local rings \[\beta^*_x: \Witt[[t]]\otimes(\cL^2)^\vee\isomto\Ring_{\Sord, x_0} \, ,\]
{where $\Witt[[t]]\otimes(\cL^2)^\vee$ denotes the complete ring corresponding to the formal scheme ${\hat{\bG}}_m\otimes \cL^2$, i.e. a choice of basis of $(\cL^2)^\vee$ yields an isomorphism $\Witt[[t]]\otimes(\cL^2)^\vee \simeq \Witt[[t_j \, | \, 1 \leq j \leq \sum_{i=1}^r{a_{\tau_i+}a_{\tau_i-}}]]$.}

\end{prop}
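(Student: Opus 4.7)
The plan is to combine the Serre-Tate description of $\Sord^\wedge_{\bar x_0}$ (Proposition \ref{STthm}) with a trivialization of the physical Tate modules of $A := \cA_{\bar x_0}$ and $A^\vee$ supplied by the Igusa structure attached to $x$. Write $\iota_x: \cL \otimes \mu_{p^\infty} \hookrightarrow A[p^\infty]^\circ$ for the Igusa structure of infinite level determined by $x$; this is $\cO_K$-linear and hence respects the decomposition coming from $\cO_{K,p} = \prod_i(\cO_{K_{\fP_i}} \times \cO_{K_{\fP_i^c}})$.

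First I would apply Cartier duality to $\iota_x$. Since $A[p^\infty]^\circ$ is Cartier-dual to $A^\vee[p^\infty]^{\et}$, the Igusa structure yields in the limit an $\cO_K$-linear isomorphism $\eta_x: \cL^\vee \isomto T_pA^\vee(\fpb)$. Composing with the inverse of the $c$-conjugate-linear isomorphism $T_p(\lambda): T_pA(\fpb) \isomto T_pA^\vee(\fpb)$ induced by the prime-to-$p$ polarization $\lambda$ yields a canonical $c$-conjugate-linear isomorphism $\cL^\vee \isomto T_p A(\fpb)$. Tracking $\cO_K$-eigendecompositions (and remembering that $c$-linearity swaps $\fP_i$ with $\fP_i^c$), one obtains, for each $i$, canonical identifications
\[
T_{\fP_i} A(\fpb) \isomto (\cL_i^-)^\vee, \qquad T_{\fP_i^c} A(\fpb) \isomto (\cL_i^+)^\vee.
\]

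Next, substituting these trivializations into the Serre-Tate isomorphism of Proposition \ref{STthm} gives
\[
\Sord^\wedge_{\bar x_0} \;\isomto\; \bigoplus_{i=1}^r \Hom_{\zz_p}\!\bigl(T_{\fP_i}A(\fpb) \otimes T_{\fP_i^c} A(\fpb),\; \hat{\bG}_m\bigr) \;\simeq\; \bigoplus_{i=1}^r \Hom_{\zz_p}\!\bigl((\cL_i^+ \otimes \cL_i^-)^\vee,\; \hat{\bG}_m\bigr) \;\simeq\; \hat{\bG}_m \otimes \cL^2,
\]
where the last step uses the natural isomorphism $\Hom_{\zz_p}(M^\vee, \hat{\bG}_m) \simeq \hat{\bG}_m \otimes M$ for a finite free $\zz_p$-module $M$. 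This produces the asserted $\beta_x$, and uniqueness is automatic because every step of the construction is canonical once the Igusa structure $\iota_x$ is fixed. For the local-ring statement, I would use the canonical identification $\Ring_{\Sord, x_0} = \cO^\wedge_{\Sord, \bar x_0}$ recalled in Section \ref{local-section} together with the standard description of the ring of functions on a formal torus $\hat{\bG}_m \otimes M$ as $\Witt[[t]] \otimes M^\vee$ (where a choice of $\zz_p$-basis $\{v_j\}$ of $M$ supplies coordinates $t_j = v_j - 1$); pulling back along $\beta_x$ with $M = \cL^2$ gives $\beta_x^*$.

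The main obstacle is the careful bookkeeping of $\cO_K$-equivariance under complex conjugation when transferring from $T_pA^\vee(\fpb)$ to $T_pA(\fpb)$ via $T_p(\lambda)$, so that the $\cL^\pm_i$ are correctly paired with the $\fP_i$- and $\fP_i^c$-isotypic parts of the Tate modules appearing in Proposition \ref{STthm}. Once that identification is fixed, the rest is essentially a tautological rewriting of the Serre-Tate coordinates in terms of the basis supplied by the Igusa structure, and the symmetry and $c$-Hermitian conditions already verified in the proof of Proposition \ref{STthm} guarantee that this rewriting lands precisely in the subspace $\cL^2 \subset \cL \otimes \cL$.
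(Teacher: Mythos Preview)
Your proposal is correct and takes essentially the same approach as the paper. The paper's proof is terser: it bundles your Cartier-duality-plus-polarization step into a single $\cO_{K,p}$-linear isomorphism $T_p(\iota^\vee): T_pA(\fpb) \isomto \cL^\vee$ (using the identification of $\cL^\vee$ with the orthogonal complement of $\cL$ in $L_p$ from Section~\ref{Igusalevel}), reads off the trivializations of $T_{\fP_i}A(\fpb)$ and $T_{\fP_i^c}A(\fpb)$ by $\cO_K$-linearity, and then composes with Proposition~\ref{STthm} exactly as you do. Your explicit factorization through $T_pA^\vee(\fpb)$ and the attendant $c$-linearity bookkeeping is a legitimate way to unpack that single step; note that whether $T_{\fP_i}A(\fpb)$ ends up identified with $(\cL_i^+)^\vee$ or $(\cL_i^-)^\vee$ is immaterial for the final statement, since $\cL^2 = \oplus_i \cL_i^+ \otimes \cL_i^-$ is symmetric in the two factors.
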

\begin{proof}
The choice of an Igusa structure of infinite level $\iota=\iota_x:\mu_{p^\infty}\otimes \cL\hookrightarrow A[p^\infty]$ is equivalent to the choice of an $\cO_{K,p}$-linear isomorphism $T_p(\iota^\vee):T_pA(\fpb) \isomto\cL^{\vee}$.  We observe that by linearity $$T_p(\iota^\vee)(T_wA(\fpb))=(\cL^+)^{\vee} \text{ and } T_p(\iota^\vee)(T_{w^c}A(\fpb))=(\cL^-)^{\vee}.$$  More precisely, $T_p(\iota^\vee)$ induces isomorphisms
$T_{\fP_i}A(\fpb)\simeq (\cL_i^+)^{\vee}$ and $T_{\fP_i^c}A(\fpb)\simeq(\cL^-_i)^{\vee}$, for all $i=1, \dots ,r$.
The isomorphism $\beta_x$ is defined as the composition of  the \ST isomorphism in proposition \ref{STthm} with the inverses of the trivializations induced by $T_p(\iota^\vee)$, while $\beta^*_x$ is the corresponding ring homomorphism, for $t$ the canonical parameter on ${\hat{\bG}}_m$.
\end{proof}

Let $\mathbb I$ denote the identity on $\Witt[[t]]$. For all $g\in\Levin (\zz_p)$, we write ${\mathbb I}\otimes g$ for the natural left $\Witt$-linear action on $  \Witt[[t]]\otimes(\cL^2)^\vee$.

As in Lemma \ref{Leviaction}, the action of $\Levin(\zz_p)$ on the points of the Igusa tower lying  above  $x_0$ allows us to compare the above construction for different points $x$.

\begin{lemma}\label{Leviaction2} For all $x\in\Igusa(\Witt)$ and $g\in\Levin(\zz_p)$, we have 
\[\beta^*_{x^g}=\beta^*_x\circ ({\mathbb I}\otimes g).\] 
\end{lemma}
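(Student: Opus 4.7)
The plan is to trace through the definitions: $\beta_x$ depends on the choice of $x \in \Igusa(\Witt)$ above $x_0$ only via the induced $\cO_{K,p}$-linear isomorphism $\alpha_x := T_p(\iota_x^\vee): T_pA(\fpb) \isomto \cL^\vee$, since the Serre-Tate form $q_y$ is intrinsic to the deformation $y \in {\Sord}^\wedge_{\bar x_0}$ and does not involve $x$. So I would first verify how $\alpha_x$ transforms under the $\Levin(\zz_p)$-action on the Igusa tower. By the definition recalled in Section \ref{padicaut-section}, the point $x^g$ corresponds to the Igusa structure $\iota_{x^g} = \iota_x \circ (1 \otimes g)$; applying the (contravariant) Cartier duality $\vee$ and the (covariant) physical Tate module functor $T_p$ yields the relation $\alpha_{x^g} = \alpha_x \circ g^{\vee}$, where $g^\vee$ is the induced endomorphism of $\cL^\vee$ corresponding to the natural action of $g$ on $\cL^\vee$ by precomposition with the inverse (as described at the end of Remark \ref{explicitdecomposition}).

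Second, I would substitute into the explicit formula for $\beta_x$ extracted from the proof of Proposition \ref{propbeta}: for $y \in {\Sord}^\wedge_{\bar x_0}$, the element $\beta_x(y) \in {\hat{\bG}}_m \otimes \cL^2$ is the image of $q_y \in \Hom_{\zz_p}(T_pA(\fpb) \otimes T_pA(\fpb), {\hat{\bG}}_m)$ under the isomorphism induced by $\alpha_x^{-1} \otimes \alpha_x^{-1}$, followed by the identification with the symmetric hermitian sub-object ${\hat{\bG}}_m \otimes \cL^2 \subset {\hat{\bG}}_m \otimes \cL \otimes \cL$ described in the proof of Proposition \ref{STthm}. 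Since replacing $x$ by $x^g$ only modifies the trivializing factor, pushing the action through the tensor product gives $\beta_{x^g} = (\mathbb{I} \otimes g) \circ \beta_x$ as morphisms of formal schemes ${\Sord}^\wedge_{\bar x_0} \to {\hat{\bG}}_m \otimes \cL^2$, where $g$ acts diagonally on $\cL^2 \subset \cL \otimes \cL$ via its natural action on $\cL$.

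Dualizing the formal scheme identity to the level of rings and noting that the induced action on $\Witt[[t]] \otimes (\cL^2)^\vee$ is precisely the $\mathbb{I} \otimes g$ appearing in the statement, one concludes $\beta^*_{x^g} = \beta^*_x \circ (\mathbb{I} \otimes g)$. The only real subtlety is keeping track of the interplay between the covariant action of $\Levin(\zz_p)$ on $\cL$ (and on Igusa structures) and the contravariant action on $\cL^\vee$, combined with the symmetric-hermitian cut that takes $\cL \otimes \cL$ down to $\cL^2$; once the conventions of Remark \ref{explicitdecomposition} and Proposition \ref{STthm} are fixed, the identity is a routine diagram chase, entirely parallel in spirit to the proof of Lemma \ref{Leviaction}.
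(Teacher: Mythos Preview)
Your proposal is correct and takes essentially the same approach as the paper: the paper's proof is a single line, ``the statement follows immediately from the definition since $\iota_{x^g}=\iota_x\circ (1\otimes g)$,'' and you have simply unpacked this definition-chase in detail. One small bookkeeping slip: since $\alpha_x:T_pA(\fpb)\to\cL^\vee$ and your $g^\vee$ is an automorphism of the target $\cL^\vee$, the relation should read $\alpha_{x^g}=g^\vee\circ\alpha_x$ rather than $\alpha_x\circ g^\vee$, but this does not affect the argument.
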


\begin{proof}
 The statement follows immediately from the definition since $\iota_{x^g}=\iota_x\circ (1\otimes g)$ .
\end{proof}
\begin{defn}
Given a point $x\in\Igusa(\Witt)$,  for any $f\in V$ we define the $t$-expansion of  $f$ at the point $x$ as \[ f_x(t):= {\beta^*_{x}}^{-1}(\loc_{x} (f)).\]
\end{defn}

\begin{prop} \label{Leviaction3}
Let $x\in\Igusa(\Witt)$. For all $g\in \Levin(\zz_p)$ and $f\in V$ we have 
\[f_{x^g}(t)= ({\mathbb I}\otimes g^{-1} ) (g\cdot f)_x(t).\] 
\end{prop}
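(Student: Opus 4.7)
The proof plan is short: the statement is a direct consequence of the two previously established comparison lemmas, namely Lemma \ref{Leviaction} (comparing localization maps at $x$ and $x^g$) and Lemma \ref{Leviaction2} (comparing the trivializations $\beta^*_x$ and $\beta^*_{x^g}$). No new geometric input is needed; the only thing to do is to unwind the definition of the $t$-expansion and compose these two identities correctly.

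Explicitly, I would start from the definition $f_{x^g}(t) := (\beta^*_{x^g})^{-1}(\loc_{x^g}(f))$. By Lemma \ref{Leviaction2}, $\beta^*_{x^g} = \beta^*_x \circ (\mathbb{I}\otimes g)$, which is an isomorphism of complete local $\Witt$-algebras, so taking inverses gives $(\beta^*_{x^g})^{-1} = (\mathbb{I}\otimes g^{-1}) \circ (\beta^*_x)^{-1}$. By Lemma \ref{Leviaction}, $\loc_{x^g}(f) = \loc_x(g\cdot f)$. Substituting yields
\[
f_{x^g}(t) \;=\; (\mathbb{I}\otimes g^{-1}) \circ (\beta^*_x)^{-1} \bigl( \loc_x(g\cdot f) \bigr) \;=\; (\mathbb{I}\otimes g^{-1})\bigl( (g\cdot f)_x(t)\bigr),
\]
which is exactly the claimed identity.

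There is no real obstacle here; the only point to verify carefully is that $\mathbb{I}\otimes g$ is an honest ring automorphism of $\Witt[[t]]\otimes(\cL^2)^\vee$ (so that it makes sense to invert it and to intertwine it with $\beta^*_x$), and that both Lemma \ref{Leviaction} and Lemma \ref{Leviaction2} are being applied to the same point $x$ and the same element $g\in \Levin(\zz_p)$ acting on the Igusa tower via the action recalled at the start of Section \ref{local-section}. Both of these are immediate from how $\Levin(\zz_p)$ acts on the Igusa structure at infinite level (via $\iota_{x^g}=\iota_x\circ (1\otimes g)$) and compatibly on $\cL$, so the proof is complete in one line once the two lemmas are in hand.
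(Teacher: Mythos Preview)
Your proof is correct and follows exactly the same approach as the paper, which simply states that the result follows from Lemmas \ref{Leviaction} and \ref{Leviaction2} combined. You have just made the combination explicit by writing out the chain of equalities.
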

\begin{proof}
The statement follows from Lemmas \ref{Leviaction}, \ref{Leviaction2} combined.
\end{proof}
Finally, we can state an appropriate analogue of the $q$-expansion principle for $p$-adic automorphic forms. 

\begin{theorem}[``$t$-expansion principle'' or ``\ST expansion principle'']\label{STexpprinciple-thm}
Let  $x\in\Igusa(\Witt)$. 
For any weight $\kappa$ and any  $f\in V^N[\kappa]$, the $t$-expansion $f_x(t)$  vanishes if and only if $f$ vanishes.

\end{theorem}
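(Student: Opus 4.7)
The proof should be essentially immediate from the machinery already developed, so the plan is a short synthesis of Propositions \ref{propq} and \ref{propbeta}.

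The forward implication is trivial: if $f=0$ then $\loc_x(f)=0$ in $\Ring_{\Sord,x_0}$, and applying $(\beta_x^*)^{-1}$ gives $f_x(t)=0$ in $\Witt[[t]]\otimes(\cL^2)^\vee$.

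For the converse, the key observation is that $\beta_x^*$ is an isomorphism of local rings (Proposition \ref{propbeta}), so the definition $f_x(t)=(\beta_x^*)^{-1}(\loc_x(f))$ shows that $f_x(t)=0$ if and only if $\loc_x(f)=0$ in $\Ring_{\Sord,x_0}$. Thus the problem reduces to showing that, on the $\kappa$-eigenspace $V^N[\kappa]$, vanishing of the localization at $x$ implies vanishing of $f$ — and this is exactly Proposition \ref{propq}. Assembling this, if $f_x(t)=0$, then $\loc_x(f)=0$, hence by the injectivity in Proposition \ref{propq} we conclude $f=0$.

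It is worth unpacking why this short deduction is actually the full content of the theorem: the nontrivial input hiding inside Proposition \ref{propq} is Hida's irreducibility theorem for the partial $SU$-Igusa tower together with the transitivity of the $T(\zz_p)$-action on the connected components of $\Ig$ over $\Sord$. In effect, the $T$-eigenspace condition lets one convert a statement about vanishing on one component (which is what localization at a single $x$ detects, thanks to injectivity on an irreducible cover) into vanishing on every component, via Lemma \ref{Leviaction} applied to elements $g\in T(\zz_p)$, which twists the localization by $\kappa(g)$. So the ``main obstacle'' was already handled upstream; at the level of the theorem itself there is no obstacle, only bookkeeping.

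The only subtle point to flag in the write-up is that, in contrast with the naïve single-cusp $q$-expansion principle over an irreducible base, here we explicitly need to work on $V^N[\kappa]$ (rather than all of $V$): Proposition \ref{generalq} shows that for a general $f\in V$ one needs to localize at the whole orbit $\{x^g : g\in T(\zz_p)\}$ to recover $f$. This is why the statement is phrased for $\kappa$-isotypic $p$-adic automorphic forms, and the $T$-equivariance established in Lemma \ref{Leviaction} (giving $\loc_{x^g}(f)=\kappa(g)\loc_x(f)$ on $V^N[\kappa]$) is what lets a single $x$ suffice.
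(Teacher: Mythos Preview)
Your proof is correct and follows exactly the paper's own approach: the paper's proof is the single sentence ``The statement follows from Propositions \ref{propq} and \ref{propbeta} combined,'' and you have simply unpacked this deduction (together with a helpful reminder of how Proposition \ref{propq} relies on Hida's irreducibility and the $T(\zz_p)$-equivariance from Lemma \ref{Leviaction}).
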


\begin{proof}
The statement follows from Prositions \ref{propq} and \ref{propbeta} combined.
\end{proof}

Furthermore, by combining Propositions  \ref{generalq}  and \ref{propbeta} together we deduce the following weaker statement for all $f\in V$.

\begin{prop}\label{generalt} 
Let $x\in\Igusa(\Witt)$.
For any $f\in V$,  if $(g\cdot f)_{x}(t)\in \Witt[[t]]\otimes (\cL^2)^\vee $ vanish for all $g\in T(\zz_p)$, then $f=0$.
\end{prop}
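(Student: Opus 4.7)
The plan is to reduce Proposition \ref{generalt} to the already-established Proposition \ref{generalq} via the isomorphism $\beta^*_x$ of Proposition \ref{propbeta}. Essentially, the $t$-expansion is nothing more than a repackaging of the localization $\loc_x$ through an isomorphism of complete local rings, so vanishing of the $t$-expansion is equivalent to vanishing of the localization, and the result is immediate from the previous proposition.

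More concretely, first I would recall the definition $f_x(t) := (\beta^*_x)^{-1}(\loc_x(f))$, applied to $g\cdot f$ in place of $f$, to obtain the identity
\[
(g\cdot f)_x(t) = (\beta^*_x)^{-1}\bigl(\loc_x(g\cdot f)\bigr) \qquad \text{for all } g\in T(\zz_p).
\]
Since by Proposition \ref{propbeta} the map $\beta^*_x \colon \Witt[[t]]\otimes(\cL^2)^\vee \to \Ring_{\Sord, x_0}$ is an isomorphism of rings (in particular, injective on both sides and sending $0$ to $0$), we have the equivalence
\[
(g\cdot f)_x(t) = 0 \iff \loc_x(g\cdot f) = 0.
\]

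Next, assuming the hypothesis that $(g\cdot f)_x(t) = 0$ for every $g\in T(\zz_p)$, the equivalence above yields $\loc_x(g\cdot f) = 0$ in $\Ring_{\Sord, x_0}$ for every $g\in T(\zz_p)$. This is exactly the hypothesis of Proposition \ref{generalq}, which then directly gives $f=0$, completing the argument.

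Since all the genuine work (the transitive action of $T(\zz_p)$ on the connected components of $\Ig$ used in Proposition \ref{generalq}, and the Serre--Tate parametrization used in Proposition \ref{propbeta}) has already been carried out, there is no real obstacle here; the only thing to be careful about is to apply the isomorphism $\beta^*_x$ functorially with respect to the action of $T(\zz_p)$, but we do not even need the compatibility of $\beta^*_x$ under this action (Lemma \ref{Leviaction2}) since we are evaluating the $t$-expansion only at the fixed point $x$, not at $x^g$.
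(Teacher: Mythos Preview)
Your proof is correct and follows exactly the same approach as the paper, which simply states that the result follows by combining Propositions \ref{generalq} and \ref{propbeta}. You have spelled out precisely how this combination works: the isomorphism $\beta^*_x$ converts vanishing of the $t$-expansion into vanishing of the localization, and then Proposition \ref{generalq} applies directly.
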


As an immediate consequence of Proposition \ref{generalt} we obtain the following corollary.

\begin{cor} \label{cong}
For each $m\in \bN$,
given two $p$-adic automorphic forms $f,f'$ of any weight $\kappa,\kappa'$ respectively, we have that  $f\equiv f' (\mod p^m)$ if and only if \[\kappa(g)f_x(t)\equiv\kappa'(g) f'_x(t)\, (\mod p^m)\] for all $g\in T(\zz_p)$.
\end{cor}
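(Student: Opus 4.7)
The plan is to reduce Corollary \ref{cong} to a mod $p^m$ refinement of Proposition \ref{generalt}, after computing how the torus $T(\bZ_p)$ acts on $t$-expansions of weight-$\kappa$ forms.

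For $f \in V^N[\kappa]$ and $g \in T(\bZ_p)$, by definition $g \cdot f = \kappa(g) f$. Since both $\loc_x$ and $\beta_x^*$ are $\Witt$-linear and $\kappa(g) \in \Witt^\times$, this translates to $(g \cdot f)_x(t) = \kappa(g) f_x(t)$, and analogously $(g \cdot f')_x(t) = \kappa'(g) f'_x(t)$. Setting $h := f - f' \in V$, I get
\[(g \cdot h)_x(t) = \kappa(g) f_x(t) - \kappa'(g) f'_x(t) \qquad \text{for every } g \in T(\bZ_p).\]
Hence the corollary reduces to the assertion that $h \equiv 0 \pmod{p^m}$ if and only if $(g \cdot h)_x(t) \equiv 0 \pmod{p^m}$ for every $g \in T(\bZ_p)$.

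The forward implication follows immediately: if $h \in p^m V$, then the $\Witt$-linearity of the $\Levin(\bZ_p)$-action on $V$ together with the $\Witt$-linearity of $\loc_x$ and $(\beta_x^*)^{-1}$ gives $(g \cdot h)_x(t) \in p^m(\Witt[[t]] \otimes (\cL^2)^\vee)$ for every $g$. For the reverse implication I need the mod $p^m$ analogue of Proposition \ref{generalt}: under the vanishing hypothesis on $(g\cdot h)_x(t)$, the form $h$ itself must vanish modulo $p^m$.

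To establish this analogue I replay the proof of Proposition \ref{generalt} over $\Witt_m$. Lemma \ref{Leviaction} is a $\Witt$-linear identity, so the hypothesis gives $\loc_{x^g}(h) \equiv 0 \pmod{p^m}$ for every $g \in T(\bZ_p)$. Because $T(\bZ_p)$ acts transitively on the connected components of $\mathfrak{Ig}^\mathrm{ord}$ over $\mathfrak{S}^\mathrm{ord}$, as $g$ varies the lifts $x^g$ meet every component. Each component is geometrically irreducible by Hida's theorem, and this irreducibility is inherited by the reduction modulo $p^m$, since passage to $\Witt_m$ does not alter the underlying topological space. Therefore the vanishing of $\loc_{x^g}(h) \bmod p^m$ in a formal neighborhood of each component forces $h \equiv 0 \pmod{p^m}$.

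The only nontrivial input is the mod $p^m$ extension of Proposition \ref{generalt}, and I expect this to be the main (mild) obstacle; it is a direct consequence of Hida's irreducibility theorem, together with the observation that irreducibility of the $SU$-components descends to their reductions modulo $p^m$.
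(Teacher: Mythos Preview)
Your proof is correct and follows the same approach the paper intends: the paper simply says the corollary is ``an immediate consequence of Proposition \ref{generalt}'', and you have unpacked exactly what that means, namely reducing to the identity $(g\cdot h)_x(t)=\kappa(g)f_x(t)-\kappa'(g)f'_x(t)$ and then invoking a mod $p^m$ version of Proposition \ref{generalt}. Your justification of that mod $p^m$ version (replaying the argument over $\Witt_m$, using that $T(\zz_p)$ permutes components transitively and that each component of the Igusa tower is irreducible) is the right one; the only minor sharpening would be to note that injectivity of localization at the mod $p^m$ level uses not just irreducibility of the topological space but also smoothness/flatness of $\Igusa_{n,m}$ over $\Witt_m$, so that the $p$-adic filtration on $\cO_{\Igusa_{n,m}}$ has graded pieces isomorphic to $\cO_{\Igusa_{n,1}}$.
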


\begin{remark} 
A crucial advantage of Hida's realization of (vector-valued) automorphic forms as function on the infinite Igusa tower is that one can define congruences between forms without any restriction on their weights.
The $t$-expansion principle as stated above implies that those congruence relations can be detected by the coefficients of the associated power series, as in the classical setting.
Indeed, for $f$ a vector-valued automorphic form of weight $\kappa$, one may also consider the
localization of $f$ at a point $x_0$ of $\Sord$. Such a localization is an element in a free module $M_\kappa$ over $\Ring_{\Sord,x_0}$, with rank depending on the weight $\kappa$.  Even with canonical choices of trivializations of the modules $M_\kappa$'s, such an approach only allows to detect congruences when the two ranks agree, e.g. when the difference among the weights is parallel, i.e. equals $(n^\tau, \hdots, n^\tau)_{\tau \in \Sigma}$ in the notation of Section \ref{Section-weights}. This restriction is not necessary for the above corollary.
\end{remark}

\section{Restriction of $t$-expansions}\label{section on pullbacks}
As an example of how \ST coordinates may be used to understand certain operators on $p$-adic automorphic forms, we consider the case of the restriction map from our unitary Shimura variety to a lower dimensional unitary Shimura subvariety, i.e. the pullback map on global sections of the automorphic sheaves.

\subsection{Description of the geometry}
We start by introducing the PEL data defining this setting. We maintain the notation introduced in Section \ref{unitarygroups-section}.

Let $L=\oplus_{i=1}^s W_i$ be a self-dual decomposition of the lattice $L$. We denote by ${\langle, \rangle}_i$ the pairing on $W_i$ induced by $\langle,\rangle$ on $L$, and define
$GU_i=GU(W_i,\langle, \rangle_i)$, a unitary group of signature
$\left(\siga_{\tau,i},\sigb_{\tau,i}\right)_{\tau \in \Sigma_K}$. Then, the signatures $\left(\siga_{\tau,i},\sigb_{\tau,i}\right)_{i=1,\dots ,s}$ form a partition of the signature $\left(\siga_{\tau},\sigb_{\tau}\right)$.

We define $G'\subset \prod_iGU_i$ to be the subgroup of elements with the same similitude factor; i.e. if $\nu_i:GU_i\to \bG_m$ denote the similitude factors, then $G'=\nu^{-1}(\bG_m)$, for $\nu=\prod_i\nu_i$ and $\bG_m\subset \bG_m^s$ embedded diagonally. Then, there is a natural closed immersion $G'\rightarrow GU$ of algebraic groups which is compatible with the partition of the signature.  That is, the above data defines a morphism of Shimura data $(G',X')\rightarrow (GU,X)$, for $X$ (resp. $X'$) the $GU(\bR)$- (resp. $G'(\bR)$-) conjugacy class of  the homomorphism  $h:\mathbb{C}\to \mathrm{End}_{K\otimes_\mathbb{Z}\mathbb{R}}(L\otimes_{\mathbb{Z}}\mathbb{R})$, where the map $G'\rightarrow GU$ is a closed immersion.

As in section  \ref{unitarygroups-section}, let $\Levin$ be $\prod\limits_{\tau \in \arch} \GL_{\siga_{\tilde\tau}} \times \GL_{\sigb_{\tilde\tau}}$, which can be identified with a Levi subgroup of $U$ over $\bZ_p$. Similarly, we define $\Levin'$ to be $\prod\limits_{\tau \in \arch, 1 \leq i \leq s} \GL_{\siga_{\tilde\tau,i}} \times \GL_{\sigb_{\tilde\tau,i}}$ corresponding to the partition $\left(\siga_{\tau,i}, \sigb_{\tau,i}\right)_{i=1,\dots ,s}$ of the signature $(a_{+\tau},a_{-\tau})$, which can be identified over $\bZ_p$ with a Levi subgroup of  $G' \cap U$.  Note that we have a closed immersion $\Levin' \ra \Levin$ that comes from the natural inclusion of $G'$ in $GU$ over $\bZ_p$. We let $B'$ be the intersection of the Borel $B \subset \Levin$ with $\Levin'$, and (by abuse of notation) we denote by $N$ and $N'$ the $\zz_p$-points of the unipotent radicals of the Borel subgroups $B$ and $B'$, respectively. Then $N'=N\cap \Levin'(\zz_p)$. Furthermore, we may identify the maximal torus $T$ of $\Levin$ with a maximal torus $T'$ in $\Levin'$.  
Note that 
for any character $\kappa$ of $T=T'$, if $\kappa$ is dominant in $ X^*(T)$, then it is also dominant in $X^*(T')$, but the converse is false in general.

The morphism of Shimura data $(G',X')\rightarrow (GU,X)$ described above defines a morphism of Shimura varieties,
$\theta: M'\hookrightarrow M,$
from the Shimura variety $M'$ associated with $G'$ into the Shimura variety $M$ associated with $GU$, which is a closed immersion since $G'\rightarrow GU$ is a closed immersion (\cite[Thm 1.15]{deligne}).  The morphism $\theta$ extends to a map between the canonical integral models (which with abuse of notation we still denote by $\theta$)
\[
\theta: \cM'\hookrightarrow \cM.
\]
A point $x$ in $\cM$ is in the image of $\theta$ if and only if the corresponding abelian variety $\underline{\cA}_x$ decomposes as a cartesian product of abelian varieties, with dimensions and additional structures prescribed by the above data.

\begin{remark}
We describe an example.  Let $K=F$ be a quadratic imaginary field, $V_n$ be an $n$-dimensional $\cO_F$-lattice equipped with a Hermitian pairing $\langle, \rangle$, and assume that the associated group $GU_n=GU(V_n,\langle, \rangle)$ has real signature $(1,n-1)$.
 We fix the partition $\{(1,n-2), (0,1)\}$ of the signature $(1,n-1)$, and we realize $V_{n-1}$ as a direct summand of $V_n$.  We choose a neat level $\Gamma$ hyperspecial at  $p$ and denote by $\Sh_n$ the simple Shimura variety of level $\Gamma$ associated with $GU_n$.  $\Sh_n$ is a classifying space for polarized abelian varieties of dimension $n$, equipped with a compatible action of $\cO_F$.  We write $\cA_n$ for the universal abelian scheme on $\Sh_n$. Then, for each elliptic 
curve $E_0/\zz_p$ with complex multiplication by $\cO_{{F}}$  (corresponding to the choice of a connected component of the $0$-dimensional Shimura variety associated with $GU(0,1)$), the morphism $\theta$ is defined by $\theta^*\cA_n=\cA_{n-1}\times E_0$, and its image $\theta(\Sh_{n-1})$ is a divisor in $\Sh_n$. 
\end{remark}

We now assume that $p$ splits completely in all the reflex fields $E_i$ (and thus also in $E$), where $E_i$ is the reflex field for the integral model $\cM_i$ associated with the group $GU_i$. Then the ordinary loci $\cM^{\rm ord}_i$, $\cM^{' \rm ord}$ and $\Mord$ are non-empty. In particular, a split abelian variety $A=\prod_iA_i$  is ordinary if and only if each of its constituents $A_i$ are ordinary.

Each connected component $\cS'$ of $\cM'$ can be identified with a product of connected components $\cS_i$ of $\cM_i$.  We choose a connected component $\cS'$ of $\cM'$, and identify $\cS'=\prod_{i=1}^s \cS_i$. Then, there is a unique connected component $\cS $ of $\cM$ such that $\theta(\cS')\subset \cS$.
We write $\cS^{\rm ord}_i$ (resp. ${\Sordprime}$ and $ \Sord$) for the ordinary locus of $\cS_i$ (resp. $\cS'$ and $\cS$). Thus,   $\theta(\Sordprime)\subset \Sord$, and we may identify $\Sordprime=\prod_i\cS^{\rm ord}_i$.
Corresponding to our choice of connected components, there are two $\cO_{F,p}$-linear decompositions $\cL^+=\oplus_{i=1}^s\fL^+_i$ and $\cL^-=\oplus_{i=1}^s\fL^-_i$, the ranks of the summands determined by the partition $\left(\siga_i=\sum_{\tau \in \Sigma}\siga_{\wt \tau,i},\sigb_i=\sum_{\tau \in \Sigma}\sigb_{\wt \tau,i}\right)_{i=1,\dots ,s}$ of the signature $(a_+=\sum_{\tau \in \Sigma}\siga_{\wt \tau},a_-=\sum_{\tau \in \Sigma}\sigb_{\wt \tau})$. For each $i=1,\dots, s$, we write $\fL_i=\fL_i^+\oplus \fL_i^-$. Thus, $\cL=\oplus_{i=1}^s\fL_i$.

\begin{prop}

For every level $n\geq 1$, the homomorphism $\theta:\Sordprime=\prod_i\cS^{\rm ord}_i\to\Sord$ lifts canonically to a compatible system of homomorphisms $\Theta=(\Theta_{n})_n$, among the Igusa towers,
\[\Theta_n: \mathrm{Ig}_n^{' \mathrm{ord}}:=\prod_i\Igusa_{n,i} \rightarrow \Igusa_n,\]
where $\Igusa_{n,i}$ denotes the $n$-th level of the Igusa tower over $\Sord_i$,  for each $i=1, \dots , s$.

\end{prop}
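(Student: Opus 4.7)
My plan is to exhibit $\Theta_n$ directly using the moduli-theoretic description of the Igusa tower given in \eqref{equation-Igusa-structure}: a point of $\Igusa_n$ over an $\Sord$-scheme $S$ consists of the underlying tuple $\uA_S$ pulled back from $\Sord$ together with an $\cO_K$-linear closed immersion $\iota: \cL \otimes_\bZ \mu_{p^n} \hookrightarrow \cA_S[p^n]$. Analogously, a point of $\prod_i \Igusa_{n,i}$ over $\Sordprime = \prod_i \cS^{\mathrm{ord}}_i$ consists of an $s$-tuple $((\uA_{S,i}, \iota_i))_{i=1}^s$, where each $\iota_i: \fL_i \otimes_\bZ \mu_{p^n} \hookrightarrow \cA_{S,i}[p^n]$ is an $\cO_K$-linear closed immersion.

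The key input is that $\theta$ was defined on moduli so that $\theta^*\cA_{\Sord} = \prod_i \cA_i$ with the $\cO_K$-action respecting this product decomposition; consequently $\cA_{\theta(S)}[p^n] = \bigoplus_i \cA_{S,i}[p^n]$ as $\cO_K$-group schemes. Since the decomposition $\cL = \bigoplus_{i=1}^s \fL_i$ was chosen compatibly with the partition of the signature (using that $p$ splits completely in every $E_i$, so that the splitting $\cL^{\pm} = \bigoplus_i \fL_i^\pm$ is a splitting of $\cO_{K,p}$-modules), the direct sum
\[
\iota := \bigoplus_{i=1}^s \iota_i : \cL \otimes_\bZ \mu_{p^n} \longrightarrow \cA_{\theta(S)}[p^n]
\]
is a well-defined $\cO_K$-linear morphism of finite flat group schemes. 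I would then set $\Theta_n\bigl(((\uA_{S,i}, \iota_i))_i\bigr) := \bigl(\theta(\uA_{S,1}, \dots, \uA_{S,s}),\, \iota\bigr)$; functoriality in $S$ is automatic and yields the desired morphism of schemes.

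What remains is to verify that $\iota$ is a closed immersion, which follows from the general fact that a direct sum of closed immersions of finite flat group schemes landing in distinct direct summands is again a closed immersion. Compatibility of the resulting $\Theta = (\Theta_n)_n$ with the projections $\Igusa_{n+1} \to \Igusa_n$ and $\prod_i \Igusa_{n+1,i} \to \prod_i \Igusa_{n,i}$ is immediate, since on both sides the transition map is restriction from $\mu_{p^{n+1}}$ to $\mu_{p^n}$ and direct sum commutes with restriction; canonicity is built into the construction. I do not anticipate a genuine obstacle here: the content of the statement rests entirely on the compatibility of the lattice decomposition $\cL = \bigoplus_i \fL_i$ with the $\cO_K$-action and with the product decomposition of $\theta^*\cA_{\Sord}$, both of which are already encoded in the PEL data defining $\theta$.
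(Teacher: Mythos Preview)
Your proposal is correct and follows essentially the same approach as the paper: both construct $\Theta_n$ by taking the direct sum $\iota=\bigoplus_{i=1}^s\iota_i$ of the Igusa structures and invoking the moduli interpretation of $\Igusa_n$. The paper phrases this via the universal property (defining $\iota$ on the universal object over $\Sordprime$), whereas you describe it pointwise and add the verifications that $\iota$ is a closed immersion and that the $\Theta_n$ are compatible in $n$; these are details the paper leaves implicit, but the construction is the same.
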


\begin{proof}  For each $i=1,\dots ,s$, let $\underline{\cA}_i$ denote the universal abelian scheme over  $\Sord_i$, and  $\iota_i:\mu_{p^n}\otimes\fL_i \hookrightarrow \cA_i[p^n]$ denote the universal Igusa structure of level $n$ on $\cA_i$. By the universal property 
 of $\Igusa_n$ {(i.e. using the fact that the Igusa tower represents the functor classifying ordinary points with additional structure)}, constructing a morphism $\Theta_n$ lifting $\theta$ is equivalent to defining an Igusa structure of level $n$ on  the abelian scheme $\theta^*\underline{\cA}=\prod_i\underline{\cA}_i$ over $\Sordprime$. We define $\iota:\mu_{p^n}\otimes\cL \hookrightarrow \theta^*\cA[p^n]$ as $\iota=\oplus_{i=1}^s \iota_i$.
\end{proof}

\begin{remark}
For each integer $n\geq 0$, the morphism $\Theta_n$ defines a closed embedding of the Igusa covers over $\Sordprime$, \[\mathrm{Ig}_n^{' \mathrm{ord}}\hookrightarrow \Sordprime\times_{\Sord}\Igusa_n.\]   
Indeed, since both projections  $\mathrm{Ig}_n^{' \mathrm{ord}}\to\Sordprime$ and $\Sordprime\times_{\Sord}\Igusa_n \to \Sordprime$ are finite and \'etale, it suffices to check that the map  is one-to-one on points.
Given a point $x_0$ of $\Sordprime$, corresponding to a  split ordinary abelian variaty  $\underline{A}=\prod_i \underline{A}_i$. A point $x$ of  $\Igusa_n$, lying above $x_0$, is in the image of $\Theta_n$ if and only if the corresponding 
Igusa structure $\iota_x:\mu_{p^n}\otimes\cL \hookrightarrow A[p^n]$  satisfies the conditions $\iota_x(\mu_{p^n}\otimes\fL_i)\subset A_i[p^n]$, for all $i=1, \dots , s$. Then, $\iota_x=\oplus_i \iota_{x,i}$, for $\iota_{x,i}:\mu_{p^n}\otimes\fL_i \to A_i[p^n]$ the restrictions of $\iota_x$, i.e. there is a unique point $y\in \mathrm{Ig}_n^{' \mathrm{ord}}$ such that $\Theta_n(y)=x$.

We observe that,  for non-trivial partitions of the signature $(a_{+\tau},a_{-\tau})_{\tau \in \archK}$,  such a closed immersion is not an isomorphim.  In fact, given a point $x$  in $\Theta_n( \mathrm{Ig}_n^{' \mathrm{ord}})\subset\Sordprime\times_{\Sord}\Igusa_n$,
for any $g\in \Levin(\zz_p)$, the point $x^g$ is in the image of $\Theta_n$ if and only if $g\in \Levin'(\zz_p)$. 

\end{remark}

\subsection{Restriction of automorphic forms} 
For $\kappa$ a dominant character of $T$, let $\rho_\kappa$ denote the irreducible representation of $\Levin_{\bZ_p}$ with highest weight $\kappa$ described in Section \ref{Section-weights}. 
Similarly, for $\kappa'$ a dominant character of $T'$, let $\rho'_{\kappa'}$ denote the irreducible representation of  $\Levin'_{\bZ_p}$ with highest weight $\kappa'$.

Assume that the restriction of $\rho_\kappa$ from $\Levin_{\bZ_p}$ to $\Levin'_{\bZ_p}$ has an irreducible {quotient} isomorphic to $\rho'_{\kappa'}$ (e.g., when $\kappa'=\kappa$), and fix a projection $\pi_{\kappa,\kappa'}:\rho_\kappa\to\rho'_{\kappa'}$. If $\kappa'=\kappa ^\sigma$ for some $\sigma$ in the Weyl group $W_\Levin(T)$, then we choose $\pi_{\kappa,\kappa'}$ to satisfy the equality $\ell_{\rm can}^\kappa=\ell_{\rm can}^{\kappa'}\circ \pi_{\kappa,\kappa'}\circ g_\sigma$, where $g_\sigma\in N_\Levin(T)(\zz_p)$ is a lifting of $\sigma$, and where we view $\ell_{\rm can}^\kappa$ as a functional on the space of $\rho_\kappa$ by identifying the space of $\rho_\kappa$ with the space of $\Ind_B^H(\kappa)$ on which $H$ acts by the usual left translation action (described in Section \ref{Section-weights}) precomposed with transpose-inverse.
In other words, $\ell_{\rm can}^\kappa$ is the unique (up to multiple) $N^-$-invariant functional on $\rho_\kappa$.

Recall that $\cE_\kappa=\cE_{\cpct,\kappa}$ denotes the automorphic sheaf of weight $\kappa$ over $\cM$, as defined in Section \ref{classical-aut-forms}, and denote by $\cE'_{\kappa'}$ the automorphic sheaf of weight $\kappa'$ over $\cM'$. Then, on $\cM'$ we have a canonical morphism of sheaves $r_{\kappa,\kappa'}: \theta^*\cE_\kappa\to \cE'_{\kappa'}$. 

\begin{defn}
We define
\[{\rm res}_{\kappa,\kappa'}:= r_{\kappa,\kappa'} \circ\theta^* :  H^0(\cM,\cE_\kappa) \to H^0(\cM',\theta^*\cE_\kappa) \to H^0(\cM',\cE'_{\kappa'}).\]
We call ${\rm res}_{\kappa,\kappa '}$  the {\it weight $(\kappa,\kappa ')$-restriction}.  
\end{defn}

In the following, for $\kappa'=\kappa$, we write ${\rm res}_\kappa={\rm res}_{\kappa,\kappa '}$ and call it the {\em  weight $\kappa$-restriction}.

By abuse of notation we still denote by ${\rm res}_{\kappa,\kappa'}$ (and ${\rm res}_\kappa$) the restriction of this map to the space of sections over the ordinary loci; i.e.
\[{\rm res}_{\kappa,\kappa' }:= r_{\kappa,\kappa'} \circ \theta^*: H^0(\Sord,\cE_\kappa) \to H^0(\Sordprime,\theta^*\cE_\kappa) \to H^0(\Sordprime,\cE'_{\kappa'}).\]

Let $V$, $V'$ denote the spaces of global functions of the Igusa towers $\Igusa/\Sord $ and ${\Igusa}'/\Sordprime$, respectively, as introduced in Section \ref{padicaut-section}.  We write $\Theta^*$ for the pullback on global functions of the Igusa towers, \[\Theta^*:V\to{V'}.\] 
In the following, we refer to $\Theta^*$ as {\it restriction}.
Note that $\Theta $ maps $V^N$ and $V^N[\kappa]$, for any weight $\kappa$, to ${V'}^{N'}$ and ${V'}^{N'}[\kappa]$, respectively. With abuse of notation, we will still denote by $\Theta^*$ the restrictions of $\Theta^*$  to $V^N$ and $V^N[\kappa]$.

Finally, for any $\kappa,\kappa'$, we write $\Psi_\kappa: H^0(\Sord,\cE_\kappa)\to V^N$ and  $\Psi'_{\kappa'}: H^0(\Sordprime,\cE'_{\kappa})\to {V'}^{N'} $ for the inclusions defined in Section \ref{lcan}.  

\begin{prop}\label{propres}
For  $\kappa$ a dominant character of $T$, and any $f\in  H^0(\Sord,\cE_\kappa)$: 
$$\Theta^*(\Psi_\kappa (f))=\Psi'_\kappa({\rm res}_\kappa (f)). $$
\end{prop}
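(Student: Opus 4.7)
The plan is to verify the identity by evaluating both sides on an arbitrary test point $(\uA', j')\in\Igusa'(R)$, where $\uA'=\prod_i \uA_i$ is a split ordinary abelian scheme with the PEL structures required by $G'$ over a $\Witt$-algebra $R$, and $j':\cA'[p^n]^{\et}\isomto(\bZ/p^n\bZ)\otimes\cL^\vee$ is an Igusa structure of level $n$. The two key ingredients will be the explicit description of $\Theta_n$ in terms of block-diagonal Igusa structures and the normalization of $\pi_{\kappa,\kappa}$ fixed in the definition of ${\rm res}_\kappa$.

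For the left hand side, I would first recall that by construction $\Theta_n$ sends $(\uA', j')$ to $(\theta^*\uA', j)$ with $j=\oplus_i j'_i$ block-diagonal under the splitting $\cL=\oplus_i \fL_i$. Converting $j$ to a trivialization $\lambdaiso$ of $\uo_{\theta^*\cA/R}$ via the canonical ordinary isomorphism $\uo\cong \cA[p^n]^{\et}\otimes\cO$ in Equation \eqref{LieA-equ}, the product structure $\theta^*\cA=\prod_i \cA_i$ implies that $\uo_{\theta^*\cA/R}$ splits compatibly and that $\lambdaiso=\oplus_i \lambdaiso_i$ is itself block-diagonal; equivalently, $\lambdaiso$ is the image of the trivialization $\lambdaiso'$ attached to $j'$ under the natural inclusion $\cE'_{\uA'/R}\hookrightarrow \cE_{\theta^*\uA'/R}$. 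Unwinding the definition of $\Psi_\kappa$ in Section \ref{lcan}, this yields
$$\Theta^*(\Psi_\kappa(f))(\uA', j') = \ell_{\rm can}\!\bigl(f(\theta^*\uA',\lambdaiso)\bigr),$$
where $\ell_{\rm can}$ is the canonical $N$-invariant functional on $\rho_\kappa$.

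For the right hand side, I would describe $r_{\kappa,\kappa}$ at the level of values of automorphic forms: it sends a value $v\in \rho_\kappa$ of a local section of $\theta^*\cE_\kappa$ at $(\uA',\lambdaiso')$, viewed via $\cE'\hookrightarrow \theta^*\cE$ as a value at $(\theta^*\uA',\lambdaiso)$, to its image under the $\Levin'$-equivariant projection $\pi_{\kappa,\kappa}:\rho_\kappa \twoheadrightarrow \rho'_\kappa$. Accordingly
$$({\rm res}_\kappa f)(\uA',\lambdaiso')=\pi_{\kappa,\kappa}\!\bigl(f(\theta^*\uA',\lambdaiso)\bigr),$$
and applying the analogous $\Psi'_\kappa$ for $\Sordprime$ produces
$$\Psi'_\kappa({\rm res}_\kappa(f))(\uA', j')=\ell'_{\rm can}\!\bigl(\pi_{\kappa,\kappa}(f(\theta^*\uA',\lambdaiso))\bigr),$$
where $\ell'_{\rm can}$ is the canonical $N'$-invariant functional on $\rho'_\kappa$.

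To conclude I would invoke the normalization of $\pi_{\kappa,\kappa}$: the defining condition $\ell_{\rm can}^\kappa=\ell_{\rm can}^{\kappa'}\circ\pi_{\kappa,\kappa'}\circ g_\sigma$ applied with $\kappa'=\kappa$, $\sigma=\id$ (so $g_\sigma=1$) reads $\ell_{\rm can}=\ell'_{\rm can}\circ\pi_{\kappa,\kappa}$, which immediately identifies the two expressions. The substantive point to verify is that the trivialization $\lambdaiso$ obtained from the block-diagonal Igusa structure $\oplus_i j'_i$ on $\theta^*\cA$ via Equation \eqref{LieA-equ} really coincides with the image of $\lambdaiso'$ under $\cE'\hookrightarrow \theta^*\cE$; I expect this to be the main check, but it should follow routinely from the functoriality of the canonical isomorphism $\uo\cong \cA[p^n]^{\et}\otimes\cO$ with respect to products of ordinary abelian schemes, so no genuine obstacle is anticipated.
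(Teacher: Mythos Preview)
Your proposal is correct and follows essentially the same approach as the paper: the paper's proof simply records the two ingredients you identify, namely the commutativity $\Theta^*\circ j^*=j^*\circ\theta^*$ (which is precisely what your check on block-diagonal Igusa structures and trivializations amounts to) and the factorization $\ell_{\rm can}=\ell'_{\rm can}\circ\pi_{\kappa,\kappa}$. Your write-up is a faithful and more explicit unpacking of the paper's two-line argument.
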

\begin{proof} 
The statement follows from the equality 
$\Theta^*\circ j^*=j^*\circ \theta^*$, together with the observation that, for any dominant weight $\kappa$ of $T$, the 
 functional $\ell_{\rm can}$ appearing in the definitions of $\Psi_\kappa$ (in Section \ref{lcan}) factors by our choice via the projection $\pi_{\kappa,\kappa}: \rho_\kappa\rightarrow \rho'_\kappa$. 
\end{proof}

 Note that if $\kappa'\neq \kappa$, then the maps
$\Theta^*\circ  \Psi_\kappa $ and $\Psi'_{\kappa'} \circ {\rm res}_{\kappa,\kappa'} $ do not agree, as a consequence of Proposition \ref{propres} and the injectivity of the $\Psi$ (see Proposition \ref{ten}). Instead, we have the following result.

\begin{prop} \label{propres2} The notation is as above.
Assume the weight $\kappa'$ is conjugate to $\kappa$ under the action of the Weil group $W_\Levin(T)$, i.e. $\kappa'=\kappa ^\sigma$ for some $\sigma \in W_\Levin(T)$, and choose $g_\sigma\in N_\Levin(T)(\zz_p)$ lifting $\sigma.$ 

Then, for all $f\in H^0(\Sord,\cE_\kappa)$, we have 
\[\Theta^*(g_\sigma\cdot \Psi_\kappa (f))=\Psi'_{\kappa'} ( {\rm res}_{\kappa,\kappa'}(f)) .\]
\end{prop}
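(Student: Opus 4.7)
The plan is to mimic the proof of Proposition \ref{propres}, incorporating the extra twist by $g_\sigma$ which is precisely what links the two weights $\kappa$ and $\kappa'$. First, I would evaluate both sides of the desired equality at an arbitrary point $(\uA', j')$ of the Igusa tower over $\Sordprime$, and set $(\uA, j) := \Theta(\uA', j')$ to be its image in $\Igusa$. Unfolding the left-hand side, from the definitions of $\Theta^*$ and of the $\Levin(\zz_p)$-action on $V$, followed by the definition of $\Psi_\kappa$ in terms of $\ell_{\rm can}^\kappa$, and finally the $\rho_\kappa$-equivariance of $f$ from Definition \ref{algauto-defi1}, I obtain
$$
\Theta^*(g_\sigma \cdot \Psi_\kappa(f))(\uA', j') \;=\; \ell_{\rm can}^\kappa\!\bigl(\rho_\kappa\bigl(({}^tg_\sigma)^{-1}\bigr)\, f(\uA, j)\bigr).
$$

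Next, I would unfold the right-hand side. By construction, ${\rm res}_{\kappa,\kappa'}(f) = r_{\kappa,\kappa'}(\theta^*f)$, where $r_{\kappa,\kappa'}$ is the sheaf morphism induced by the fixed projection $\pi_{\kappa,\kappa'}\colon \rho_\kappa \to \rho'_{\kappa'}$, so evaluating at $(\uA', j')$ and applying $\ell_{\rm can}^{\kappa'}$ yields
$$
\Psi'_{\kappa'}({\rm res}_{\kappa,\kappa'}(f))(\uA', j') \;=\; \ell_{\rm can}^{\kappa'}\!\bigl(\pi_{\kappa,\kappa'}(f(\uA, j))\bigr).
$$
Setting $v := f(\uA, j)$, the equality to be proved therefore reduces to the vector-level identity
$$
\ell_{\rm can}^\kappa\!\bigl(\rho_\kappa\bigl(({}^tg_\sigma)^{-1}\bigr)\, v\bigr) \;=\; \ell_{\rm can}^{\kappa'}\!\bigl(\pi_{\kappa,\kappa'}(v)\bigr),
$$
which is exactly the defining property of $\pi_{\kappa,\kappa'}$, namely $\ell_{\rm can}^\kappa = \ell_{\rm can}^{\kappa'}\circ \pi_{\kappa,\kappa'}\circ g_\sigma$, read through the convention by which $g_\sigma \in N_\Levin(T)(\zz_p)$ acts on the space of $\rho_\kappa$. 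Thus, once the conventions are consistently set, the statement is immediate from the choice of $\pi_{\kappa,\kappa'}$.

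The main obstacle is purely bookkeeping: one must carefully match the several conventions at play, namely the right action of $\Levin(\zz_p)$ on Igusa structures (precomposition by ${}^tg$), the resulting action on the vector-valued avatar of $f$ via $\rho_\kappa$, and the interpretation of the symbol ``$g_\sigma$'' in the defining property of $\pi_{\kappa,\kappa'}$. Beyond this, no further geometric input is required: the underlying geometric compatibility $\Theta^* \circ j^* = j^* \circ \theta^*$ is already the content used in the proof of Proposition \ref{propres}, and the novelty here is entirely concentrated in the linear-algebraic identity supplied by the normalization of $\pi_{\kappa,\kappa'}$ via $g_\sigma$.
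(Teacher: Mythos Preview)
Your proposal is correct and follows essentially the same approach as the paper: the paper's proof simply says that the argument of Proposition \ref{propres} applies verbatim, the only new ingredient being the normalization $\ell_{\rm can}^\kappa = \ell_{\rm can}^{\kappa'}\circ \pi_{\kappa,\kappa'}\circ g_\sigma$, which is exactly the vector-level identity you isolate. Your write-up is more explicit than the paper's two-line proof, and your cautionary remark about matching the several conventions (the ${}^tg$ action on Igusa structures versus the action in the defining property of $\pi_{\kappa,\kappa'}$) is well taken, but the strategy is the same.
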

\begin{proof}
The same argument as in the proof of Proposition \ref{propres} applies here, because we chose $\pi_{\kappa,\kappa'}$ such that
the 
functional $\ell_{\rm can}$ appearing in the definition of $\Psi_\kappa $ factors via the map $\pi_{\kappa,\kappa'} \circ g_\sigma : \rho_\kappa \to\rho_\kappa \to \rho'_{\kappa '}$. 
\end{proof}

Our goal is to give a simple description of $\Theta^*$ in \ST coordinates, and deduce an explicit criterion for the vanishing of the restriction of a $p$-adic automorphic form in terms of vanishing of some of the coefficients in its $t$-expansion.

Let $x_0\in\Sordprime (\Witt)$, $x_0=(x_0^i)_{i=1, \dots , s}$ where  $x_0^i\in \Sord_i (\Witt)$, for each $i=1, \dots ,s$. We write $\bar{x}_0$ and  $\theta(\bar{x}_0)$ for the reductions modulo $p$ of $x_0$ and of $\theta(x_0)\in\Sord (\Witt)$, respectively. Let $\underline{A}=\underline{\cA}_{\theta(\bar{x}_0)}=\underline{\cA}_{\bar{x}_0}=\prod_i\underline{A}_i$ be the corresponding split ordinary abelian variety over $\fpb$. We deduce that the physical Tate module of $A$ decomposes as \[T_pA(\fpb)=\oplus_i T_p A_i (\fpb),\] 
and by linearity we also have  $T_{\fP_j}A(\fpb)=\oplus_i T_{\fP_j} A_i (\fpb)$,
for each $j=1, \dots , r$.

\begin{prop}
\label{STinclusion} The notation is the same as above.
Under the isomorphism in Proposition \ref{STthm},  $x\mapsto q_x$,  the map
$ \theta_{\bar{x}_0}: {\Sordprime}^\wedge_{\bar{x}_0}\to {\Sord}_{\theta(\bar{x}_0)}^\wedge$ is the closed immersion corresponding to the collection of the natural inclusions
\[\oplus_{i=1}^s{\rm Hom}_{\zz_p}(T_{\fP_j}A_i(\fpb)\otimes T_{\fP_j^c}A_i(\fpb),\hat{\bG}_m)\subset {\rm Hom}_{\zz_p}(T_{\fP_j}A(\fpb)\otimes T_{\fP_j^c}A(\fpb),\hat{\bG}_m),\] 
for $j=1,\ldots , r$. 
\end{prop}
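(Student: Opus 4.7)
The plan is to identify the image of $\theta_{\bar x_0}$ inside the Serre-Tate deformation space of $A$ by using Drinfeld's rigidity criterion (Theorem \ref{thmcon}) applied to the projection and inclusion morphisms of the factors $A_i$ of $A=\prod_i A_i$.

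First I would recall the moduli-theoretic description of the closed immersion $\theta$: a point of ${\Sord}^{\wedge}_{\theta(\bar x_0)}$ lies in the image of $\theta_{\bar x_0}$ precisely when the corresponding deformation $\cA/R$ of $A$ decomposes as a product $\cA\cong \prod_i \cA_i$, where each $\cA_i$ is a deformation of $A_i$ compatible with the additional PEL structures prescribed by the partition of the signature (and these deformations exhaust $\Sordprime$ by its moduli description). Thus, the task reduces to characterizing, in terms of the Serre-Tate bilinear form $q_{\cA/R}$, those deformations that split as a product of deformations of the $A_i$.

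Next I would apply Theorem \ref{thmcon} to the idempotent endomorphisms $e_i\colon A\to A$ projecting onto $A_i$ (equivalently, to the inclusions $A_i\hookrightarrow A$ and projections $A\twoheadrightarrow A_i$). A deformation $\cA/R$ splits as $\prod_i \cA_i$ if and only if all of the $e_i$ lift to $\cA$, which by Theorem \ref{thmcon} is equivalent to the identities $q_{\cA/R}\circ (1\times e_i^\vee)=q_{\cA/R}\circ (e_i\times 1)$ for each $i$. Under the decomposition $T_pA(\fpb)=\oplus_i T_pA_i(\fpb)$ (and the analogous one for the dual), this condition is precisely that $q_{\cA/R}$ vanishes on the mixed summands $T_pA_i(\fpb)\otimes T_pA_j^\vee(\fpb)$ for $i\neq j$, i.e.\ that $q_{\cA/R}$ lies in the subspace $\oplus_i \mathrm{Hom}_{\bZ_p}(T_pA_i(\fpb)\otimes T_pA_i^\vee(\fpb),\hat{\bG}_m)$. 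Combining this with the identification from Proposition \ref{STthm} (which, via $q_x=(1\times T_p(\lambda))\circ q_{\cA_x}$, already cut out the symmetric $c$-Hermitian forms indexed by the primes $\fP_j$ of $K$ above $p$) yields the asserted inclusion, once one checks that the polarization and $\cO_K$-action of $A$ restrict compatibly to each $A_i$ (which is built into the definition of the morphism of PEL data $(G',X')\hookrightarrow(GU,X)$).

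The main obstacle is the bookkeeping: one must verify that the Serre-Tate $q$-form is compatible with the direct sum decompositions both in the ``horizontal'' direction (indexed by the primes $\fP_j$ of $K$ above $p$, coming from the CM structure as in Proposition \ref{STthm}) and in the ``vertical'' direction (indexed by the factors $A_i$), and that these two decompositions are orthogonal to one another in the sense that the resulting direct sum of ${\rm Hom}$-groups indexed by pairs $(i,j)$ matches the claimed description. This compatibility is ultimately a direct consequence of the $\cO_K$-linearity of both the Serre-Tate isomorphism and the idempotents $e_i$, together with the fact that $T_p(\lambda)$ restricts to the polarizations $T_p(\lambda_i)$ on each factor.
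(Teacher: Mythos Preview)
Your approach is correct and essentially identical to the paper's: both reduce to showing that a deformation of $A$ splits as $\prod_i\cA_i$ if and only if the idempotents $e_i$ lift, then invoke Theorem~\ref{thmcon} to translate this into the vanishing of $q_x$ on the mixed summands $T_{\fP_j}A_i\otimes T_{\fP_j^c}A_k$ for $i\neq k$. The paper spells out one detail you leave implicit, namely that the lifts $\tilde e_i$ are automatically orthogonal idempotents (by Drinfeld's uniqueness), so that $\cA_x=\prod_i\tilde e_i(\cA_x)$; but this is exactly the argument your sketch points toward.
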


\begin{proof}
By the definition of $\theta$, a point $x\in {\Sord}_{\theta(\bar{x}_0)}^\wedge$ is in the image of $\theta_{\bar{x}_0}$ if and only if the corresponding abelian variety $\underline{\cA}_x$ decomposes as a cartesian product of abelian varieties with additional structures, compatibly with the decomposition $\underline{A}=\prod_i\underline{A}_i $.  We argue that  such a decomposition exists if and only if the endomorphisms $e_i:A\rightarrow A_i\hookrightarrow A$ lift to $\underline{\cA}_x$.

Clearly, if the decomposition lifts to $\ul\cA_x$ so do the endomorphisms $e_i$ for all $i=1, \dots , s$. Vice versa, let us assume there exist endomorphism $\tilde{e}_i$  of $\cA_x$ lifting the $e_i$. By Theorem \ref{thmcon} the endomorphisms $\tilde{e}_i$ are unique, thus in particular they are orthogonal  idempotents (since the $e_i$ are) and  the identity of $\cA_x$ decomposes as $1_{\cA_x}=\sum_i \tilde{e}_i$ (lifting the equality $1_A=\sum_i e_i$). We deduce that for each $i$, the image $\cA_i=\tilde{e}_i(\cA_x)$ is an abelian subvariety of $\cA_x$ lifting $A_i$, and $\cA_x=\prod \cA_i$. Furthermore, for each $i$, the additional structures on $\cA_x$ define unique additional structures on $\cA_i$ (by the properties of the cartesian product) 
which lift those on $A_i$.  To conclude, we observe that since such lifts are unique, the decomposition of $\cA_x$ is compatible with the additional structures, i.e. $\underline{\cA}_x=\prod_i \underline{\cA_i}$ lifting the decomposition of $\underline{A}$. Furthermore,  such lifting is unique.

Finally, by Theorem \ref{thmcon}, for each $i=1,\dots , s$, the endomorphism $e_i$ of $A$ lifts to $\cA_x$ if and only if $q_{\cA_x}\circ (1\times e_i^\vee)=q_{\cA_x}\circ (e_i\times 1)$.
Equivalently, if and only if \[q_x \circ (1\times e_i)=q_x\circ (e_i\times 1)\] (recall $q_x=q_{\cA_x}\circ (1\times T_p(\lambda))$ and under our assumption $e_i^\vee\circ \lambda=\lambda\circ e_i$). We deduce that this is the case if and only if for all $j=1, \dots ,r $ and any $i, k=1,\dots , s$, the restriction of the bilinear form $q_x$ to the subspaces $T_{\fP_j}A_i(\fpb) \otimes T_{\fP_j^c}A_k(\fpb)$\ of $ T_{\fP_j}A(\fpb)\otimes T_{\fP_j^c}A(\fpb)$ vanishes unless $i=k$.
\end{proof}

For each $i=1,\dots ,s$, we define $\cO_{K, p}$-modules
$\fL^2_i\subset \fL_i^{\otimes 2}$ similarly to $\cL^2\subset \cL^{\otimes 2}$ in Section \ref{STthy}.
We consider the $\cO_{K,p}$-module $\fL^2=\oplus_{i =1}^s\fL^2_i$. By the definition $\fL^2$ is a direct summand of  $\cL^2$, we write $\epsilon: \fL^2\to \cL^2$ for the natural inclusion.

We choose a point  $x\in {\Igusa}'(\Witt)$, lying above $x_0$, $(x^i)_i=x$.
By Proposition \ref{propbeta}, associated with the point $x$ on the Igusa tower, we have isomorphisms 
\[{\beta}_x:  {\Sordprime}^\wedge_{\bar{x}}\isomto{\hat{\bG}}_m\otimes \fL^2 \text{ and } \beta_{\Theta(x)}:{\Sord}^\wedge_{\theta(\bar{x}_0)}\isomto{\hat{\bG}}_m\otimes \cL^2 .\]
For all $i=1,\dots ,s$, we may also consider the isomorphism $\beta_{x^i}:(\cS^{\rm ord}_i)^\wedge_{\bar{x}_0^i}\isomto{\hat{\bG}}_m\otimes \fL_i^2$. 
Then, ${\beta}_{x}=\oplus_{i=1}^s (\beta_{x^i})_{i=1,\dots, s}$.  As before, we denote respectively by ${\beta}^*_x$ and $\beta^*_{\Theta(x)}$ the corresponding ring homomorphisms.

\begin{prop}\label{above} The notation is the same as above.
 The map $\beta_{\Theta(x)}\circ \theta_{\bar{x}_0} \circ {\beta}_x^{-1} $ agrees with the inclusion ${\mathbb I\otimes\epsilon}:{\hat{\bG}}_m\otimes \fL^2\to {\hat{\bG}}_m\otimes \cL^2$.
\end{prop}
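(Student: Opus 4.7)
The plan is to verify the identity by unwinding all three ingredients: the definition of $\beta_x$ and $\beta_{\Theta(x)}$ from Proposition \ref{propbeta}, the description of $\theta_{\bar{x}_0}$ in terms of Serre--Tate coordinates from Proposition \ref{STinclusion}, and the construction of the Igusa structure on the product abelian variety via $\Theta$.

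The first step is to record the crucial compatibility between the Igusa structures: by the very construction of $\Theta$, the Igusa structure of infinite level on $A = \prod_i A_i$ associated with $\Theta(x)$ is the direct sum $\iota_{\Theta(x)} = \bigoplus_{i=1}^s \iota_{x^i}$. Dualizing, the trivialization $T_p(\iota_{\Theta(x)}^\vee):T_pA(\fpb)\isomto \cL^\vee$ decomposes as $\bigoplus_i T_p(\iota_{x^i}^\vee)$ with respect to $T_pA(\fpb)=\bigoplus_i T_pA_i(\fpb)$ and $\cL=\bigoplus_i\fL_i$. Combining with the prime decomposition $\cO_{K,p}=\prod_j(\cO_{K_{\fP_j}}\times\cO_{K_{\fP_j^c}})$, this yields canonical identifications $T_{\fP_j}A_i(\fpb)\simeq (\fL_{i,j}^+)^\vee$ and $T_{\fP_j^c}A_i(\fpb)\simeq (\fL_{i,j}^-)^\vee$, where $\fL_{i,j}^\pm$ denotes the $\fP_j$-component of $\fL_i^\pm$, and similarly $T_{\fP_j}A(\fpb)\simeq(\cL_j^+)^\vee$, $T_{\fP_j^c}A(\fpb)\simeq(\cL_j^-)^\vee$.

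The second step is to transport the description of $\theta_{\bar x_0}$ from Proposition \ref{STinclusion} through these trivializations. That proposition identifies $\theta_{\bar x_0}$, under the Serre--Tate isomorphisms, with the block-diagonal inclusion
\[
\bigoplus_{j=1}^r\bigoplus_{i=1}^s \Hom_{\zz_p}(T_{\fP_j}A_i(\fpb)\otimes T_{\fP_j^c}A_i(\fpb),\hat{\bG}_m)\hookrightarrow \bigoplus_{j=1}^r \Hom_{\zz_p}(T_{\fP_j}A(\fpb)\otimes T_{\fP_j^c}A(\fpb),\hat{\bG}_m).
\]
Applying the identifications from the first step, the source becomes $\bigoplus_{i,j}\hat{\bG}_m\otimes\fL_{i,j}^+\otimes\fL_{i,j}^- = \hat{\bG}_m\otimes\fL^2$ and the target becomes $\bigoplus_j \hat{\bG}_m\otimes\cL_j^+\otimes\cL_j^- = \hat{\bG}_m\otimes\cL^2$, while the inclusion itself is induced by the natural inclusions $\fL_{i,j}^\pm\hookrightarrow\cL_j^\pm$ on each factor; this is precisely the description of $\epsilon:\fL^2\hookrightarrow\cL^2$, so the composition equals $\mathbb{I}\otimes\epsilon$.

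The main ``obstacle'' is really only careful bookkeeping across the three simultaneous decompositions involved: the prime decomposition over $\cO_{K,p}$ (indices $j$ and the $c$-conjugate), the partition index $i$ coming from $G'\hookrightarrow GU$, and the $\pm$ splitting coming from $p=w\cdot w^c$. Once the compatibility $\iota_{\Theta(x)}=\bigoplus_i\iota_{x^i}$ is recorded, the diagonalization of both $\beta_x$ and $\beta_{\Theta(x)}$ along the partition is immediate, and the identification of the transported $\theta_{\bar x_0}$ with $\mathbb{I}\otimes\epsilon$ requires no further input beyond Proposition \ref{STinclusion}.
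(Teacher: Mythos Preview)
Your proof is correct and follows exactly the same approach as the paper, which simply states that the result follows from Propositions \ref{STinclusion} and \ref{propbeta} combined. You have unpacked this one-line justification by making explicit the compatibility $\iota_{\Theta(x)}=\bigoplus_i\iota_{x^i}$ and tracking how the trivializations transport the block-diagonal inclusion of Proposition \ref{STinclusion} to $\mathbb{I}\otimes\epsilon$; this is precisely the bookkeeping the paper leaves to the reader.
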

\begin{proof}
The statement follows from Proposition \ref{STinclusion} and Proposition \ref{propbeta} combined.
\end{proof}

Equivalently, in terms of the local \ST coordinates, Proposition \ref{above} states that the two homomorphisms \[\theta_{x_0}^*:\Ring_{\Sord,\theta(x_0)}^\wedge\rightarrow \Ring_{\Sordprime,x_0}^\wedge \text{ and }{\mathbb I}\otimes \epsilon^{\vee}: \Witt[[t]]\otimes (\cL^{2})^\vee \surjects \Witt[[t]]\otimes (\fL^{2})^\vee\]
satisfy the equality ${\mathbb I}\otimes \epsilon^{\vee}={{\beta}^{*}_x }^{-1}\circ\theta^*_{x_0}\circ \beta^*_{\Theta(x)}$. 

\begin{cor}\label{coro}
The notation is the same as above. For any $f\in V$, we have  \[(\Theta^*f)_x (t)=(\mathbb I\otimes \epsilon^{\vee})(f_{\Theta(x)}(t)).\]
\end{cor}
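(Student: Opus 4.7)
The plan is to carry out a direct computation, unwinding the definition of the $t$-expansion and reducing the identity to a purely diagrammatic statement whose only nontrivial content is Proposition~\ref{above}. The first step will be to verify that localization intertwines pullback along the Igusa-tower map $\Theta$ with pullback along $\theta$, in the sense that for every $f\in V$,
\[
\loc_x(\Theta^*f) \;=\; \theta^*_{x_0}\bigl(\loc_{\Theta(x)}(f)\bigr) \qquad\text{in }\Ring_{\Sordprime,x_0}.
\]
Unwinding the definition $\loc_x = (j^{'*}_x)^{-1}$ composed with taking germs at $x$ (and its analogue for $\loc_{\Theta(x)}$), this reduces to the commutativity of the square
\[
\begin{CD}
\cO^\wedge_{\Sord,\theta(\bar x_0)} @>{j^*_{\Theta(x)}}>> \cO^\wedge_{\Igusa,\Theta(x)} \\
@V{\theta^*_{x_0}}VV @VV{\Theta^*}V \\
\cO^\wedge_{\Sordprime,\bar x_0} @>{j^{'*}_{x}}>> \cO^\wedge_{\Igusa',x}
\end{CD}
\]
which is immediate from the factorization $j\circ\Theta=\theta\circ j'$ of canonical finite \'etale covers of the Shimura varieties by their Igusa towers.

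Next, I will apply $(\beta^*_x)^{-1}$ to both sides of this displayed identity. By the definition of the $t$-expansion on $V'$, the left-hand side becomes $(\Theta^*f)_x(t)$; and substituting $\loc_{\Theta(x)}(f)=\beta^*_{\Theta(x)}\bigl(f_{\Theta(x)}(t)\bigr)$ on the right-hand side yields
\[
(\Theta^*f)_x(t) \;=\; \bigl((\beta^*_x)^{-1}\circ\theta^*_{x_0}\circ\beta^*_{\Theta(x)}\bigr)\bigl(f_{\Theta(x)}(t)\bigr).
\]
Dualizing Proposition~\ref{above} from formal schemes to complete local rings, the composition in parentheses equals $\mathbb I\otimes\epsilon^\vee$, and the identity collapses to the assertion of the corollary.

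The whole argument is thus a two-square diagram chase, with every piece of genuine geometric input already absorbed into Proposition~\ref{above} via Serre--Tate theory. I do not expect any real obstacle here; the only thing to watch is that the three $\beta$'s appearing in the calculation refer to the correct points and lattices (the small-Shimura-variety $\beta_x$ targets $\hat{\bG}_m\otimes\fL^2$, while $\beta_{\Theta(x)}$ targets $\hat{\bG}_m\otimes\cL^2$, and the map between these is precisely $\mathbb I\otimes\epsilon$).
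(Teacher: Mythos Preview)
Your proposal is correct and follows essentially the same approach as the paper: both arguments unwind the definition of the $t$-expansion, use the compatibility $j\circ\Theta=\theta\circ j$ to pass localization through pullback, and then invoke Proposition~\ref{above} (in its dual form ${\mathbb I}\otimes \epsilon^{\vee}={\beta^{*}_x}^{-1}\circ\theta^*_{x_0}\circ \beta^*_{\Theta(x)}$ on complete local rings) to identify the resulting composition with $\mathbb I\otimes\epsilon^\vee$. Your presentation via the commutative square is slightly more structured, but the logical content is identical.
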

\begin{proof}
By definition $(\Theta^*f)_x(t)={\beta^*_x}^{-1}\circ {j^*_x}^{-1}((\Theta^* f)_x)$, and $f_{\Theta(x)}(t)= {\beta^*_{\Theta(x)}}^{-1}\circ {j^*_{\Theta(x)}}^{-1}(f_{\Theta(x)})$. Also by definition,  $j\circ \Theta=\theta\circ j$ and  $x_0=j(x)$. Then, for all $f\in V$,
\[
(\Theta^*f)_x(t)={\beta^*_x}^{-1}\circ {j^*_x}^{-1}((\Theta^* f)_x)= {\beta^*_x}^{-1}\circ {j^*_x}^{-1}\circ \Theta_x^* (f_{\Theta(x)})= 
{\beta^*_x}^{-1}\circ \theta_{x_0}^*\circ { j^*_{\Theta(x)}}^{-1} (f_{\Theta(x)}),  \]
and 
\[(\mathbb I\otimes \epsilon^{\vee})(f_{\Theta(x)}(t))=
(1\otimes \epsilon^\vee)\circ {\beta^*_{\Theta(x)}}^{-1}\circ {j^*_{\Theta(x)}}^{-1}(f_{\Theta(x)}).
\]
Thus, the equality ${\mathbb I}\otimes \epsilon^{\vee}\circ {\beta^*_{\Theta(x)}}^{-1}={{\beta}^{*}_x }^{-1}\circ\theta^*_{x_0}$ (following Proposition \ref{above}) suffices to conclude. 
\end{proof}

 We observe that the statement in Corollary \ref{coro} is equivariant for the action of $\Levin'(\zz_p)$. More precisely, the following equalities hold.

\begin{lemma}\label{Leviaction4}  
For any $g\in\Levin'(\zz_p)\subset \Levin(\zz_p)$, $x\in {\Igusa}'(\Witt)$, and  $f\in V$, we have 
\[(\Theta^*f)_{x^g}(t)=({\mathbb I}\otimes g^{-1})(\Theta^*(g\cdot f))_{x}(t) \text{ and }
({\mathbb I}\otimes \epsilon^{\vee})(f_{\Theta(x^g)}(t))=({\mathbb I}\otimes \epsilon^{\vee}\circ g^{-1})((g\cdot f)_{\Theta(x)}(t)).\]
\end{lemma}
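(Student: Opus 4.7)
The plan is to reduce both equalities to Proposition~\ref{Leviaction3} (applied in each of the two Igusa towers) together with one equivariance statement for the morphism $\Theta$: namely, that for every $g\in \Levin'(\zz_p)\subset \Levin(\zz_p)$ and every $x\in \Igusa'(\Witt)$ one has $\Theta(x^g)=\Theta(x)^g$. This identity is the heart of the lemma, and everything else is essentially formal.

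First I would verify the equivariance $\Theta(x^g)=\Theta(x)^g$ for $g\in \Levin'(\zz_p)$. By construction $\Theta$ is characterized by sending the split Igusa structure $\oplus_i\iota_i\colon\mu_{p^\infty}\otimes\cL\hookrightarrow \prod_i A_i[p^\infty]$ on $\Sordprime$ to the corresponding Igusa structure on $\Sord$, and the action of $g\in\Levin(\zz_p)$ on a point of $\Igusa$ is by precomposition with $1\otimes g$ on $\mu_{p^\infty}\otimes\cL$. Since $g\in \Levin'(\zz_p)$ preserves the $\cO_{K,p}$-decomposition $\cL=\oplus_i \fL_i$ (this is exactly the definition of $\Levin'\subset \Levin$), the precomposition with $1\otimes g$ commutes with taking the direct sum of the $\iota_i$'s, which gives the claimed equivariance. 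The formal-scheme-level statement $\Theta(x^g)=\Theta(x)^g$ follows. Passing to pullbacks on global sections of the structure sheaf, this yields the functional identity $g\cdot(\Theta^*f)=\Theta^*(g\cdot f)$ for all $f\in V$ and $g\in \Levin'(\zz_p)$.

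For the first equality, apply Proposition~\ref{Leviaction3} in the tower $\Igusa'/\Sordprime$ to the $p$-adic automorphic form $\Theta^*f\in V'$ and the element $g\in \Levin'(\zz_p)$:
\[
(\Theta^*f)_{x^g}(t)=({\mathbb I}\otimes g^{-1})\bigl(g\cdot(\Theta^*f)\bigr)_x(t).
\]
Substituting $g\cdot(\Theta^*f)=\Theta^*(g\cdot f)$ from the equivariance step yields the first equality. For the second equality, apply Proposition~\ref{Leviaction3} in the tower $\Igusa/\Sord$ to $f\in V$ at the point $\Theta(x)\in\Igusa(\Witt)$, with element $g\in\Levin(\zz_p)$:
\[
f_{\Theta(x)^g}(t)=({\mathbb I}\otimes g^{-1})(g\cdot f)_{\Theta(x)}(t).
\]
Replace $\Theta(x)^g$ by $\Theta(x^g)$ on the left, using the equivariance, and then apply $\mathbb{I}\otimes \epsilon^{\vee}$ to both sides; composition on the right gives $\mathbb{I}\otimes(\epsilon^{\vee}\circ g^{-1})$, which is the second equality.

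The only real content is the equivariance $\Theta(x^g)=\Theta(x)^g$ for $g\in\Levin'(\zz_p)$, i.e.\ the check that the embedding of Igusa towers intertwines the two $\Levin'(\zz_p)$-actions. Once that is in hand, both statements are formal consequences of Proposition~\ref{Leviaction3}, and no delicate computation involving $\hat{\bG}_m\otimes \cL^2$ or Serre--Tate coordinates is needed beyond what has already been established.
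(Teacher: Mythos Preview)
Your proposal is correct and follows essentially the same approach as the paper: the paper's proof also isolates the equivariances $\Theta\circ g=g\circ\Theta$ (hence $\Theta(x^g)=\Theta(x)^g$ and $\Theta^*(g\cdot f)=g\cdot\Theta^*f$) for $g\in\Levin'(\zz_p)$, and then simply invokes Proposition~\ref{Leviaction3}. Your justification of the equivariance via the fact that $g\in\Levin'(\zz_p)$ preserves the decomposition $\cL=\oplus_i\fL_i$ is exactly the right reason; the paper additionally records $\epsilon\circ g=g\circ\epsilon$, but as your derivation shows this is not strictly needed once one writes the second equality as applying $\mathbb{I}\otimes\epsilon^\vee$ to both sides of the identity from Proposition~\ref{Leviaction3}.
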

\begin{proof}
Recall that $\epsilon\circ g=g\circ \epsilon$ and $\Theta\circ g=g\circ \Theta$,  for all $g\in\Levin'(\zz_p)\subset \Levin(\zz_p)$. Thus,  for any $x\in{\Igusa}'(\Witt)$,  the point $x^g$ is another point of ${\Igusa}'$ satisfying $\Theta(x)^g=\Theta(x^g)$, and for all $f\in V$,  we have $\Theta^*(g\cdot f)=g\cdot \Theta^*f$.  The statement then  follows immediately from Proposition \ref{Leviaction3}.
\end{proof}

By the $t$-expansion principle, we deduce the following vanishing criteria.

\begin{corollary}\label{STrestriction} 
Let $x\in {\Igusa}'(\Witt)$, $f\in H^0(\Sord,\cE_\kappa)$, for any $\kappa$. 

The $\kappa$-restriction  ${\rm res}_\kappa (f)$ of $f$ to the subgroup $G'$ vanishes if and only if \[({\mathbb I}\otimes \epsilon^{\vee})(\Psi_\kappa(f)_{\Theta(x)}(t))=0.\]
\end{corollary}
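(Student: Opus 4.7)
The plan is to assemble the statement as a straightforward chain of equivalences from the tools already in place, with no new ideas required. The content of the corollary is essentially that the three arrows ``classical $\Rightarrow$ $p$-adic'', ``restriction on Shimura varieties = pullback along $\Theta$'', and ``vanishing = vanishing of the $t$-expansion'' have all been set up compatibly, so one just needs to string them together at a chosen lift $x \in \mathrm{Ig}'(\Witt)$ of $\theta(x_0)$.

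First I would observe that, since $\Psi'_\kappa : H^0(\Sordprime,\cE'_\kappa) \hookrightarrow (V')^{N'}[\kappa]$ is injective by Proposition~\ref{ten} (applied to the datum $(G',X')$ in place of $(GU,X)$), the condition ${\rm res}_\kappa(f) = 0$ is equivalent to $\Psi'_\kappa({\rm res}_\kappa(f)) = 0$. Next, Proposition~\ref{propres} (with $\kappa' = \kappa$) identifies
\[
\Psi'_\kappa({\rm res}_\kappa(f)) = \Theta^*\bigl(\Psi_\kappa(f)\bigr) \in (V')^{N'}[\kappa],
\]
so the vanishing in question is equivalent to the vanishing of $\Theta^*(\Psi_\kappa(f))$.

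Now I apply the $t$-expansion principle (Theorem~\ref{STexpprinciple-thm}) to the element $\Theta^*(\Psi_\kappa(f)) \in (V')^{N'}[\kappa]$ at the chosen point $x \in \mathrm{Ig}'(\Witt)$: its vanishing is equivalent to the vanishing of $(\Theta^*\Psi_\kappa(f))_x(t) \in \Witt[[t]]\otimes (\fL^2)^\vee$. Finally, Corollary~\ref{coro}, applied to $\Psi_\kappa(f) \in V$, gives the explicit formula
\[
(\Theta^*\Psi_\kappa(f))_x(t) = ({\mathbb I}\otimes \epsilon^\vee)\bigl(\Psi_\kappa(f)_{\Theta(x)}(t)\bigr),
\]
so the vanishing of the left-hand side is the vanishing of $({\mathbb I}\otimes \epsilon^\vee)(\Psi_\kappa(f)_{\Theta(x)}(t))$, which is precisely the condition in the statement.

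Since every step above is a biconditional (injectivity of $\Psi'_\kappa$; the identity of Proposition~\ref{propres}; the $t$-expansion principle; and the formula of Corollary~\ref{coro}), the chain of equivalences gives the corollary directly. There is no real obstacle: the genuine content has been absorbed into Proposition~\ref{propres} (compatibility of restriction on classical and $p$-adic automorphic forms) and Corollary~\ref{coro} (compatibility of $\Theta^*$ with $t$-expansions, via the fact that $\theta_{x_0}^*$ corresponds under the Serre-Tate coordinates to ${\mathbb I}\otimes \epsilon^\vee$). The only minor point to note is that $\kappa$ is regarded as a dominant weight of $T = T'$, so that $\Psi'_\kappa$ and ${\rm res}_\kappa$ are both defined and Proposition~\ref{propres} (rather than the more general Proposition~\ref{propres2}) suffices.
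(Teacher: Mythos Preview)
Your proof is correct and follows essentially the same route as the paper's: combine the injectivity of $\Psi'_\kappa$ with the $t$-expansion principle (Theorem~\ref{STexpprinciple-thm}) on the $G'$-side, then use Proposition~\ref{propres} and Corollary~\ref{coro} to rewrite $\Psi'_\kappa({\rm res}_\kappa(f))_x(t)$ as $({\mathbb I}\otimes\epsilon^\vee)(\Psi_\kappa(f)_{\Theta(x)}(t))$. The only difference is cosmetic: the paper folds your first and third steps into a single invocation of Theorem~\ref{STexpprinciple-thm}.
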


\begin{proof}
By Theorem \ref{STexpprinciple-thm},  ${\rm res}_\kappa (f)$ vanishes if and only if $\Psi'_\kappa ({\rm res}_\kappa (f))_x (t)$ vanishes.  On the other hand,  Proposition \ref{propres} and Corollary \ref{coro} combined imply 
\[\Psi'_\kappa ({\rm res}_\kappa (f))_x (t)=\Theta^*(\Psi_\kappa(f))_x(t)=({\mathbb I}\otimes\epsilon^\vee)( \Psi_\kappa(f)_{\Theta(x)} (t)).\]
\end{proof}

\begin{corollary}\label{STrestriction2}
Let $x\in {\Igusa}'(\Witt)$, 
$f\in H^0(\Sord,\cE_\kappa)$, for any $\kappa$. 
Let $\kappa'\neq \kappa$ be a dominant weight of $T'$.   

Assume $\kappa'=\kappa^\sigma$, for some $\sigma \in W_\Levin (T)$, and choose $g_\sigma\in N_\Levin(T)(\zz_p)$ lifting $\sigma$.

The $(\kappa,\kappa') $-restriction  ${\rm res}_{\kappa,\kappa'} (f)$ of $f$ to the subgroup $G'$ vanishes if and only if \[({\mathbb I}\otimes (\epsilon^{\vee}\circ g_\sigma))(\Psi_\kappa(f)_{\Theta(x)^{g_\sigma}}(t))=0.\]
\end{corollary}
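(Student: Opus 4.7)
The plan is to mimic the proof of Corollary \ref{STrestriction}, but replace the input Proposition \ref{propres} with its twisted version Proposition \ref{propres2}, keeping careful track of how the Weyl group element $g_\sigma$ moves between the ``ambient'' form $\Psi_\kappa(f)$ and the base point on the Igusa tower. By construction, $\Psi'_{\kappa'}(\mathrm{res}_{\kappa,\kappa'}(f))$ is an element of $V'^{N'}[\kappa']$, so the $t$-expansion principle (Theorem \ref{STexpprinciple-thm}) tells us that $\mathrm{res}_{\kappa,\kappa'}(f) = 0$ if and only if $\Psi'_{\kappa'}(\mathrm{res}_{\kappa,\kappa'}(f))_x(t) = 0$. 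Thus the entire argument reduces to rewriting the latter $t$-expansion in terms of the $t$-expansion of $\Psi_\kappa(f)$ at $\Theta(x)^{g_\sigma}$.

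First I would invoke Proposition \ref{propres2} to obtain the identity
\[
\Psi'_{\kappa'}(\mathrm{res}_{\kappa,\kappa'}(f)) \;=\; \Theta^{*}\bigl(g_\sigma \cdot \Psi_\kappa(f)\bigr),
\]
so that the $t$-expansion at $x$ becomes $\bigl(\Theta^{*}(g_\sigma \cdot \Psi_\kappa(f))\bigr)_x(t)$. Next I would apply Corollary \ref{coro} with $h = g_\sigma \cdot \Psi_\kappa(f)$ to convert this into
\[
(\mathbb{I} \otimes \epsilon^{\vee})\bigl((g_\sigma \cdot \Psi_\kappa(f))_{\Theta(x)}(t)\bigr).
\]
Finally, I would use Proposition \ref{Leviaction3} applied at the base point $\Theta(x) \in \Igusa(\Witt)$ with the element $g_\sigma \in \Levin(\zz_p)$, which gives
\[
\Psi_\kappa(f)_{\Theta(x)^{g_\sigma}}(t) \;=\; (\mathbb{I} \otimes g_\sigma^{-1})\bigl((g_\sigma \cdot \Psi_\kappa(f))_{\Theta(x)}(t)\bigr),
\]
equivalently $(g_\sigma \cdot \Psi_\kappa(f))_{\Theta(x)}(t) = (\mathbb{I} \otimes g_\sigma)\,\Psi_\kappa(f)_{\Theta(x)^{g_\sigma}}(t)$. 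Composing the two $\Levin(\zz_p)$-actions yields
\[
\Psi'_{\kappa'}(\mathrm{res}_{\kappa,\kappa'}(f))_x(t) \;=\; \bigl(\mathbb{I} \otimes (\epsilon^{\vee} \circ g_\sigma)\bigr)\,\Psi_\kappa(f)_{\Theta(x)^{g_\sigma}}(t),
\]
and the equivalence stated in the corollary follows immediately.

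I do not expect any serious obstacle: the proof is a purely formal unwinding once Proposition \ref{propres2} is available, and it depends only on the compatibility of the $\Levin(\zz_p)$-action on the Igusa tower with $t$-expansions (Proposition \ref{Leviaction3}) and on Hida's irreducibility result underlying Theorem \ref{STexpprinciple-thm}. The only point requiring attention is the bookkeeping of whether $g_\sigma$ acts on the form or moves the base point; the identity $\Theta(x)^{g_\sigma}$ is well-defined in $\Igusa(\Witt)$ precisely because $g_\sigma \in \Levin(\zz_p) \supset \Levin'(\zz_p)$, but $g_\sigma$ need not lie in $\Levin'(\zz_p)$, which is exactly why the shifted base point $\Theta(x)^{g_\sigma}$ (rather than a point in the image of $\Theta$) appears in the statement.
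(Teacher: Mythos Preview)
Your proposal is correct and follows essentially the same route as the paper's proof: reduce to the vanishing of $\Psi'_{\kappa'}(\mathrm{res}_{\kappa,\kappa'}(f))_x(t)$ via Theorem~\ref{STexpprinciple-thm}, rewrite it using Proposition~\ref{propres2} and Corollary~\ref{coro} as $(\mathbb{I}\otimes\epsilon^\vee)\bigl((g_\sigma\cdot\Psi_\kappa(f))_{\Theta(x)}(t)\bigr)$, and then apply Proposition~\ref{Leviaction3} to shift $g_\sigma$ from the form to the base point. Your closing remark about $g_\sigma\notin\Levin'(\zz_p)$ also matches the paper's comment immediately following the corollary.
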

\begin{proof}
Theorem \ref{STexpprinciple-thm} implies that ${\rm res}_{\kappa,\kappa'} (f)$ vanishes if and only if $\Psi'_{\kappa'} ( {\rm res}_{\kappa,\kappa'}(f))_x(t )$ vanishes.
By combining Proposition \ref{propres2} and Corollary \ref{coro}, we have
\[\Psi'_{\kappa'} ( {\rm res}_{\kappa,\kappa'}(f))_x(t )= \Theta^*(g_\sigma \cdot \Psi_\kappa (f))_x(t)=({\mathbb I}\times \epsilon^{\vee})((g_\sigma\cdot \Psi_\kappa (f))_{\Theta(x)}(t)).\]
Finally, Proposition \ref{Leviaction3}
implies
\[(g_\sigma\cdot \Psi_\kappa (f))_{\Theta(x)}(t)
=({\mathbb I}\otimes  g_\sigma)(\Psi_\kappa (f)_{\Theta(x)^{g_\sigma}}(t)).\]
\end{proof}
Note that, in the above corollary, since $\kappa'\neq \kappa$, $g_\sigma\notin \Levin'(\Witt)$ and the point $\Theta(x)^{g_\sigma}$ is not in the image of $\Theta$,  for any $x\in{\Igusa}'(\Witt)$.

\begin{corollary}\label{generalrest} Let $x\in {\Igusa}'(\Witt)$.
Let $f$ be a global function on the Igusa tower $\{\Igusa_n\}{_{n\geq 0}}$, i.e. $f\in V$. The restriction $\Theta^*f$ of $f$ to the subgroup $G'$ vanishes if and only if 
$({\mathbb I}\otimes \epsilon^{\vee})((g\cdot f)_{\Theta(x)}(t))=0$, or equivalently 
$({\mathbb I}\otimes \epsilon^{\vee})(f_{\Theta(x)^g}(t))=0$, for all $g\in T(\zz_p)=T'(\zz_p)$.
\end{corollary}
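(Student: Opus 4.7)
The plan is to deduce this general (weight-free) vanishing criterion by combining the ``general $t$-expansion'' statement (Proposition \ref{generalt}) applied on the smaller Igusa tower ${\Igusa}'$ with the already-established compatibility of restriction and $t$-expansion (Corollary \ref{coro}), using the equivariance given by Lemma \ref{Leviaction4}. Since $f$ is no longer assumed of pure weight, the $t$-expansion principle alone does not detect vanishing; we must twist by the torus action on $V'$ and then pull the twist through $\Theta^{*}$.

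First, I would apply Proposition \ref{generalt} (the unrestricted $t$-expansion criterion) to the global function $\Theta^{*}f \in V'$: the function $\Theta^{*}f$ vanishes if and only if $(g \cdot \Theta^{*}f)_{x}(t) = 0$ for every $g \in T'(\zz_p)$. Next, since $T'(\zz_p) = T(\zz_p) \subset \Levin'(\zz_p)$, the map $\Theta^{*}$ is equivariant for the action of $g$, so that $g \cdot \Theta^{*}f = \Theta^{*}(g \cdot f)$; this is exactly the first equality in Lemma \ref{Leviaction4}. Now Corollary \ref{coro}, applied to the element $g \cdot f \in V$, gives
\[
(\Theta^{*}(g\cdot f))_{x}(t) \;=\; ({\mathbb I}\otimes \epsilon^{\vee})\bigl((g\cdot f)_{\Theta(x)}(t)\bigr),
\]
which yields the first form of the criterion.

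For the equivalent reformulation in terms of the points $\Theta(x)^{g}$, I would invoke Proposition \ref{Leviaction3} on the Igusa tower $\Igusa$ above $\Sord$:
\[
(g\cdot f)_{\Theta(x)}(t) \;=\; ({\mathbb I}\otimes g)\bigl(f_{\Theta(x)^{g}}(t)\bigr),
\]
so that
\[
({\mathbb I}\otimes \epsilon^{\vee})\bigl((g\cdot f)_{\Theta(x)}(t)\bigr) \;=\; \bigl({\mathbb I}\otimes (\epsilon^{\vee}\circ g)\bigr)\bigl(f_{\Theta(x)^{g}}(t)\bigr).
\]
Here $g \in T(\zz_p)$ preserves the direct-sum decomposition $\cL = \bigoplus_{i} \fL_{i}$, hence preserves $\fL^{2} \subset \cL^{2}$, so one has $\epsilon^{\vee}\circ g = g|_{(\fL^{2})^{\vee}} \circ \epsilon^{\vee}$ with $g|_{(\fL^{2})^{\vee}}$ an automorphism of $\Witt[[t]]\otimes (\fL^{2})^{\vee}$. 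Consequently the vanishing of $({\mathbb I}\otimes \epsilon^{\vee}\circ g)(f_{\Theta(x)^{g}}(t))$ is equivalent to the vanishing of $({\mathbb I}\otimes \epsilon^{\vee})(f_{\Theta(x)^{g}}(t))$, giving the second asserted form.

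There is no real obstacle here beyond book-keeping: the content is concentrated in the earlier propositions (notably \ref{generalt}, \ref{Leviaction3}, \ref{coro}, and \ref{Leviaction4}). The only point needing care is the observation that $T(\zz_p)$ stabilizes the summand $\fL^{2}$ and acts invertibly on $(\fL^{2})^{\vee}$, which is what converts the twisted criterion $({\mathbb I}\otimes (\epsilon^{\vee}\circ g))(\cdots)=0$ into the cleaner untwisted one $({\mathbb I}\otimes \epsilon^{\vee})(\cdots)=0$ and makes the two formulations equivalent.
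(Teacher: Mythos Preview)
Your proof is correct and follows essentially the same route as the paper: both start from Proposition~\ref{generalt} applied to $\Theta^*f\in V'$, then use the $\Levin'$-equivariance of $\Theta^*$ together with Corollary~\ref{coro} to reach the two displayed criteria. Your final paragraph, explaining why $({\mathbb I}\otimes(\epsilon^\vee\circ g))(\cdots)=0$ is equivalent to $({\mathbb I}\otimes\epsilon^\vee)(\cdots)=0$ via the invertible action of $g\in T(\zz_p)$ on $(\fL^2)^\vee$, makes explicit a point the paper leaves to the reader.
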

\begin{proof}
By Proposition \ref{generalt}, $\Theta^*f$ vanishes if and only if $(g\cdot \Theta^*f)_x(t)$ vanish for all $g\in T(\zz_p)$.  From Lemma \ref{Leviaction4} and Corollary \ref{coro}, for all $g\in T(\zz_p)$ (recall $T(\zz_p)\subset \Levin'(\zz_p)$), we deduce
\[(g\cdot \Theta^*f)_x(t)= ({\mathbb I}\otimes g)(\Theta^*f)_{x^g} (t)= ({\mathbb I}\otimes (g\circ \epsilon^\vee))(f_{\Theta(x^g)}(t))=  ({\mathbb I}\otimes (g\circ \epsilon^\vee))(f_{\Theta(x)^g}(t))\]
On the other hand, we also have 
\[(g\cdot \Theta^*f)_x(t)=(\Theta^*(g\cdot f))_x(t)=({\mathbb I}\otimes\epsilon^\vee)(g\cdot f)_{\Theta(x)}(t).\]
\end{proof}

\begin{corollary} Let $x\in {\Igusa}'(\Witt)$.
Let $r\in \bN$, and let $f,f'$ be two $p$-adic automorphic forms on $GU$ of weights $\kappa,\kappa'$, for any $\kappa, \kappa'$, i.e. $f \in V^N[\kappa], f' \in V^N[\kappa']$. We denote by ${\rm res}_\kappa(f) $ and ${\rm res}_{\kappa'}(f')$ their restrictions to  the subgroup $G'$. Then ${\rm res}_\kappa(f) \equiv {\rm res}_{\kappa'}(f')\mod p^r$ if and only if 
\[\kappa(g)({\mathbb I}\otimes \epsilon^{\vee})(f_{\Theta(x)}(t))\equiv \kappa' (g)({\mathbb I}\otimes \epsilon^{\vee})(f'_{\Theta(x)}(t))\, \mod p^r,\]
for all $g\in T(\zz_p).$
\end{corollary}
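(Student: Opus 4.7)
The corollary is obtained by composing two results from earlier in this section: the congruence form of the \ST expansion principle (Corollary~\ref{cong}), applied on the smaller Shimura datum, together with the restriction formula in \ST coordinates (Corollary~\ref{coro}).

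First, I would set ${\rm res}_\kappa(f)=\Theta^*f\in {V'}^{N'}[\kappa]$ and ${\rm res}_{\kappa'}(f')=\Theta^*f'\in {V'}^{N'}[\kappa']$, using the fact that $\Theta^*$ preserves $N$-invariance and weight eigenspaces under the identification $T'=T$, $N'=N\cap\Levin'(\zz_p)$. Since both $\Theta^*f$ and $\Theta^*f'$ live in ${V'}$, the congruence ${\rm res}_\kappa(f)\equiv{\rm res}_{\kappa'}(f')\bmod p^r$ is a well-defined condition in ${V'}$.

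Second, I would apply the exact analogue of Corollary~\ref{cong} in the setting of the Igusa tower ${\Igusa}'/\Sordprime$ at the point $x\in{\Igusa}'(\Witt)$. The proof of Corollary~\ref{cong} is formal: it uses only Proposition~\ref{generalt} applied to the difference of two weight-eigenvectors, together with the fact that the torus acts via the respective weights. Since Proposition~\ref{generalt} holds verbatim on ${\Igusa}'$ (irreducibility of the $SU$-component for the smaller datum being supplied by Hida's theorem applied to $G'$), we deduce
\[
\Theta^*f\equiv \Theta^*f'\ (\bmod\ p^r)\ \iff\ \kappa(g)(\Theta^*f)_x(t)\equiv \kappa'(g)(\Theta^*f')_x(t)\ (\bmod\ p^r)
\]
for all $g\in T'(\zz_p)=T(\zz_p)$.

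Third, I would substitute the restriction formula of Corollary~\ref{coro}, namely
\[
(\Theta^*f)_x(t) \;=\; ({\mathbb I}\otimes\epsilon^{\vee})\bigl(f_{\Theta(x)}(t)\bigr),
\]
and the analogous identity for $f'$, into the congruence from the previous step. This turns the right-hand side into
\[
\kappa(g)\,({\mathbb I}\otimes\epsilon^{\vee})\bigl(f_{\Theta(x)}(t)\bigr)\;\equiv\;\kappa'(g)\,({\mathbb I}\otimes\epsilon^{\vee})\bigl(f'_{\Theta(x)}(t)\bigr)\ (\bmod\ p^r),
\]
for all $g\in T(\zz_p)$, which is precisely the claim.

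There is no genuine obstacle: once the statement is correctly interpreted inside ${V'}$, it is a formal consequence of Corollaries~\ref{cong} and~\ref{coro}. The only point meriting a line of justification is the transfer of Corollary~\ref{cong} from the $GU$-setting to the $G'$-setting, which is immediate since its proof only invokes Proposition~\ref{generalt} and the $T$-eigenvector property, both of which are intrinsic to any Igusa tower with an irreducible $SU$-component.
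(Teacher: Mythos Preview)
Your proposal is correct and essentially equivalent to the paper's argument. The paper's proof is a single line invoking Corollary~\ref{generalrest} (applied modulo $p^r$ to the difference, using $g\cdot f=\kappa(g)f$ and $g\cdot f'=\kappa'(g)f'$ for $g\in T(\zz_p)$); you instead combine Corollary~\ref{cong} on the $G'$-tower with Corollary~\ref{coro}, which is just the same ingredients assembled in a different order, since Corollary~\ref{generalrest} is itself proved from Proposition~\ref{generalt} (the content behind Corollary~\ref{cong}) together with Corollary~\ref{coro}.
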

\begin{proof}
The statement is an immediate consequence of the Corollary \ref{generalrest}.
\end{proof}

\section{Acknowledgements}
We are grateful to L. Long, R. Pries, and K. Stange for organizing the Women in Numbers 3 workshop and facilitating this collaboration.  We would like to thank the referee for carefully reading the paper and providing many helpful comments, including suggestions for how to improve the introduction.  We would also like to thank M. Harris, H. Hida, and K.-W. Lan for answering questions about $q$-expansion principles.  We are grateful to the Banff International Research Station for creating an ideal working environment. 
\bibliography{WIN3bib}

\end{document}